\documentclass[11pt]{amsart}
\usepackage{mymacros}
\usepackage{textcomp,geometry,graphicx,enumerate,fullpage}
\usepackage{mathrsfs}
\usepackage{booktabs}
\usepackage{setspace}
\usepackage{color}

\title{Semidefinite Relaxations of Products of Nonnegative Forms on the Sphere}

\author{Chenyang Yuan \and Pablo A. Parrilo}

\newtheorem{theorem}{Theorem}
\numberwithin{theorem}{section}
\newtheorem{definition}[theorem]{Definition}

\newtheorem{proposition}[theorem]{Proposition}

\newtheorem{fact}[theorem]{Fact}

\newtheorem{example}[theorem]{Example}
\newtheorem{conjecture}[theorem]{Conjecture}

\DeclareMathOperator{\rel}{rel}

\DeclareMathOperator{\srel}{\mathscr{S}}
\DeclareMathOperator{\GammaText}{Gamma}
\newcommand{\cnormal}{\mathcal{N}_{\C}}

\newcommand{\knormal}{\mathcal{N}_{\K}}
\newcommand{\cA}{\mathcal{A}}
\newcommand{\defn}{\coloneqq}
\newcommand{\eps}{\epsilon}

\begin{document}
\maketitle

\begin{abstract}
  We study the problem of maximizing the geometric mean of $d$ low-degree
  non-negative forms on the real or complex sphere in $n$ variables. We show
  that this highly non-convex problem is NP-hard even when the forms are
  quadratic and is equivalent to optimizing a homogeneous polynomial of degree
  $O(d)$ on the sphere. The standard Sum-of-Squares based convex relaxation for
  this polynomial optimization problem requires solving a semidefinite program
  (SDP) of size $n^{O(d)}$, with multiplicative approximation guarantees of
  $\Omega(\frac{1}{n})$. We exploit the compact representation of this
  polynomial to introduce a SDP relaxation of size polynomial in $n$ and $d$,
  and prove that it achieves a \emph{constant factor} multiplicative
  approximation when maximizing the geometric mean of non-negative quadratic
  forms. We also show that this analysis is asymptotically tight, with a
  sequence of instances where the gap between the relaxation and true optimum
  approaches this constant factor as $d \rightarrow \infty$. Next we propose a
  series of intermediate relaxations of increasing complexity that interpolate
  to the full Sum-of-Squares relaxation, as well as a rounding algorithm that
  finds an approximate solution from the solution of any intermediate
  relaxation. Finally we show that this approach can be generalized for
  relaxations of products of non-negative forms of any degree.
\end{abstract}

\section{Introduction}
Sum-of-squares optimization is a powerful method of constructing hierarchies of
relaxations for polynomial optimization problems that converge to the optimal
solution at a cost of increasing computational complexity
(\cite{LasserreGlobalOptimizationPolynomials2001},
\cite{ParriloStructuredsemidefiniteprograms2000}). However, computing these
relaxations in general requires solving large instances of semidefinite programs
(SDPs), which quickly becomes computationally intractable. In particular, to
find the Sum-of-Squares decomposition of a dense degree-$d$ polynomial in $n$
variables, the input size alone is of order $n^{O(d)}$, which is exponential in
the degree.

In this paper, we introduce a series of Sum-of-Squares based algorithms to
efficiently approximate a class of dense polynomial optimization problems where
the polynomials have \emph{high degree} (where the degree is comparable to the
number of variables) but are \emph{compactly represented} (meaning that they can
be efficiently evaluated). One example of such a polynomial is the determinant
of a $n \times n$ matrix, a degree $n$ polynomial in its $n^2$ entries (thus
having exponentially many coefficients), but can be efficiently computed in
polynomial time. The class of polynomials we study in this paper is constructed
by taking the product of low-degree non-negative polynomials. For the most of
the paper, we will focus on the product of positive semidefinite (PSD) forms,
corresponding to the product of degree-2 non-negative polynomials.
\begin{definition}
  Let $\cA = (A_1, \ldots, A_d)$ where $A_i \in \K^{n\times n}$ be
  symmetric/Hermitian PSD matrices, where $\K = \R \text{ or } \C$. Then
  \begin{align*}
    p(x) = \prod_{i=1}^d \dotp{x, A_i x},
  \end{align*}
  a degree-$2d$ polynomial of $n$ variables, is a product of PSD forms.
\end{definition}
Maximizing the product of PSD forms over the sphere generalizes many different
problems in optimization, such as Kantorovich's inequality, optimizing monomials
over the sphere, linear polarization constants for Hilbert spaces, approximating
permanents of PSD matrices, portfolio optimization, and can also be interpreted
as computing the Nash social welfare for agents with polynomial utility
functions. It also has connections to bounding the relative entropy distance
between a quadratic map and its convex hull. These applications will be further
elaborated in Section \ref{sec:motivation}. We also prove in Section
\ref{sec:hardness} that this problem is NP-hard when $d = \Omega(n)$, using a
reduction to hardness of approximation of \textsc{MaxCut}. Since $d$ can be much
greater than $n$, in order to normalize for $d$ we define our objective to be
the geometric mean of quadratic forms:
\begin{align} \label{eq:opt-prod-psd}
  \textsc{Opt}(\cA) \defn \max_{x \in \K,\, \norm{x}=1} \,
  \paren{\prod_{i=1}^d \dotp{x, A_i x}}^{1/d}.
\end{align}
Sum-of-Squares optimization allow us to create a hierarchy of algorithms of
increasing complexity that give better bounds for \eqref{eq:opt-prod-psd}. In
general, if the objective is a degree $2d$ polynomial, the lowest level of the
hierarchy is a degree-$d$ Sum-of-Squares relaxation. This relaxation for
\eqref{eq:opt-prod-psd} is written as follows:
\begin{align} \label{eq:sos-relax-full}
  \textsc{OptSOS}_d(\cA) \defn
  \min\, \gamma^{1/d} \quad \text{s.t.} \quad
  \gamma \norm{x}^{2d} - \prod_{i=1}^d \dotp{x, A_i x} \text{ is a sum of squares},
\end{align}
where a polynomial $f(x)$ is a sum of squares if there exist polynomials
$s_i(x)$ so that $f(x) = \sum_i s_i(x)^2$. The constraint that a degree $d$
polynomial in $n$ variables is a sum of squares can be represented by a SDP of
size $n^{O(d)}$. Although techniques exist for reducing the size of this
representation for sparse polynomials \cite{KojimaSparsitysumssquares2005} and
polynomials with symmetry \cite{gatermann_symmetry_2004}, the polynomial $p(x)$
may not have these properties. Thus $\textsc{OptSOS}_d(\cA)$ requires solving a
SDP of size $n^{O(d)}$. However, because of the compact representation of this
polynomial, one can perhaps hope to do better. In this paper we first present a
SDP-based relaxation of $\textsc{Opt}(\cA)$ as well as a rounding algorithm for
this relaxation.
\begin{definition}[Semidefinite relaxation of \eqref{eq:opt-prod-psd}]
  \label{def:sdp-relax}
  We define $\textsc{OptSDP}(\cA)$ to be the optimum of the following SDP-based
  relaxation of \eqref{eq:opt-prod-psd}:
  \begin{align} \label{eq:dual-sdp-general}
    \textsc{OptSDP}(\cA) \defn
    \max_X
    \paren{\prod_{i=1}^d \dotp{A_i, X}}^{1/d}
    \, \mbox{ s.t. } \,
    \left\{
    \begin{array}{rl}
      X &\succeq 0\\
      \Tr(X) &= 1
    \end{array} \right. ,
  \end{align}
  where $X$ is symmetric when $\K = \R$ and Hermitian when $\K = \C$.
\end{definition}
This relaxation comes from writing $\dotp{x, A_i x} = \dotp{A_i, xx^\dagger}$ in
\eqref{eq:opt-prod-psd} and relaxing the rank-1 matrix $xx^\dagger$ to the
semidefinite variable $X$. Finding the value of this relaxation involves solving
a SDP with $O(n^2+d)$ variables and $O(n^2d)$ constraints, compared to the
Sum-of-Squares relaxation \eqref{eq:sos-relax-full} which involves solving a SDP
of size $n^{O(d)}$. The trade-off is that this relaxation is weaker than
Sum-of-Squares (Proposition \ref{prop:sos1ged}):
\begin{align*}
  \textsc{Opt}(\cA) \le \textsc{OptSOS}_d(\cA) \le \textsc{OptSDP}(\cA).
\end{align*}
Nevertheless, we show that its approximation factor is bounded by a constant,
compared to the worst case $\frac{1}{n}$ approximation factor of general
polynomial optimization algorithms
(\cite{BhattiproluWeakDecouplingPolynomial2017},
\cite{doherty_convergence_2012}). This comes from analyzing the following
rounding algorithm which produces a feasible solution to \eqref{eq:opt-prod-psd}
given an optimum solution $X^*$ to \eqref{eq:dual-sdp-general}: Sample
$y \sim \mathcal{N}_{\K}(0, X^*)$ and return $x = y/\norm{y}$, where
$\mathcal{N}_{\K}$ is a real/complex multivariate Gaussian distribution (see
Definition \ref{def:multivariate-gaussian}).  The following theorem bounds the
multiplicative approximation factor of the relaxation $\textsc{OptSDP}(\cA)$.
\begin{theorem} \label{thm:approx-factor-prod-psd}
  Suppose there is an optimal solution $X^*$ to \eqref{eq:dual-sdp-general}
  with $\rank(X^*) = r$. Let
  \begin{align*}
    L_r(\K) &= \
    \begin{cases}
       \gamma + \log 2 + \psi\pfrac{r}{2} - \log\pfrac{r}{2}  < 1.271 & \text{ if } \K = \R \\
       \gamma + \psi(r) - \log(r)  < 0.578 &\text{ if } \K = \C
    \end{cases},
  \end{align*}
  where $\psi(x) = \frac{d}{dx} \log \Gamma(x)$ is the digamma function. Then
  \begin{align*}
    e^{-L_r(\K)} \textsc{OptSDP}(\cA) \le \textsc{Opt}(\cA) \le \textsc{OptSDP}(\cA),
  \end{align*}
  which gives us a multiplicative approximation factor of $e^{-L_r(\K)}$.
\end{theorem}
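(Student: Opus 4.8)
The plan is to prove $\textsc{Opt}(\cA)\le\textsc{OptSDP}(\cA)$ by exhibiting a rank‑one feasible point, and the matching lower bound by analyzing the Gaussian rounding. For the upper bound, if $x^\star$ attains \eqref{eq:opt-prod-psd} then $X=x^\star(x^\star)^\dagger$ satisfies $X\succeq 0$, $\Tr(X)=1$, and $\dotp{A_i,X}=\dotp{x^\star,A_i x^\star}$, so its value in \eqref{eq:dual-sdp-general} equals $\textsc{Opt}(\cA)$; equivalently this is the chain $\textsc{Opt}(\cA)\le\textsc{OptSOS}_d(\cA)\le\textsc{OptSDP}(\cA)$ of Proposition~\ref{prop:sos1ged}. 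For the lower bound, take the given rank‑$r$ optimum $X^\star$ and factor $X^\star=UU^\dagger$ with $U\in\K^{n\times r}$ of full column rank (e.g.\ $U=V\Lambda^{1/2}$ from the spectral decomposition $X^\star=V\Lambda V^\dagger$). Then the rounded unit vector $x=y/\norm{y}$ with $y\sim\mathcal{N}_{\K}(0,X^\star)$ has the same law as $Ug/\norm{Ug}$ with $g\sim\mathcal{N}_{\K}(0,I_r)$, so, writing $B_i\defn U^\dagger A_i U$ and $C\defn U^\dagger U$ — both PSD in $\K^{r\times r}$ with $\Tr(B_i)=\dotp{A_i,X^\star}=:a_i$ and $\Tr(C)=1$ — one gets $\dotp{x,A_i x}=\frac{g^\dagger B_i g}{g^\dagger C g}$. (If some $a_i=0$ then $\textsc{OptSDP}(\cA)=0$ and there is nothing to prove, so assume each $a_i>0$.)

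Because $x$ is a unit vector, $\textsc{Opt}(\cA)\ge\mathbb{E}\!\left[\left(\prod_{i=1}^d\dotp{x,A_i x}\right)^{1/d}\right]$, and pulling out the scalars $a_i$ shows this expectation equals $\textsc{OptSDP}(\cA)\cdot\mathbb{E}[e^{Y}]$, where
\begin{align*}
  Y \defn \frac{1}{d}\sum_{i=1}^d \log\frac{g^\dagger B_i g}{a_i} - \log\!\left(g^\dagger C g\right).
\end{align*}
By Jensen's inequality for $\exp$ it then suffices to show $\mathbb{E}[Y]\ge -L_r(\K)$. Introducing $\phi(B)\defn\mathbb{E}_{g\sim\mathcal{N}_{\K}(0,I_r)}\!\left[\log(g^\dagger B g)\right]$ over the spectraplex $\Delta_r\defn\{B\succeq 0\text{ in }\K^{r\times r}:\Tr(B)=1\}$, and noting $B_i/a_i\in\Delta_r$ and $C\in\Delta_r$, we have $\mathbb{E}[Y]=\frac{1}{d}\sum_i\phi(B_i/a_i)-\phi(C)\ge\min_{\Delta_r}\phi-\max_{\Delta_r}\phi$, so everything reduces to the two‑sided estimate $\max_{\Delta_r}\phi-\min_{\Delta_r}\phi\le L_r(\K)$.

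To evaluate $\phi$, diagonalize $B$: $\phi(B)$ depends only on the eigenvalue vector $b=(b_1,\dots,b_r)\in\Delta_r$ and equals $\mathbb{E}\!\left[\log\sum_j b_j|h_j|^2\right]$ for i.i.d.\ standard real (resp.\ complex) scalars $h_j$, a finite, permutation‑symmetric, concave function of $b$ (the logarithmic singularity is integrable against the Gaussian density). A concave function on a simplex attains its minimum at a vertex, giving $\min\phi=\mathbb{E}[\log|h_1|^2]$; averaging $b$ over coordinate permutations and using concavity shows the maximum is at the barycenter $b=(1/r,\dots,1/r)$, giving $\max\phi=-\log r+\mathbb{E}[\log\norm{h}^2]$. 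Substituting the $\log$‑moments $\mathbb{E}[\log\norm{h}^2]=\log 2+\psi(r/2)$ and $\mathbb{E}[\log|h_1|^2]=\log 2+\psi(1/2)$ in the real case (where $\norm{h}^2\sim\chi^2_r$), and $\mathbb{E}[\log\norm{h}^2]=\psi(r)$, $\mathbb{E}[\log|h_1|^2]=\psi(1)$ in the complex case (where $\norm{h}^2\sim\GammaText(r,1)$), gives exactly $\max\phi-\min\phi=L_r(\K)$, hence $\textsc{Opt}(\cA)\ge e^{-L_r(\K)}\textsc{OptSDP}(\cA)$. The uniform bounds $L_r(\R)<1.271$ and $L_r(\C)<0.578$ follow because $\psi(x)-\log x$ increases to $0$, so $L_r$ increases in $r$ to its limit $\gamma+\log 2$ (resp.\ $\gamma$).

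The scale‑invariance bookkeeping and the Gaussian $\log$‑moment identities are routine; the step that needs care is the spectraplex estimate — checking that $\phi$ is finite and concave, that the minimum of a concave function on $\Delta_r$ is a vertex value while the symmetrized maximum is the barycenter value, and that the normalization of $\mathcal{N}_{\K}$ (Definition~\ref{def:multivariate-gaussian}) is the one for which $\norm{h}^2$ has the stated $\chi^2_r$ / $\GammaText(r,1)$ law, so the digamma values line up with $L_r(\K)$. This is also precisely where the rank dependence, and the gap between $\K=\R$ and $\K=\C$, enters: the $B_i$ and $C$ are confined to an $r$‑dimensional space, which keeps $\log(g^\dagger C g)$ from being as large as it could be in general and is what yields a constant‑factor bound in place of the generic $\Omega(1/n)$.
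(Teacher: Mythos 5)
Your proposal is correct and follows essentially the same route as the paper: the upper bound via the rank-one feasible point $x^\star (x^\star)^\dagger$, and the lower bound via factoring $X^\star=UU^\dagger$, applying Jensen's inequality to the log of the rounded objective, and then using exactly the paper's spectraplex estimate (Proposition~\ref{prop:expect-chi-square-lower-upper}: concavity and symmetry put the minimum of $\phi$ at a vertex and the maximum at the barycenter), with the same digamma computations as Fact~\ref{fact:ex-log-CN}. The only cosmetic difference is that you bound $\frac{1}{d}\sum_i\phi(B_i/a_i)-\phi(C)$ by $\min_{\Delta_r}\phi-\max_{\Delta_r}\phi$ in one step, whereas the paper bounds the two expectations term by term.
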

Since $\lim_{r \rightarrow \infty} \psi(r) - \log(r) = 0$, the approximation
factor is at least $0.2807$ when $\K = \R$ and $0.5614$ when $\K = \C$, and can
be improved if we can further bound $\rank(X^*)$.
In particular, since $L_1(\K) = 0$, the rounding algorithm recovers the exact
solution when $\rank(X^*)=1$. In section \ref{sec:sdp-relax} we explore a few
cases where this relaxation is exact, showing that the relaxation
\eqref{eq:dual-sdp-general} is able to exactly recover Kantorovich's inequality
(Example \ref{ex:kantorovich-proof}), as well as find the exact optimal solution for
optimizing any monomial over the sphere (Section \ref{sec:monomial-max}).

Using a connection to linear polarization constants (Section \ref{sec:int-gap}),
we show that there exists an asymptotically tight integrality gap instance where
the gap between $\textsc{Opt}(\cA)$ and $\textsc{OptSDP}(\cA)$ approaches the
approximation factor $e^{-L_r(\K)}$ as $n$ and $d$ approaches infinity. The
intuition is to choose $A_i = v_i v_i^\dagger$ to be rank-1, where $v_i$ are
symmetrically distributed on the sphere. Because of symmetry, the rounding
algorithm on this instance will sample a uniformly random point on the sphere,
completely ignoring the structure of the problem. We plot an example of such a
symmetric polynomial in Figure \ref{fig:plots}.

This also motivates the need for higher-degree relaxations that perform better
than \eqref{eq:dual-sdp-general}. In Section \ref{sec:hierarchy}, we define a
series of Sum-of-Squares based relaxations computing $\textsc{OptSOS}_k(\cA)$,
which interpolates between $\textsc{OptSOS}_1(\cA) = \textsc{OptSDP}(\cA)$ and
$\textsc{OptSOS}_d(\cA)$, the full Sum-of-Squares relaxation. We also propose a
randomized rounding algorithm which allows us to sample a feasible solution from
the relaxation. Figure \ref{fig:plots} shows the distribution sampled from this
rounding algorithm for different values of $k$ for a ``worst case'' example with
multiple global optima symmetrically distributed on the sphere. We can see that
the sampled distribution concentrates towards the true optimum values as $k$
increases. We then analyze the approximation ratio of the rounding algorithm and
provide lower bounds on the integrality gap similar to the results in Section
\ref{sec:int-gap}. Next we extend this relaxation to products of general
non-negative forms. Finally in Section \ref{sec:hardness}, we prove a hardness
of approximation result for computing $\textsc{Opt}(\cA)$ by a reduction to
$\textsc{MaxCut}$.

\begin{figure}[t]
  \label{fig:plots}
  \includegraphics[width=\textwidth]{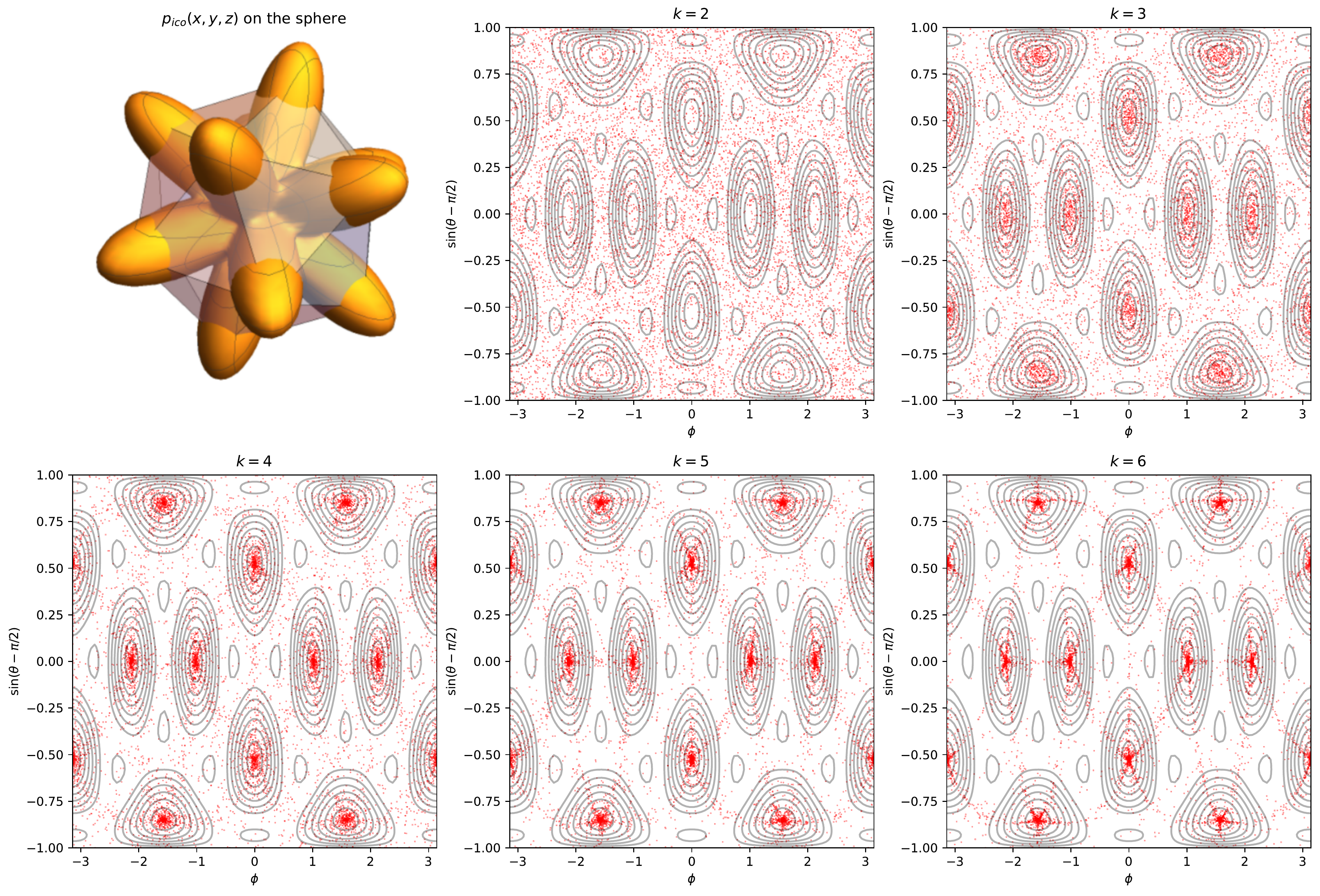}
  \centering
  \caption{$p_{\text{ico}}(x, y, z)$ is a degree 6 polynomial in 3 variables
    with icosahedral symmetry (see Example \ref{sec:example-ico} for its
    definition). The 3D plot shows the value of $p_{\text{ico}}$ on the sphere,
    superimposed on an icosahedron. We compute $\textsc{OptSOS}_k$ for
    $k=2,\ldots, 6$, relaxations of maximizing $p_{\text{ico}}(x, y, z)$ over
    the 2-sphere. The 2D plots show samples from the distribution obtained by
    the rounding algorithm to $\textsc{OptSOS}_k$, on an equal-area projection
    of the sphere. The contour plot is of $p_{\text{ico}}$ and shows its 12
    maxima on the sphere, and is overlaid on a scatter plot of 10000 points
    sampled by the rounding algorithm.}
\end{figure}

\subsection{Related Work}
There has been recent attention on problems similar to \eqref{eq:opt-prod-psd}.
The authors of this paper analyzed a special case of \eqref{eq:opt-prod-psd}
where the $A_i$ are rank-1 matrices, used in an approximation algorithm for the
permanent of PSD matrices \cite{YuanMaximizingproductslinear2021} (see Section
\ref{sec:permanent} for more details). To the best of our knowledge, the first
constant-factor approximation algorithm to \eqref{eq:opt-prod-psd} is given in
\cite{BarvinokConvexityimagequadratic2014}, and is used to prove that the
quadratic map $x \mapsto (\dotp{x, A_1 x}, \ldots, \dotp{x, A_d x})$ is close to
its convex hull in relative entropy distance. Our work improves on this
constant, and our result in Section \ref{sec:int-gap} show that it cannot be
further improved. Barvinok \cite{BarvinokFeasibilitytestingsystems1993} also
reduced the problem of certifying feasibility for systems of quadratic equations
to finding the optimum of \eqref{eq:opt-prod-psd}, and provided a polynomial
time algorithm for solving \eqref{eq:opt-prod-psd} when $d$ is fixed. A more
recent work \cite{BarvinokIntegratingproductsquadratic2020} studied a closely
related problem of approximating the integral of a product of quadratic forms on
the sphere, giving a quasi-polynomial time approximation algorithm.

For general polynomial optimization on the sphere,
\cite{doherty_convergence_2012}, \cite{BhattiproluWeakDecouplingPolynomial2017}
and \cite{Fangsumofsquareshierarchysphere2020a} gave bounds on the convergence
of the Sum-of-Squares hierarchy. These papers analyzed the convergence of higher
levels of the hierarchy (of which $\textsc{OptSOS}_d(\cA)$ is the lowest level),
proposed rounding algorithms and bounded their approximation ratios. As noted in
the introduction, these methods when applied to \eqref{eq:opt-prod-psd} takes
$n^{O(d)}$ time and only guarantees a $\Omega(1/n)$ approximation ratio, as
$p(x)$ is a high degree polynomial.

Finally, we review some strategies for speeding up Sum-of-Squares for different
polynomial optimization problems with special structure:
\begin{enumerate}
\item Solving the problem using a weakened but more computationally efficient
  version of sum of squares, for example using diagonally-dominant or
  scaled-diagonally-dominant cones instead of the positive semidefinite cone
  \cite{Ahmadiconstructionconverginghierarchies2017}. These methods typically
  sacrifice solution quality for computational tractability, but bounds on their
  approximation quality are not known.
\item Reducing the size of SDPs needed by exploiting special structure in the
  problem, such as sparsity in \cite{KojimaSparsitysumssquares2005} and
  \cite{FawziSparsesumssquares2016} or symmetry in \cite{gatermann_symmetry_2004}.
\item Using spectral methods inspired by sum of squares algorithms to solve
  average case problems \cite{hopkins_fast_2015}
  \cite{HopkinsPowerSumofSquaresDetecting2017}. They show that there exist
  spectral algorithms that are almost as good as sum of squares algorithms for
  a variety of planted problems.
\end{enumerate}
From the above works we can see that there is a trade-off between how much
structure the problem class has, how much faster the sped-up algorithm is and
how much accuracy it loses compared to running the full Sum-of-Squares
algorithm. Our work uses the compact representation of the product of
non-negative forms to arrive at the relaxation \eqref{eq:dual-sdp-general}.
This is much faster and has much better approximation guarantees than the
standard Sum-of-Squares relaxation of general polynomial optimization on the
sphere.

\subsection{Contributions}
In summary, the main contributions of this paper are:
\begin{enumerate}
\item An SDP-based relaxation \eqref{eq:dual-sdp-general} and a simple
  randomized rounding procedure that finds a feasible solution to
  \eqref{eq:opt-prod-psd}. We then prove that this is a constant-factor
  approximation algorithm to \eqref{eq:opt-prod-psd} (Theorem
  \ref{thm:approx-factor-prod-psd}).
\item Using a connection to the linear polarization constant problem (Section
  \ref{sec:lin-pol-const}) to show an integrality gap (Theorem
  \ref{thm:int-gap-SDP}) in the relaxation \eqref{eq:dual-sdp-general} that
  asymptotically matches the approximation factor shown in Theorem
  \ref{thm:approx-factor-prod-psd} as $d \rightarrow \infty$.
\item A strategy (Section \ref{sec:hierarchy}) to turn degree-2 Sum-of-Squares
  relaxations of \eqref{eq:opt-prod-psd} into degree-$k$ relaxations for any
  $k \le d$, as a way of interpolating between the relaxation
  \eqref{eq:dual-sdp-general} and the full degree-$d$ Sum-of-Squares
  relaxation. We also propose and implement a rounding algorithm to produce
  feasible solutions from these relaxations.
\item We also prove a hardness result from a reduction to $\textsc{MaxCut}$
  (Section \ref{sec:hardness}), showing that in the regime $d = \Omega(n)$, the
  problem \eqref{eq:opt-prod-psd} is NP-hard.
\end{enumerate}

\subsection{Notations}
In subsequent sections, we use $\K$ to denote either $\R$ or $\C$. For any
$x \in \C$, let $x^*$ be its complex conjugate, and $\abs{x}^2 = x x^*$. For any
matrix $A \in \K^{n \times m}$, let $A^\dagger = (A^*)^T$ be its conjugate
transpose if $\K = \C$, or its transpose if $\K = \R$. Given $a, b \in \K^n$,
let $\dotp{a, b} = a^\dagger b$ be its inner product in $\K^n$, and
$\norm{a}^2 = \dotp{a, a}$. A matrix $A$ is Hermitian if $A = A^\dagger$, and is
positive semidefinite (PSD) if in addition $x^\dagger A x \ge 0$ for all
$x \in \K$. We can also denote this as $A \succeq 0$. The $\succeq$ operator
induces a partial order called the L\"owner order, where $A \succeq B$ if
$A - B \succeq 0$.

\section{Motivation and Applications}
\label{sec:motivation}
In this section we introduce a variety of problems that can be cast into
\eqref{eq:opt-prod-psd}, maximizing the geometric mean of PSD forms over the
sphere. In particular, for a few special cases the relaxation $\textsc{OptSDP}$
is exact, corresponding to when $d=2$ (Kantorovich's inequality in Section
\ref{sec:kantorovich}) or $A_i$ are diagonal (optimizing monomials over sphere
in Section \ref{sec:monomial} and portfolio optimization in Section
\ref{sec:portfolio}). This shows that our approach generalizes many other
optimization methods and has applications to problems such as finding the linear
polarization constant of Hilbert spaces (Section \ref{sec:lin-pol-const}),
bounding the relative entropy distance between a quadratic map and its convex
hull (Section \ref{sec:quad-map-conv}), and approximating the permanent of PSD
matrices (Section \ref{sec:permanent}).

\subsection{Kantorovich's Inequality}
\label{sec:kantorovich}
\begin{proposition}[\cite{KantorovichFunctionalanalysisapplied1948}]
  \label{prop:kantorovich}
  Given a symmetric $n \times n$ positive definite matrix $A$, let
  $\lambda_1 \ge \cdots \ge \lambda_n > 0$ be its eigenvalues. Then for all
  $x \in \R^n$:
  \begin{align} \label{eq:kantorovich}
    \frac{(x^\dagger Ax)(x^\dagger A^{-1}x)}{x^\dagger x} \le \frac{1}{4}
    \paren{\sqrt\frac{\lambda_1}{\lambda_n} + \sqrt\frac{\lambda_n}{\lambda_1}}^2
  \end{align}
\end{proposition}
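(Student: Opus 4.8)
The plan is to diagonalize $A$, reduce \eqref{eq:kantorovich} to a one-variable inequality about a convex combination of the eigenvalues, and then kill the $A^{-1}$ term with a tangent-line (convexity) bound. First, \eqref{eq:kantorovich} is scale-invariant in $x$ (reading the left side as $(x^\dagger Ax)(x^\dagger A^{-1}x)/(x^\dagger x)^2$), so I would normalize $\norm{x}=1$. Writing $A = Q\operatorname{diag}(\lambda_1,\dots,\lambda_n)Q^\dagger$ with $Q$ orthogonal and replacing $x$ by $Q^\dagger x$ leaves $\norm{x}$, $x^\dagger Ax$ and $x^\dagger A^{-1}x$ unchanged, so one may assume $A=\operatorname{diag}(\lambda_1,\dots,\lambda_n)$. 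Setting $p_i = x_i^2$ gives a probability vector $p$, the left side becomes $\paren{\sum_i p_i\lambda_i}\paren{\sum_i p_i\lambda_i^{-1}}$, and a short computation shows the right side equals $(\lambda_1+\lambda_n)^2/(4\lambda_1\lambda_n)$. So it suffices to prove $\paren{\sum_i p_i\lambda_i}\paren{\sum_i p_i\lambda_i^{-1}}\le (\lambda_1+\lambda_n)^2/(4\lambda_1\lambda_n)$ for every probability vector $p$ supported on $\{\lambda_1,\dots,\lambda_n\}\subset[\lambda_n,\lambda_1]$.

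The key step is the observation that, since $\lambda_i\in[\lambda_n,\lambda_1]$, we have $(\lambda_1-\lambda_i)(\lambda_i-\lambda_n)\ge 0$; dividing by $\lambda_i>0$ and rearranging yields the affine upper bound $\lambda_i^{-1}\le (\lambda_1+\lambda_n-\lambda_i)/(\lambda_1\lambda_n)$. Averaging against $p$ gives $\sum_i p_i\lambda_i^{-1}\le (\lambda_1+\lambda_n-s)/(\lambda_1\lambda_n)$ where $s:=\sum_i p_i\lambda_i\in[\lambda_n,\lambda_1]$, hence $\paren{\sum_i p_i\lambda_i}\paren{\sum_i p_i\lambda_i^{-1}}\le s(\lambda_1+\lambda_n-s)/(\lambda_1\lambda_n)$. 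Finally, $s\mapsto s(\lambda_1+\lambda_n-s)$ is maximized at $s=(\lambda_1+\lambda_n)/2$ with value $(\lambda_1+\lambda_n)^2/4$ (equivalently, AM--GM applied to $s$ and $\lambda_1+\lambda_n-s$), which delivers the claimed bound.

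I do not expect a genuine obstacle: the only non-routine ingredient is the linearization $\lambda_i^{-1}\le(\lambda_1+\lambda_n-\lambda_i)/(\lambda_1\lambda_n)$ on $[\lambda_n,\lambda_1]$, which replaces the awkward factor $\sum_i p_i\lambda_i^{-1}$ by an affine function of $s=\sum_i p_i\lambda_i$, making the product concave in the single scalar $s$. As an alternative viewpoint (developed later in Example~\ref{ex:kantorovich-proof}), note that \eqref{eq:kantorovich} is exactly the assertion that for $d=2$ with $A_1=A$, $A_2=A^{-1}$ the relaxation $\textsc{OptSDP}(\cA)$ is exact and equals $\tfrac12\paren{\sqrt{\lambda_1/\lambda_n}+\sqrt{\lambda_n/\lambda_1}}$: for diagonal $A$ one has $\dotp{A,X}=\sum_i\lambda_i X_{ii}$ and $\dotp{A^{-1},X}=\sum_i X_{ii}/\lambda_i$ with $\sum_i X_{ii}=1$ and $X_{ii}\ge 0$, so the same tangent-line argument applied to the diagonal of $X$ bounds $\textsc{OptSDP}(\cA)$, with the bound attained at $X^*=\tfrac12(v_1 v_1^\dagger + v_n v_n^\dagger)$ on the extreme eigenvectors of $A$.
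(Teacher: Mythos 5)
Your proof is correct, but it takes a genuinely different route from the one the paper gives in Example~\ref{ex:kantorovich-proof}. The paper follows Newman's argument, which is the $d=2$ instance of its general machinery: first apply AM/GM to the rescaled product $(\alpha x^\dagger Ax)(\alpha^{-1}x^\dagger A^{-1}x)$ to get the quadratic upper bound $\tfrac14\,\lambda_{\max}\!\left(\tfrac{1}{\alpha}A+\alpha A^{-1}\right)^2$, then choose $\alpha=\sqrt{\lambda_1\lambda_n}$ and bound the largest eigenvalue using the fact that $t\mapsto t/\alpha+\alpha/t$ is convex on $[\lambda_n,\lambda_1]$ and hence maximized at an endpoint. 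You instead diagonalize first, pass to the probability vector $p_i=x_i^2$, and replace $\lambda_i^{-1}$ by its secant line $(\lambda_1+\lambda_n-\lambda_i)/(\lambda_1\lambda_n)$ over $[\lambda_n,\lambda_1]$ (the inequality $(\lambda_1-\lambda_i)(\lambda_i-\lambda_n)\ge 0$), reducing everything to maximizing the scalar quadratic $s(\lambda_1+\lambda_n-s)$. Both arguments are sound and of comparable length; each step of yours checks out, including your (correct) reading of the left-hand side as $(x^\dagger Ax)(x^\dagger A^{-1}x)/(x^\dagger x)^2$ to restore homogeneity, which the paper's display states with a single power of $x^\dagger x$. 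What the paper's version buys is that it is literally the rescaled quadratic upper bound \eqref{eq:primal-sdp} specialized to $\mathcal{A}=(A,A^{-1})$, so it motivates the multipliers $\alpha_i$ and the relaxation $\textsc{OptSDP}$; what yours buys is a self-contained elementary proof that also identifies the extremal $X^*=\tfrac12(v_1v_1^\dagger+v_nv_n^\dagger)$, nicely confirming the exactness of the $d=2$ relaxation that the paper only asserts via Proposition~\ref{prop:rank-bound-sym}.
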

This inequality is used in the analysis of the convergence rate for gradient
descent (with exact line search) on quadratic objectives
$x^\dagger Ax + b^\dagger x$ (see, for example,
\cite{LuenbergerLinearnonlinearprogramming2008}). It is used to prove that the
error decreases by a factor of
$\pfrac{\lambda_1-\lambda_n}{\lambda_1+\lambda_n}^2$ with each step taken. It
can also be used to bound the efficiency of estimators in noisy linear
regression where $A$ is the covariance matrix of the noise
\cite{RaghavachariLinearProgrammingProof1986a}. The optimization problem
\eqref{eq:opt-prod-psd} is a generalization of this inequality to higher degree
products. However unlike in \eqref{eq:kantorovich} the $A_i$ may not be
simultaneously diagonalizable.

\subsection{Optimizing Monomials over the Sphere}
\label{sec:monomial}
Maximizing monomials on the sphere is a special case of \eqref{eq:opt-prod-psd}
where $A_i$ are diagonal. We can compute the exact value of the maximum of any
monomial over the sphere, and we have the following result for $\K = \R$ (a
similar result holds for $\K = \C$).
\begin{proposition} \label{prop:max-monomial-sphere}
  Let $x^\beta = \prod_{i=1}^n x_i^{\beta_i}$ be any monomial of degree
  $d = \sum_i \beta_i$. Then
  \begin{align*}
    \max_{\norm{x}=1,\, x \in \R^n} \paren{x^\beta}^{\frac{2}{d}}
    = \frac{1}{d}\prod_{i=1}^n \beta_i^{\beta_i/d}
  \end{align*}
\end{proposition}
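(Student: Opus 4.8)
The plan is to maximize $\prod_{i=1}^n x_i^{\beta_i}$ subject to $\sum_i x_i^2 = 1$, which by taking $t_i = x_i^2 \ge 0$ becomes maximizing $\prod_i t_i^{\beta_i/d}$ (the weighted geometric mean of the $t_i$ with weights $\beta_i/d$) subject to $\sum_i t_i = 1$. (Coordinates with $\beta_i = 0$ should be set to zero, so WLOG all $\beta_i \ge 1$.) The weighted AM-GM inequality immediately gives $\prod_i t_i^{\beta_i/d} \le \sum_i \frac{\beta_i}{d} t_i$, but that bound is not tight here because the weights $\beta_i/d$ do not match the uniform coefficients in the constraint $\sum_i t_i = 1$. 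Instead I would introduce a scaling: write $t_i = c_i u_i$ for positive constants $c_i$ to be chosen, so that $\prod_i t_i^{\beta_i/d} = \big(\prod_i c_i^{\beta_i/d}\big)\prod_i u_i^{\beta_i/d}$, and then apply weighted AM-GM to the $u_i$.

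Concretely, first I would establish the elementary fact that for nonnegative reals and positive weights $w_i$ summing to $1$, $\prod_i a_i^{w_i} \le \sum_i w_i a_i$, with equality iff all the $a_i$ are equal. Apply this with $w_i = \beta_i/d$ and $a_i = t_i / (\beta_i/d) = d t_i/\beta_i$: this yields $\prod_i (d t_i/\beta_i)^{\beta_i/d} \le \sum_i \frac{\beta_i}{d}\cdot \frac{d t_i}{\beta_i} = \sum_i t_i = 1$. Rearranging, $\prod_i t_i^{\beta_i/d} \le \prod_i (\beta_i/d)^{\beta_i/d} = \frac{1}{d}\prod_i \beta_i^{\beta_i/d}$, using $\sum_i \beta_i/d = 1$ to pull out the factor $(1/d)^{\sum_i \beta_i/d} = 1/d$. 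This gives the upper bound. For the matching lower bound, equality in AM-GM holds when all $a_i = d t_i/\beta_i$ are equal, i.e. $t_i = \beta_i/d$; since these $t_i$ are nonnegative and sum to $1$, the point $x_i = \sqrt{\beta_i/d}$ is feasible on the sphere and attains the bound, so the maximum equals $\frac1d\prod_i \beta_i^{\beta_i/d}$, and raising $(x^\beta)$ to the $2/d$ power matches the claimed expression since $(x^\beta)^2 = \prod_i x_i^{2\beta_i} = \prod_i t_i^{\beta_i}$ and we want the $1/d$ power of that.

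There is essentially no hard step here — the only things to be careful about are (i) handling $\beta_i = 0$ coordinates, which simply get set to $0$ and dropped, and adopting the convention $0^0 = 1$ if one prefers to keep them; and (ii) getting the exponents straight when passing between $x^\beta$, its square, and the $d$-th root. The substitution $t_i = x_i^2$ is the key idea that converts the sphere constraint into a simplex constraint and the monomial into a weighted geometric mean, after which the weighted AM-GM inequality (applied with the right choice of $a_i$) does all the work, and its equality case supplies the optimal point. If one wishes to connect this to the rest of the paper, one can additionally note that $A_i = \operatorname{diag}(e_i)$ repeated $\beta_i$ times makes this an instance of \eqref{eq:opt-prod-psd}, and verify that $\textsc{OptSDP}$ returns the same value by taking $X = \operatorname{diag}(\beta_1/d, \ldots, \beta_n/d)$, but the proposition itself is a clean two-line AM-GM argument.
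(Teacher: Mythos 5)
Your proof is correct, but it takes a genuinely different route from the paper's. The paper proves Proposition \ref{prop:max-monomial-sphere} in Appendix \ref{sec:prop-monomial-proof} by an integral/limit argument: it uses Folland's exact formula for $\int_{\mathcal{S}^{n-1}} x^{2k\beta}\,dx$ in terms of Gamma functions, invokes the fact that $\max_{\|x\|=1} x^{2\beta} = \lim_{k\to\infty}\bigl(\int_{\mathcal{S}^{n-1}} x^{2k\beta}\,dx\bigr)^{1/k}$ (the $L^p\to L^\infty$ norm limit), and then evaluates the limit with Stirling's approximation. You instead substitute $t_i = x_i^2$ to turn the sphere constraint into a simplex constraint, and apply weighted AM--GM with $w_i = \beta_i/d$ and $a_i = d t_i/\beta_i$, with the equality case exhibiting the explicit maximizer $x_i = \sqrt{\beta_i/d}$. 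Your argument is more elementary and constructive — it produces the optimal point rather than just the optimal value, which is consonant with how the paper later uses this example (the multipliers $\alpha_i$ in Section \ref{sec:monomial-max} are exactly calibrated to this maximizer) — while the paper's approach sidesteps any optimality/equality-case reasoning at the cost of an exact integration formula and an asymptotic computation. Your handling of the edge cases ($\beta_i=0$ coordinates, and the passage between $x^\beta$, its square, and the $d$-th root) is careful and correct.
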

This result is proven in Appendix \ref{sec:prop-monomial-proof}. Since we know
the exact value for this special case, it is useful to use this problem to
compare different methods of speeding up Sum-of-Squares. In particular, the
algorithms derived from Sum-of-Squares in (\cite{hopkins_fast_2015} and
\cite{BhattiproluWeakDecouplingPolynomial2017}) lose the structure of this
problem and do not return the exact optimum. We will see in section
\ref{sec:monomial-max} that our relaxation preserves this structure and is exact
in this case.

\subsection{Linear Polarization Constants for Hilbert Spaces}
\label{sec:lin-pol-const}
When all the $A_i$ in \eqref{eq:opt-prod-psd} are rank-1, the optimization
problem has connections to the linear polarization constant problem:
\begin{definition}[Linear polarization constant of a normed space]
  \label{def:lin-pol-const}
  Given a normed space $X$, let $X^*$ be its dual and
  $\mathcal{S}_X = \{ x \in X: \norm{x} = 1\}$ be the sphere with respect to the
  norm. Then the $d$-th linear polarization constant of $X$ is given by:
  \begin{align*}
    c_d(X) \defn \paren{\inf_{f_1,\ldots,f_d \in S_{X^*}} \sup_{x \in S_X}
    \abs{f_1(x)\cdots f_d(x)}}^{-1}
  \end{align*}
\end{definition}
This problem has been studied in the papers
\cite{PappasLinearpolarizationconstants2004},
\cite{Marcuslowerboundproduct1997}, and
\cite{Matolcsireallinearpolarization2006}. In particular, it is proved in
\cite{Arias-de-ReynaGaussianvariablespolynomials1998} that
$c_d(\C^d) = d^{d/2}$, but the analogous result for $\R^d$ is still a
conjecture:
\begin{conjecture}[\cite{PappasLinearpolarizationconstants2004}] \label{conj:linear-pol-const}
  Let $v_1, \ldots, v_d$ and $x$ be vectors in $\R^d$.
  \begin{align} \label{eq:max-prod-linear-forms}
    \min_{\norm{v_1}=1, \ldots, \norm{v_d}=1} \max_{\norm{x} = 1}
    \abs{\prod_{i=1}^d  \dotp{v_i, x}}
    = d^{-d/2}
  \end{align}
  And is achieved when $v_i$ are (up to rotation) the basis vectors $e_i$.
\end{conjecture}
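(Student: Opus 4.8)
Since the statement is explicitly a conjecture, I do not expect a complete proof; the plan is to separate it into the routine upper bound and the genuinely open lower bound, and to indicate how far the relaxation of this paper gets.

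For the upper bound, take $v_i = e_i$, so that $\prod_{i=1}^d \dotp{v_i,x} = \prod_{i=1}^d x_i$. By AM--GM, $\prod_i x_i^2 \le \paren{\tfrac1d\sum_i x_i^2}^d = d^{-d}$ on the unit sphere, with equality at $x = d^{-1/2}(1,\dots,1)$; equivalently this is Proposition~\ref{prop:max-monomial-sphere} with $\beta = (1,\dots,1)$. Hence $\max_{\norm x=1}\abs{\prod_i\dotp{e_i,x}} = d^{-d/2}$, and a simultaneous rotation of the $e_i$ does not change the value, so the left-hand side is $\le d^{-d/2}$.

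The hard direction is to show that for \emph{every} unit system $v_1,\dots,v_d\in\R^d$ there is a unit $x$ with $\abs{\prod_i\dotp{v_i,x}}\ge d^{-d/2}$. The standard route, which closes the complex analogue $c_d(\C^d)=d^{d/2}$, is Gaussian averaging: let $g\sim\rnormal(0,I_d)$ and compare $\mathbb{E}$ of the product against $\mathbb{E}\,\norm g^{d}$, using that $g/\norm g$ is uniform on the sphere and independent of $\norm g$. Over $\C$ the expectation of $\prod_i\abs{\dotp{v_i,g}}^2$ equals a fixed multiple of the permanent of the Gram matrix $G_{ij}=\dotp{v_i,v_j}$, which is $\ge\prod_i G_{ii}=1$ by Marcus's inequality for PSD Hermitian matrices, and the sharp constant drops out. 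Over $\R$ the clean identity is $\mathbb{E}\,\prod_i\dotp{v_i,g}=\operatorname{haf}(G)$ (and this vanishes when $d$ is odd); hafnians of PSD matrices are not bounded away from $0$, and putting absolute values inside the expectation destroys the Wick identity, so this argument only delivers $\ge c\,d^{-d/2}$ with $c<1$. Concretely, the relaxation here is exactly such a partial result: applying Theorem~\ref{thm:approx-factor-prod-psd} to $\cA=(v_1v_1^\dagger,\dots,v_dv_d^\dagger)$ at the feasible point $X=\tfrac1d I_d$ gives $\textsc{OptSDP}(\cA)\ge\tfrac1d$, hence $\max_{\norm x=1}\abs{\prod_i\dotp{v_i,x}}=\textsc{Opt}(\cA)^{d/2}\ge\paren{e^{-L_r(\R)}/d}^{d/2}$, which is the conjectured bound up to the factor $e^{-dL_r(\R)/2}\to0$ -- and Section~\ref{sec:int-gap} shows this loss is unavoidable for the relaxation.

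A plausible line of attack is therefore structural rather than purely probabilistic: first argue that the infimum over $v$ is attained at a highly symmetric configuration (a tight frame invariant under a finite reflection group), reducing $\max_x\prod_i\dotp{v_i,x}$ to an orbit computation; then show that among such symmetric frames the orthonormal basis is optimal via a second-order/convexity analysis of $\max_x\sum_i\log\dotp{v_i,x}^2$ in the frame parameters; or alternatively run an induction on $d$ that peels off one vector while carefully tracking the constraint $\norm x=1$. The main obstacle is precisely the absence, over $\R$, of an exact integral identity playing the role of the permanent formula over $\C$: one needs a genuinely real averaging argument -- a well-chosen non-Gaussian measure on $x$, or a sharp induction -- that produces $d^{-d/2}$ exactly rather than a constant-factor relaxation of it.
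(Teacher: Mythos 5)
The paper states this result only as a conjecture (attributed to Pappas--R\'ev\'esz) and offers no proof, so there is nothing to compare your argument against; what you have written is an accurate assessment of the situation rather than a gap in your reasoning. Your two verifiable claims are both correct: the upper bound $\min_v\max_x\abs{\prod_i\dotp{v_i,x}}\le d^{-d/2}$ follows exactly as you say from AM--GM (equivalently Proposition~\ref{prop:max-monomial-sphere} with $\beta=(1,\dots,1)$), and the lower bound $\max_x\abs{\prod_i\dotp{v_i,x}}\ge\paren{e^{-L_r(\R)}/d}^{d/2}$ is the correct consequence of Theorem~\ref{thm:approx-factor-prod-psd} applied at the feasible point $X=\tfrac1d I_d$, which indeed loses an exponentially decaying factor $e^{-dL_r(\R)/2}$ relative to the conjecture --- a loss that Theorem~\ref{thm:int-gap-SDP} shows is intrinsic to the relaxation, so this route cannot close the conjecture. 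Your diagnosis of why the complex proof of $c_d(\C^d)=d^{d/2}$ (Gaussian averaging plus the permanent inequality for PSD Gram matrices, which the paper cites as Lieb's theorem) fails over $\R$ is also correct: the real Wick identity produces a hafnian, which is not bounded below on PSD correlation matrices and vanishes for odd $d$. The only thing to flag is that your closing paragraph is a research program, not a proof, and should be labeled as such if it appears anywhere near the statement.
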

We see that \eqref{eq:max-prod-linear-forms} is a minimax problem with its inner
maximization problem equivalent to solving the following optimization problem:
\begin{align} \label{eq:max-prod-linear-forms-squared}
  \max_{\norm{x} = 1} \paren{\prod_{i=1}^d \dotp{v_i, x}^2}^{1/d}
\end{align}
Which is exactly \eqref{eq:opt-prod-psd} with $A_i = v_i v_i^\dagger$. Exact values
for $c_d(\K^n)$ where $\K = \R \text{ or } \C$ and $d > n$ are not known, but
\cite{PappasLinearpolarizationconstants2004} computed the asymptotic value
$\lim_{d\rightarrow \infty} c_d(\K^n)^{1/d}$. We will use these results later to
construct integrality gap instances in Sections \ref{sec:int-gap} and
\ref{sec:sos-int-gap}.

\subsection{Distance of a Quadratic Map to its Convex Hull}
\label{sec:quad-map-conv}
Given $A_1, \ldots, A_d \succ 0$, let $\varphi(x): \K^n \rightarrow \R_+^d$ be a
quadratic map that maps $x$ to $(\dotp{x, A_1 x}, \ldots, \dotp{x, A_d x})$. The
convexity of the image of $\K^n$ by this map has many implications in controls
and optimization (see, for example \cite{PolikSurveySLemma2007}). The set
$\varphi(\K^n)$ is not convex in general, although it is for special cases (such
as when $d=2$). On the other hand, $\conv(\varphi(\K^n))$ has a semidefinite
representation. Barvinok \cite{BarvinokConvexityimagequadratic2014} investigated
how well $\conv(\varphi(\K^n))$ approximates $\varphi(\K^n)$ in the relative
entropy distance. Since both sets are cones in the non-negative orthant, it is
natural to compare the size of their intersection with the simplex
$\Delta_d = \{x \in \R^d \mid x_i \ge 0,\, \sum_i x_i = 1\}$.
\begin{theorem}[Theorem 1 in \cite{BarvinokConvexityimagequadratic2014}]
  Let $a \in \conv(\varphi(\R^n)) \cap \Delta_d$. Then there exists a point
  $b \in \varphi(\R^n) \cap \Delta_d$ and an absolute constant $\beta = 4.8 > 0$ such
  that
  \begin{align} \label{eq:kl-quad-map}
    \sum_{i=1}^d a_i \ln\pfrac{a_i}{b_i} \le \beta.
  \end{align}
\end{theorem}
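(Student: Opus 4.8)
The plan is to connect the relative-entropy statement \eqref{eq:kl-quad-map} to the multiplicative approximation guarantee of $\textsc{OptSDP}$ established in Theorem \ref{thm:approx-factor-prod-psd}. Given $a \in \conv(\varphi(\R^n)) \cap \Delta_d$, I would first observe that $a$ arises as a convex combination of points $\varphi(x^{(j)})$, which by a standard argument (integrating against a measure, i.e.\ taking $X = \sum_j p_j x^{(j)} (x^{(j)})^\dagger$) means there is a feasible $X \succeq 0$, $\Tr(X) = 1$ with $\dotp{A_i, X} = a_i$ for all $i$. The normalization $\sum_i a_i = 1$ together with AM--GM gives $\prod_i \dotp{A_i, X}^{1/d} = \prod_i a_i^{1/d} \le \frac{1}{d}$, but more to the point, $X$ realizes the value $\paren{\prod_i a_i}^{1/d}$ as a feasible point of \eqref{eq:dual-sdp-general}, so this quantity is at most $\textsc{OptSDP}(\cA)$.

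Next I would apply the rounding algorithm to this particular $X$ (not necessarily the SDP optimum): sample $y \sim \mathcal{N}_{\R}(0, X)$ and set $x = y/\norm{y}$. The analysis underlying Theorem \ref{thm:approx-factor-prod-psd} shows that $\E\brak{\log \prod_i \dotp{x, A_i x}}$ (the expectation taken appropriately over the rounding) is at least $\log \prod_i \dotp{A_i, X} - d \cdot L_r(\R)$, where $r = \rank(X)$; equivalently, there exists a point $x^*$ on the sphere with $\prod_i \dotp{x^*, A_i x^*} \ge e^{-d L_r(\R)} \prod_i a_i$. Setting $b_i = \dotp{x^*, A_i x^*} / \paren{\sum_j \dotp{x^*, A_j x^*}}$, we get a point $b \in \varphi(\R^n) \cap \Delta_d$, and the KL divergence becomes
\[
  \sum_i a_i \ln\pfrac{a_i}{b_i} = \sum_i a_i \ln\pfrac{a_i}{\dotp{x^*,A_i x^*}} + \ln\paren{\textstyle\sum_j \dotp{x^*, A_j x^*}}.
\]
The hard part will be controlling these two terms simultaneously. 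The first term is bounded using $\sum_i a_i \ln a_i \le 0$ (since $a \in \Delta_d$ and each $a_i \le 1$) and the per-coordinate control from rounding — but the rounding guarantee is on the \emph{product} $\prod_i \dotp{x^*, A_i x^*}$, i.e.\ on $\sum_i \ln \dotp{x^*, A_i x^*}$ weighted uniformly, whereas the KL term weights by $a_i$. Bridging this gap is the crux: one route is to use the full strength of the Gaussian rounding analysis, which actually controls each factor's distribution (each $\dotp{x, A_i x}$ concentrates around $\dotp{A_i, X} = a_i$ up to a bounded multiplicative factor whose logarithm has expectation $-L_r(\R)$ and controlled variance), and then take a weighted combination; alternatively one reparametrizes so that the relevant expectation is exactly the $a_i$-weighted sum. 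The second term, $\ln \sum_j \dotp{x^*, A_j x^*}$, is bounded above because $\sum_j \dotp{x^*, A_j x^*} = \dotp{\sum_j A_j,\, x^* (x^*)^\dagger} \le \Tr(\sum_j A_j) = \sum_j \dotp{A_j, I}$, and since $\sum_j a_j = \dotp{\sum_j A_j, X} = 1$ with $\Tr(X)=1$, one relates $\Tr(\sum_j A_j)$ to the operator norm or to $1$ after a suitable rescaling of the $A_i$; careful bookkeeping shows this contributes a constant.

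Finally I would collect constants: the bound on the KL divergence is $-\sum_i a_i \ln a_i$ plus $d \cdot L_r(\R) \cdot (\text{something } O(1/d))$-type terms plus the $\ln \sum_j$ term, and optimizing over the worst case $r$ (using $L_r(\R) < 1.271$ from Theorem \ref{thm:approx-factor-prod-psd}) yields an absolute constant $\beta$. I expect the resulting constant to come out better than Barvinok's $4.8$, consistent with the paper's claim in the Related Work section that "our work improves on this constant." The main obstacle, to reiterate, is the mismatch between the uniform-weight guarantee of the rounding algorithm and the $a$-weighted form of the relative entropy; resolving it cleanly — rather than by a lossy Cauchy--Schwarz or $\ell_\infty$ bound — is what makes the improved constant possible.
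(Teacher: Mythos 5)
Your starting point matches the paper's: realize $a_i = \dotp{A_i, X}$ for some feasible $X \succeq 0$ and run the Gaussian rounding from Theorem \ref{thm:approx-factor-prod-linear}. But the obstacle you single out as the crux --- the mismatch between the uniform-weight product guarantee and the $a_i$-weighted relative entropy --- is not actually an obstacle. The proof of Theorem \ref{thm:approx-factor-prod-linear} establishes the bound \emph{per index}: for each $i$ separately, $\Ex_y\bracket{\log\dotp{y, A_i y}} \ge \log\dotp{A_i, X} - L_r(\K)$ (the two displayed estimates there, one for $\Ex_z\log\dotp{A_iUz,Uz}$ and one for $\Ex_z\log\norm{Uz}^2$, hold for every $i$; the uniform average is only taken at the very end). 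Taking the $a_i$-weighted combination instead of the uniform one is therefore immediate, with no variance control or reparametrization needed, and yields $\Ex_y\bracket{\sum_i a_i\log(a_i/b_i)} \le L_r(\K)$ directly once $b$ is defined correctly.

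The genuine gap is the normalization term $\ln\sum_j\dotp{x^*, A_j x^*}$. Your proposed bound $\sum_j\dotp{x^*, A_j x^*} \le \Tr\bigl(\sum_j A_j\bigr)$ is not a constant: the hypotheses only give $\dotp{\sum_j A_j, X} = 1$, which says nothing about $\Tr\bigl(\sum_j A_j\bigr)$ or $\lambda_{\max}\bigl(\sum_j A_j\bigr)$ (mass of $\sum_j A_j$ orthogonal to the range of $X$ is invisible to $a$ but blows up the trace), so ``careful bookkeeping'' cannot rescue this step, and a scalar rescaling of the $A_i$ does not help either. The missing idea is a \emph{congruence} rescaling: set $L = \sum_i A_i \succ 0$, $A_i' = L^{-1/2}A_iL^{-1/2}$ and $X' = L^{1/2}XL^{1/2}$. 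Then $\sum_i A_i' = I$, so for any unit vector $y$ one has $\sum_i\dotp{y, A_i'y} = 1$ identically --- the rounded point $b_i = \dotp{y, A_i'y}$ lands on $\Delta_d$ with no renormalization and the troublesome logarithm never appears. Moreover $\dotp{A_i', X'} = \dotp{A_i, X} = a_i$, $\Tr(X') = \sum_i a_i = 1$, and $b = \varphi(L^{-1/2}y) \in \varphi(\R^n)$, so rounding from $\knormal(0, X')$ and applying the per-index bound above gives $\beta = L_r(\K)$, which is $< 1.271$ for $\K = \R$ and indeed improves on $4.8$.
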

Next we show how we can use proof of Theorem \ref{thm:approx-factor-prod-psd} to
improve the constant $\beta$, as well as extend the result to $\C^n$. Since
$a \in \conv(\varphi(\K^n))$, we can find $X \succeq 0$ such that
$a_i = \dotp{A_i, X}$. If we let $L = \sum_i A_i \succ 0$,
$A'_i = L^{-1/2}A_i L^{-1/2}$ and $X' = L^{1/2} X L^{1/2}$, we have
$\Tr(X') = \sum_i \dotp{A'_i, X'} = \sum_i a_i = 1$. Now if we sample
$z \sim \knormal(0, X')$ and let $y = z/\norm{z}$, then from the proof of Theorem
\ref{thm:approx-factor-prod-linear}, we have
\begin{align*}
  \Ex_y \bracket{\sum_i a_i \log \dotp{y, A'_i y}}
  \ge -L_r(\K) + \sum_i a_i \log \dotp{A'_i, X'},
\end{align*}
where $r$ is the rank of $X'$ satisfying $\dotp{A'_i, X'} = a_i$ and
$\Tr(X') = 1$. If we let $b_i = \dotp{y, A'_i y}$, we can choose
$\beta = L_r(\K)$ in \eqref{eq:kl-quad-map}. Furthermore, Theorem
\ref{thm:int-gap-SDP} shows that this constant is asymptotically tight.

\subsection{Permanents of PSD Matrices}
\label{sec:permanent}
Given a matrix $M \in \C^{n \times n}$, its permanent is defined to be
\begin{align*}
  \per(M) = \sum_{\sigma \in \mathbf{S}_n} \prod_{i=1}^n M_{i,\sigma(i)},
\end{align*}
Where the sum is over all permutations of $n$ elements. If $M$ is Hermitian
positive semidefinite (PSD), \cite{AnariSimplyExponentialApproximation2017a} and
\cite{YuanMaximizingproductslinear2021}
analyzed a SDP-based approximation algorithm that produces a simply exponential
approximation factor to $\per(M)$. Let $M = V^\dagger V$ and $v_i$ are the
columns of $V$. In \cite{YuanMaximizingproductslinear2021}, the problem of approximating
$\per(M)$ is related to the problem of maximizing a product of linear forms over
the complex sphere
\begin{align*}
  r(M) \defn \max_{\norm{x}^2=n} \prod_{i=1}^n \abs{\dotp{x, v_i}}^2,
\end{align*}
and its convex relaxation $\rel(M)$ (obtained in a similar manner as
\eqref{eq:dual-sdp-general}) by showing that
\begin{align*}
  \frac{n!}{n^n} r(M) \le \per(M) \le \rel(M).
\end{align*}
Thus we can approximate $\per(M)$ by analyzing the approximation quality of
$\rel(M)$ as a relaxation of $r(M)$. It is easy to see that $r(M)$ is equivalent
to a special case of \eqref{eq:opt-prod-psd} when the $A_i$ are all rank-1, and
the result of Theorem \ref{thm:approx-factor-prod-psd} applied to this problem
gives the same approximation factor to the permanent as
\cite{YuanMaximizingproductslinear2021}.

\subsection{Portfolio Optimization}
\label{sec:portfolio}
Suppose there is a collection of $n$ stocks with their returns denoted as $r$,
where $r_i > 0$ denotes the return of stock $i$ ($r_i < 1$ making a loss and
$r_i > 1$ making a profit). We wish to select a mix of these stocks to invest
in, allotting a fraction $y_i$ of our capital to stock $i$ so as to maximize our
expected return. We have the historical returns $r(1), \ldots, r(d)$ over $d$ time
periods to base our decision on. The strategy employed by
\cite{WeideStrategyWhichMaximizes1977} is to maximize the geometric mean of the
total returns
\begin{align*}
  \max_{y \ge 0, \sum_i y_i = 1} \paren{\prod_{i=1}^d \dotp{y, r(i)}}^{1/d},
\end{align*}
which can be interpreted as rebalancing the portfolio after each time
period. This is a special case of \eqref{eq:opt-prod-psd} in which $A_i$ are
diagonal matrices with $r(i)$ on the diagonal and $y_i = x_i^2$. In Section
\ref{sec:exact-diagonal} we show that in this case the relaxation
\eqref{eq:dual-sdp-general} is exact.

\subsection{Nash Social Welfare}
Suppose $x$ is an allocation of a set of divisible resources to $d$ agents each
with a non-negative utility function $A_i(x)$. We can ensure fairness by
choosing the objective function, which result in different notions of fairness,
ranging from the utilitarian $\max_x \frac{1}{d}\sum_i A_i(x)$ to egalitarian
$\max_x \min_i A_i(x)$. Interpolating between these is the Nash social welfare
objective $\max_x \paren{\prod_i A_i(x)}^{1/d}$, which is the geometric mean of
the utilities. This objective is well-studied for allocation of indivisible
items \cite{CaragiannisUnreasonableFairnessMaximum2016}, from hardness results
\cite{LeeAPXhardnessmaximizingNash2017} to constant factor approximation
algorithms \cite{AnariNashSocialWelfare2017}. In our setting, the utility function
for agent $i$ is $x^\dagger A_i x$, a non-negative quadratic form on $x$.

\section{Semidefinite Relaxation}
\label{sec:sdp-relax}
Before proving Theorem \ref{thm:approx-factor-prod-psd}, we derive our
semidefinite relaxation of the problem and give interpretations for both its
primal and dual forms. The insights gained from deriving both the primal and
dual relaxations will be helpful in Section \ref{sec:hierarchy} when
generalizing to higher-degree relaxations. Recall that the polynomial we wish to
optimize is
\begin{align*}
  p_{\cA}(x) = \prod_{i=1}^d \dotp{x, A_ix},
\end{align*}
and we want to find an upper bound of $p_{\cA}(x)^{1/d}$ on the sphere. One can
compute an upper bound using the degree-$d$ Sum-of-Squares relaxation
\eqref{eq:sos-relax-full} over the sphere,
but this involves solving a SDP of size $n^{O(d)}$, which is computationally
inefficient and does not exploit the compact representation of $p_\cA(x)$. One
computationally efficient upper bound is given by
$\prod_{i=1}^d \norm{A_i}^{1/d}$, the geometric mean of the spectral norms of
$A_i$, but it can differ from the true optimum multiplicatively by a factor of
$n^{-1/2}$ (see Proposition \ref{prop:max-monomial-sphere}). In the next few
sections, we will introduce a series of weaker but computationally more
efficient bounds, which still have good approximation guarantees.

\subsection{Quadratic Upper Bounds}
The first approach uses the arithmetic mean/geometric mean (AM/GM) inequality:
\begin{align*}
  \paren{\prod_{i=1}^d \dotp{x, A_i x}}^\frac{1}{d}
  \leq \frac{1}{d} \sum_{i=1}^d \dotp{x, A_i x} =
  \frac{1}{d} \, x^\dagger \paren{\sum_{i=1}^d A_i} x
\end{align*}
This then becomes an eigenvalue problem. Maximizing this quadratic form over the
unit sphere, we obtain the following:
\begin{proposition}
  Let $G = \sum_{i=1}^d A_i$. Then if $\norm{x} = 1$,
  \begin{align*}
    p_\cA(x) \le \paren{\frac{\lambda_{\max}(G)}{d}}^d.
  \end{align*}
\end{proposition}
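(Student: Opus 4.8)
The plan is to combine the AM/GM inequality with a standard eigenvalue bound. First I would apply the AM/GM inequality exactly as displayed in the text immediately preceding the proposition: for any $x$ with $\norm{x}=1$,
\[
  p_\cA(x)^{1/d} = \paren{\prod_{i=1}^d \dotp{x, A_i x}}^{1/d}
  \le \frac{1}{d} \sum_{i=1}^d \dotp{x, A_i x} = \frac{1}{d}\, x^\dagger G x,
\]
where $G = \sum_{i=1}^d A_i$. This step requires that each factor $\dotp{x, A_i x}$ is nonnegative, which holds since the $A_i$ are PSD, so AM/GM applies to the nonnegative reals $\dotp{x, A_1 x}, \ldots, \dotp{x, A_d x}$.

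Next I would maximize the quadratic form $x^\dagger G x$ over the unit sphere. Since $G$ is a sum of Hermitian PSD matrices, it is itself Hermitian and PSD, and the Rayleigh quotient characterization gives $\max_{\norm{x}=1} x^\dagger G x = \lambda_{\max}(G)$. Combining the two steps yields $p_\cA(x)^{1/d} \le \lambda_{\max}(G)/d$ for all $\norm{x}=1$, and raising both sides to the $d$-th power (both sides are nonnegative) gives
\[
  p_\cA(x) \le \paren{\frac{\lambda_{\max}(G)}{d}}^d,
\]
which is the claimed bound.

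There is essentially no obstacle here — the proof is two lines. The only thing worth a sentence of care is noting that one does not even need to invoke the optimal rounding machinery; this proposition is just recording the quantitative consequence of the elementary AM/GM upper bound on $p_\cA$ over the sphere, and it serves as the baseline against which the sharper SDP relaxation $\textsc{OptSDP}(\cA)$ is later compared. (Indeed $\frac{1}{d}\lambda_{\max}(G)$ is exactly $\textsc{OptSOS}_1$-type data, and one could remark that $\textsc{OptSDP}(\cA) \le \frac{1}{d}\lambda_{\max}(G)$ by feasibility of the top eigenvector's projector, but that is not needed for the statement as given.)
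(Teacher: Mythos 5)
Your proof is correct and follows exactly the paper's (implicit) argument: the AM/GM inequality displayed immediately before the proposition, followed by the Rayleigh-quotient bound $\max_{\norm{x}=1} x^\dagger G x = \lambda_{\max}(G)$. Nothing is missing.
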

This technique is powerful enough to prove Kantorovich's inequality (Proposition
\ref{prop:kantorovich}), as we will see in the following example. This is a
adaptation of Newman's proof in \cite{NewmanKantorovichinequality1960}.
\begin{example}[Proof of Kantorovich's inequality]
  \label{ex:kantorovich-proof}
  Since both $A$ and $A^{-1}$ are positive definite, we can apply the AM/GM
  inequality on $(\alpha x^\dagger Ax)(\alpha^{-1} x^\dagger A^{-1}x)$ for any $\alpha > 0$:
  \begin{align*}
    (x^\dagger Ax)(x^\dagger A^{-1}x)
    \le \frac{1}{4} \paren{x^\dagger \paren{\frac{1}{\alpha} A + \alpha A^{-1}} x}^2
    \le \frac{1}{4} \lambda_{\max}\paren{\frac{1}{\alpha} A + \alpha A^{-1}}^2.
  \end{align*}
  Without loss of generality we assume $A$ and $A^{-1}$ are diagonal, as they
  are simultaneously diagonalizable. Choosing
  $\alpha = \sqrt{\lambda_1\lambda_n}$,
  \begin{align*}
    \lambda_{\max}\paren{\frac{1}{\alpha} A + \alpha A^{-1}}
    = \max_i \paren{\frac{\lambda_i}{\sqrt{\lambda_1 \lambda_n}}
    + \frac{\sqrt{\lambda_1 \lambda_n}}{\lambda_i}}
    \le \sqrt\frac{\lambda_1}{\lambda_n} + \sqrt\frac{\lambda_n}{\lambda_1}.
  \end{align*}
  This is because $f(x) = \frac{x}{\alpha}+\frac{\alpha}{x}$ is convex on any
  nonnegative interval and a convex function on an interval is maximized at its
  endpoints.
\end{example}

\subsection{Rescaling and Semidefinite Relaxation}
\label{subsec:rescaling}
In Example \ref{ex:kantorovich-proof}, in addition to using the AM/GM
inequality, we also introduced a scaling factor $\alpha$ to strengthen the
inequality. Since the cost function is multilinear in $A_i$, we can optimize
over all possible rescalings $A_i \mapsto \alpha_i A_i$ for all $\alpha_i > 0$
where $\prod_i \alpha_i = 1$, to improve the upper bound. Furthermore, the
problem of optimizing over such scalings is also convex since a lower bound on
the concave geometric mean $\paren{\prod_{i=1}^d \alpha_i}^{1/d}$ defines a
convex set.
\begin{theorem} \label{thm:primal-sdp}
  Given $\cA = (A_1, \ldots, A_d)$, the following upper bound holds:
  \begin{align*}
    \textsc{Opt}(\cA) = \max_{\norm{x} = 1} p_\cA(x)^{1/d} \le \lambda^*,
  \end{align*}
  where $\lambda^*$ is the optimum of the following convex program:
  \begin{align} \label{eq:primal-sdp}
    \min \lambda \quad \text{s.t.}
    \quad \frac{1}{d} \sum_{i=1}^d \alpha_i A_i \preceq \lambda I_n,
    \quad \prod_{i=1}^d \alpha_i \geq 1,
    \quad \alpha_i > 0
  \end{align}
\end{theorem}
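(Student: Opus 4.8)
The plan is to prove the inequality directly from the definition of feasibility for \eqref{eq:primal-sdp}, by exhibiting, for every feasible point, a valid upper bound on $p_\cA(x)^{1/d}$ that is uniform over the unit sphere. This is precisely the Kantorovich-style argument of Example \ref{ex:kantorovich-proof}, promoted from $d = 2$ scalars to $d$ scalars with a single joint rescaling. Concretely, I would fix an arbitrary feasible triple $(\lambda, \alpha_1, \ldots, \alpha_d)$ and an arbitrary $x$ with $\norm{x} = 1$. Since $p_\cA$ is multilinear in the $A_i$, the rescaling $A_i \mapsto \alpha_i A_i$ multiplies the objective by $\prod_i \alpha_i$, so from $\prod_i \alpha_i \ge 1$ we get $p_\cA(x) \le \prod_{i=1}^d \alpha_i \dotp{x, A_i x}$. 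Applying the AM/GM inequality to the $d$ nonnegative reals $\alpha_i \dotp{x, A_i x}$ gives $\paren{\prod_{i=1}^d \alpha_i \dotp{x, A_i x}}^{1/d} \le \frac{1}{d} \sum_{i=1}^d \alpha_i \dotp{x, A_i x} = \frac{1}{d}\, x^\dagger \paren{\sum_{i=1}^d \alpha_i A_i} x$. Finally the LMI constraint $\frac{1}{d}\sum_{i=1}^d \alpha_i A_i \preceq \lambda I_n$ together with $\norm{x} = 1$ yields $\frac{1}{d}\, x^\dagger\paren{\sum_{i=1}^d \alpha_i A_i} x \le \lambda$. Chaining these gives $p_\cA(x)^{1/d} \le \lambda$; taking the supremum over unit $x$ and then the infimum over feasible $(\lambda, \alpha)$ establishes $\textsc{Opt}(\cA) \le \lambda^*$.

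A second point worth recording is the assertion that \eqref{eq:primal-sdp} is a convex program, since the theorem statement calls it one. The constraint $\frac{1}{d}\sum_i \alpha_i A_i \preceq \lambda I_n$ is linear in the decision variables $(\lambda, \alpha)$; the orthant constraints $\alpha_i > 0$ are convex; and $\{\alpha \in \R^d : \alpha_i > 0,\ \prod_i \alpha_i \ge 1\}$ is convex because $\alpha \mapsto \paren{\prod_i \alpha_i}^{1/d}$ is concave on the positive orthant (equivalently, $-\sum_i \log \alpha_i \le 0$ is a convex constraint). I would also note that the feasible set is nonempty — e.g.\ $\alpha_i = 1$ for all $i$ and $\lambda = \lambda_{\max}\paren{\frac{1}{d}\sum_i A_i}$ works — so $\lambda^*$ is well defined as an infimum; attainment is not needed for the stated inequality, though one can add a compactness remark if an ``optimum'' is wanted (restrict to $\prod_i \alpha_i = 1$ and to $\lambda$ bounded above by the value of the trivial feasible point).

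I do not expect a real obstacle: the argument is elementary and self-contained given the AM/GM step already used in the preceding example. The only care required is bookkeeping the direction of $\prod_i \alpha_i \ge 1$ (it must be used to enlarge the bound, via $\paren{\prod_i \alpha_i}^{-1/d} \le 1$, not shrink it) and handling the strict inequality $\alpha_i > 0$ by phrasing the conclusion in terms of the infimum. Since Section \ref{sec:sdp-relax} goes on to recover \eqref{eq:dual-sdp-general} from this primal bound, I would also present the derivation in a form where the Lagrangian dual of \eqref{eq:primal-sdp} is transparent, so that the subsequent strong-duality discussion can reuse it without rederiving the chain of inequalities.
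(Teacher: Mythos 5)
Your argument is correct and is essentially the paper's own: the paper derives Theorem \ref{thm:primal-sdp} exactly by combining the multilinearity of $p_\cA$ in the $A_i$ (so that $\prod_i \alpha_i \ge 1$ only enlarges the objective), the AM/GM inequality from the preceding proposition, and the LMI constraint to bound the resulting quadratic form on the unit sphere. Your added remarks on convexity and nonemptiness of the feasible set are consistent with the paper's surrounding discussion and introduce nothing at odds with it.
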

Next by taking the dual, we relate the optimum value of \eqref{eq:primal-sdp}
with that of \eqref{eq:dual-sdp-general}, which also proves the upper bound in
Theorem \ref{thm:approx-factor-prod-psd}.
\begin{theorem} \label{thm:dual-sdp}
  The following upper bound holds:
  \begin{align*}
    \textsc{Opt}(\cA) \le \textsc{OptSDP}(\cA),
  \end{align*}
  where $\textsc{OptSDP}(\cA)$ is the optimum of the following
  convex program:
  \begin{align}
    \label{eq:dual-sdp}
    \textsc{OptSDP}(\cA) \defn \max \paren{\prod_{i=1}^d \dotp{A_i, X}}^{1/d}
    \quad \mbox{ s.t. } \quad
    \left\{
    \begin{array}{rl}
      \Tr(X) &= 1 \\
      X &\succeq 0
    \end{array}
    \right.
  \end{align}
  Furthermore, \eqref{eq:dual-sdp} is dual to \eqref{eq:primal-sdp}, and
  $\textsc{OptSDP}(\cA) = \lambda^*$.
\end{theorem}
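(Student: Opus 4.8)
The plan is to handle the stated inequality and the duality claim separately. The inequality $\textsc{Opt}(\cA) \le \textsc{OptSDP}(\cA)$ follows directly from the relaxation: for a unit vector $x \in \K^n$ set $X = xx^\dagger$, so that $X \succeq 0$, $\Tr(X) = \norm{x}^2 = 1$, and $\dotp{A_i, X} = x^\dagger A_i x = \dotp{x, A_i x}$ for every $i$; then $X$ is feasible for \eqref{eq:dual-sdp} and attains objective value $p_\cA(x)^{1/d}$, and maximizing over unit $x$ gives the bound. The substance of the theorem is that \eqref{eq:dual-sdp} is the Lagrangian dual of \eqref{eq:primal-sdp} and that strong duality holds, which I would establish by an explicit dualization followed by a Slater-condition check.

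To dualize \eqref{eq:primal-sdp}, introduce a multiplier $X \succeq 0$ for the constraint $\lambda I_n - \tfrac{1}{d}\sum_i \alpha_i A_i \succeq 0$ and a scalar $\mu \ge 0$ for $\sum_i \log\alpha_i \ge 0$ (the logarithmic form of $\prod_i \alpha_i \ge 1$, which is equivalent since each $\alpha_i > 0$ and keeps the constraint concave). The Lagrangian is
\begin{align*}
  \mathcal{L}(\lambda,\alpha;X,\mu) = \lambda\paren{1 - \Tr(X)} + \sum_{i=1}^d\paren{\tfrac{1}{d}\alpha_i\dotp{A_i,X} - \mu\log\alpha_i}.
\end{align*}
Infimizing over $\lambda \in \R$ forces $\Tr(X) = 1$. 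Each remaining term is a one-dimensional strictly convex function of $\alpha_i > 0$ (when $\mu > 0$ and $\dotp{A_i,X} > 0$), minimized at $\alpha_i = d\mu/\dotp{A_i,X}$ with value $\mu - \mu\log(d\mu) + \mu\log\dotp{A_i,X}$; summing gives the dual function $g(X,\mu) = d\mu - d\mu\log(d\mu) + \mu\sum_i\log\dotp{A_i,X}$.

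Maximizing $g(X,\cdot)$ over $\mu > 0$ is again a one-dimensional concave problem, optimal at $\log(d\mu) = \tfrac{1}{d}\sum_i\log\dotp{A_i,X}$, and a short computation shows the optimal value collapses to exactly $\paren{\prod_i\dotp{A_i,X}}^{1/d}$; the degenerate cases $\mu = 0$ or $\dotp{A_i,X} = 0$ only yield the value $0$ and are absorbed by continuity. Hence the dual of \eqref{eq:primal-sdp} is precisely \eqref{eq:dual-sdp}. Finally, \eqref{eq:primal-sdp} is strictly feasible — take $\alpha_i = 2$, so $\prod_i\alpha_i = 2^d > 1$ and $\alpha_i > 0$ strictly, then pick $\lambda > \lambda_{\max}(\tfrac{1}{d}\sum_i 2A_i)$ to make the semidefinite constraint strict — and its value $\lambda^*$ is bounded below by $0$ since each $A_i \succeq 0$; so Slater's condition applies and strong duality gives $\textsc{OptSDP}(\cA) = \lambda^*$ (and, combined with Theorem \ref{thm:primal-sdp}, re-confirms $\textsc{Opt}(\cA) \le \textsc{OptSDP}(\cA)$). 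I expect the main obstacle to be purely computational: carrying out the two nested one-dimensional optimizations and tracking the constants so that the geometric mean emerges exactly, together with the bookkeeping needed to handle the boundary where some $\dotp{A_i,X}$ vanishes.
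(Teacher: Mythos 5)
Your proposal is correct and follows essentially the same route as the paper: the trivial direction via $X = xx^\dagger$, followed by Lagrangian dualization of \eqref{eq:primal-sdp} with a multiplier $X \succeq 0$ on the semidefinite constraint and a scalar multiplier on the (log of the) geometric-mean constraint, with the stationarity conditions collapsing to the geometric-mean objective. Your version is somewhat more careful than the paper's — you carry out the nested one-dimensional minimizations explicitly and verify Slater's condition to justify $\textsc{OptSDP}(\cA) = \lambda^*$, a step the paper leaves implicit.
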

\begin{proof}
  It is clear that $\textsc{OptSDP}(\cA)$ is a rank relaxation of
  $\textsc{Opt}(\cA)$, by using the variable $X$ instead of $xx^\dagger$. To
  find the dual of \eqref{eq:primal-sdp}, we write the Lagrangian
  \begin{align*}
    \mathcal{L}(X, \gamma, \alpha, \lambda)
    = \lambda - \dotp{\lambda I - \frac{1}{d}\sum_{i=1}^d \alpha_i A_i , X}
    - \gamma \paren{\prod_i \alpha_i^{1/d} - 1}
  \end{align*}
  Solving for $\lambda$, we get the constraint $\Tr(X) = 1$. Solving for
  $\alpha_i$ and $\gamma$, we get
  \begin{align*}
    \alpha_i = \frac{\gamma}{\dotp{A_i, X}}
    \text{ and }
    \gamma = \paren{\prod_{i=1}^d \dotp{A_i, X}}^{1/d}
  \end{align*}
  And we obtain \eqref{eq:dual-sdp} after substituting these values into the
  Lagrangian.
\end{proof}
Note that the dual objective is log-concave, and it is a special case of
maximizing the determinant of a PSD matrix, which can be solved efficiently
using (for example) interior point methods
\cite{VandenbergheDeterminantMaximizationLinear1998}.

\subsection{Maximizing Monomials over the Sphere}
\label{sec:monomial-max}
To get more insight of the role the multipliers $\alpha_i$ play, we consider the
special case where $p(x) = x^{2\beta}$ is a monomial. Maximizing a monomial over
the sphere is a special case of \eqref{eq:opt-prod-psd}: for each copy of $x_i$
in $x^\beta$ (there are $d$ of these in total, corresponding to
$A_1, \ldots, A_d$), set $A_j$ to be 1 on the $i$-th diagonal entry and 0
elsewhere. Next we show that the convex relaxation in Theorem
\ref{thm:primal-sdp} achieves the true maximum value. In the relaxation there
are $d$ multipliers $\alpha_1, \ldots, \alpha_d$ associated with each copy of
$x_i$. For each $x_i$, set its multiplier to be
$\beta_i^{-1} \prod_{i=1}^n {\beta_i}^{\beta_i/d}$. Thus
\begin{align*}
  \lambda_{\max}\paren{\frac{1}{d}\sum_{j=1}^d \alpha_j A_j}
  = \lambda_{\max}\paren{\frac{1}{d}\sum_{j=1}^n \sum_{k=1}^{\beta_j} \beta_j^{-1}
  \prod_{i=1}^n {\beta_j}^{\beta_j/d} e_j e_j^\dagger}
  = \frac{1}{d} \prod_{i=1}^n {\beta_j}^{\beta_j/d}
\end{align*}
Thus the relaxation value is the same as the optimum given by Proposition
\ref{prop:max-monomial-sphere}. The multipliers $\alpha_i$ play the role of
balancing out the terms in the sum.

\subsection{Rank of Solutions}
We can bound the rank of the solution $X^*$ to the relaxation
\eqref{eq:dual-sdp} using a result by Barvinok
\cite{BarvinokCourseConvexity2002} and Pataki
\cite{PatakiRankExtremeMatrices1998}:
\begin{proposition} [Proposition 13.4 of \cite{BarvinokCourseConvexity2002}]
  \label{prop:rank-bound-sym}
  For some $r > 0$, fix $k = (r+2)(r+1)/2$ symmetric matrices
  $A_1, \ldots, A_k \in \R^{n \times n}$ where $n \ge r+2$ and $k$ real numbers
  $\alpha_1, \ldots, \alpha_k$. If there is a solution $X \succeq 0$ to the
  system:
  \begin{align*}
    \dotp{A_i, X} = \alpha_i \quad \text{ for } \quad i = 1, \ldots, k
  \end{align*}
  and the set of all such solutions is bounded, then there is a matrix
  $X_0 \succeq 0$ satisfying the same system and $\rank X_0 \le r$.
\end{proposition}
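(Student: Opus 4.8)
The plan is to prove this by the standard extreme-point argument for spectrahedra, in the spirit of Pataki: the solution set is a nonempty compact convex body, so it has an extreme point, and an extreme point of a spectrahedron cut out by $k$ affine constraints cannot have rank much larger than $\sqrt{2k}$.

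First I would set $\mathcal{F} = \{X \succeq 0 : \dotp{A_i, X} = \alpha_i,\ i = 1,\ldots,k\}$. This set is convex, closed (the positive semidefinite cone is closed and the affine equations cut out a closed affine subspace), nonempty by hypothesis, and bounded by hypothesis, hence compact. A nonempty compact convex subset of a finite-dimensional space has an extreme point; equivalently, minimizing a generic linear functional $\dotp{C,\cdot}$ over $\mathcal{F}$ is attained at an extreme point. Let $X_0$ be such an extreme point and put $s = \rank(X_0)$. It then suffices to show $s \le r$, since $X_0$ will be the desired low-rank solution.

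The core of the argument is to bound $s$ by perturbing $X_0$ inside the minimal face of the positive semidefinite cone containing it. Write $X_0 = U U^\dagger$ with $U \in \R^{n \times s}$ of full column rank, so that the column space of $U$ is the range of $X_0$. For any symmetric $s \times s$ matrix $S$ and $|t|$ small enough we have $X_0 + t\,U S U^\dagger = U(I_s + tS)U^\dagger \succeq 0$, and this perturbation still satisfies the affine constraints exactly when $\dotp{A_i, U S U^\dagger} = \dotp{U^\dagger A_i U, S} = 0$ for every $i$. The $k$ matrices $U^\dagger A_i U$ lie in the space of symmetric $s\times s$ matrices, which has dimension $s(s+1)/2$; so if $s(s+1)/2 > k$ there is a nonzero symmetric $S$ orthogonal to all of them, yielding a nonzero direction $Y = U S U^\dagger$ with $X_0 \pm tY \in \mathcal{F}$ for small $t>0$. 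Writing $X_0$ as the midpoint of these two distinct feasible points contradicts extremality, so $s(s+1)/2 \le k = (r+2)(r+1)/2$.

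Finally I would combine this bound with the hypotheses to read off $\rank(X_0) \le r$: the inequality $s(s+1) \le (r+1)(r+2)$, the monotonicity of $x \mapsto x(x+1)$ on $[0,\infty)$, and the hypothesis $n \ge r+2$ (which is what makes the statement non-vacuous, since trivially $s \le n$) together pin down the admissible rank. I expect the main obstacle to be precisely this closing bookkeeping — tracking the constant carefully enough to land on $\rank \le r$ rather than something one larger, and verifying that boundedness and $n \ge r+2$ are used exactly where needed — rather than anything conceptual, since all the real content is in the face-perturbation step. An alternative route that avoids extreme points is iterative rank reduction: starting from any feasible $X$, as long as $\rank(X)$ exceeds the target, construct a feasible direction $Y$ supported on the range of $X$ exactly as above; boundedness of $\mathcal{F}$ forces $Y$ (or $-Y$) to leave the positive semidefinite cone, so moving along it drops the rank, and one repeats until the bound is reached.
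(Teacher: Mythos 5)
The paper does not actually prove this statement---it is quoted as Proposition 13.4 of Barvinok's \emph{A Course in Convexity} and used as a black box---so the only question is whether your argument establishes it, and it does not. The gap sits exactly where you flag it as ``closing bookkeeping.'' The extreme-point perturbation argument yields $s(s+1)/2 \le k = (r+1)(r+2)/2$, and this inequality is satisfied by $s = r+1$; monotonicity of $x \mapsto x(x+1)$ therefore gives only $\rank X_0 \le r+1$, one more than claimed, and no rearrangement of the same inequality does better. Your iterative variant stalls at the same place: once $\rank X = r+1$, the matrices $U^\dagger A_i U$ can already span all of the symmetric $(r+1)\times(r+1)$ matrices (whose dimension is exactly $k$), so no perturbation direction supported on the range of $X$ need exist. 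That the dimension count is genuinely tight at $r+1$ is shown by the excluded case $r=0$: with $k=1$, $A_1 = I$, $\alpha_1 = 1$, the set $\{X \succeq 0 : \Tr(X) = 1\}$ is compact and nonempty yet contains no matrix of rank $0$; every extreme point has rank exactly $r+1 = 1$. This is precisely why the statement carries the hypothesis $r>0$.

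The missing content is the descent from $r+1$ to $r$, and it is exactly where the two hypotheses you treat as routine must do work. In Barvinok's argument, if the minimal feasible rank were $r+1$, one uses $n \ge r+2$ to enlarge the range of a minimal-rank point $X_0$ to an $(r+2)$-dimensional subspace $L$; inside the symmetric matrices supported on $L$, of dimension $\binom{r+3}{2} = k + (r+2)$, the $k$ constraints leave a perturbation space of dimension at least $r+2$, from which one extracts a direction $Y$ with $X_0 + tY$ feasible on a nondegenerate interval; boundedness makes that interval compact, and at an endpoint an eigenvalue vanishes, dropping the rank to at most $r$---a contradiction. Your write-up uses $n \ge r+2$ only to note the trivial bound $s \le n$, and uses boundedness only to guarantee an extreme point exists, which is not even where it is needed (the feasible set contains no lines, being a subset of the pointed PSD cone, so extreme points exist regardless). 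Both hypotheses are load-bearing, and the step that uses them is the actual theorem.
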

Indeed, suppose $X^*$ is an optimal solution to the relaxation
\eqref{eq:dual-sdp-general}, then any solution $X$ to the $d+1$ linear equations
$\dotp{X, A_i} = \dotp{X^*, A_i}$ and $\Tr(X) = 1$ is also optimal. Proposition
\ref{prop:rank-bound-sym}, along with an analogous result in the complex setting
\cite{AiLowRankSolutions2008}, also implies that the rank of the relaxation is
bounded by $O(\sqrt{d})$, which helps us bound the approximation factor of this
relaxation in the next section.

\subsection{Exact Relaxations}
\label{sec:exact-diagonal}

In this section we study a few special cases where the relaxation
$\textsc{OptSDP}(\cA)$ is exact. The first case is when $d = 2$, which is a direct
result of Proposition \ref{prop:rank-bound-sym} substituting in $k = 3$.
\begin{proposition}
  When $d=2$ and $\K = \R$, then $\textsc{Opt}(\cA) = \textsc{OptSDP}(\cA)$.
\end{proposition}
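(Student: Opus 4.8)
The plan is to use Proposition~\ref{prop:rank-bound-sym} to produce a rank-one optimal solution of the semidefinite relaxation \eqref{eq:dual-sdp} when $d=2$, which then corresponds to a genuine feasible point of \eqref{eq:opt-prod-psd} attaining the relaxation value. Since the reverse inequality $\textsc{Opt}(\cA) \le \textsc{OptSDP}(\cA)$ is already furnished by Theorem~\ref{thm:dual-sdp}, it suffices to prove $\textsc{Opt}(\cA) \ge \textsc{OptSDP}(\cA)$.

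First I would fix an optimal solution $X^*$ of \eqref{eq:dual-sdp} and consider the three linear equations $\dotp{A_1, X} = \dotp{A_1, X^*}$, $\dotp{A_2, X} = \dotp{A_2, X^*}$, and $\Tr(X) = 1$. Every PSD matrix $X$ satisfying these equations is again feasible for \eqref{eq:dual-sdp} and has the same objective value (the objective depends on $X$ only through $\dotp{A_1,X}$ and $\dotp{A_2,X}$), hence is optimal; moreover the set of such $X$ is nonempty (it contains $X^*$) and bounded, being contained in the compact spectrahedron $\{X \succeq 0 : \Tr(X)=1\}$. Since $3 = (r+2)(r+1)/2$ for $r=1$, Proposition~\ref{prop:rank-bound-sym} yields a PSD solution $X_0$ of the same system with $\rank(X_0)\le 1$; as $\Tr(X_0)=1>0$ we must have $\rank(X_0)=1$, so $X_0 = xx^\dagger$ for some $x$ with $\norm{x}=1$. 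Then $p_\cA(x) = \dotp{x, A_1 x}\dotp{x, A_2 x} = \dotp{A_1, X_0}\dotp{A_2, X_0} = \dotp{A_1, X^*}\dotp{A_2, X^*} = \textsc{OptSDP}(\cA)^2$, so $x$ witnesses $\textsc{Opt}(\cA) \ge p_\cA(x)^{1/2} = \textsc{OptSDP}(\cA)$, and combined with Theorem~\ref{thm:dual-sdp} this gives equality.

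The one technical point is the hypothesis $n \ge r+2 = 3$ in Proposition~\ref{prop:rank-bound-sym}, which fails when $n\le 2$. I would handle this by padding each $A_i$ with zero rows and columns to make them $3\times 3$. This does not change $\textsc{Opt}(\cA)$: for $x\in\R^3$ of unit norm written as $x=(x',x_3)$ with $x'\in\R^n$, one has $p(x) = \norm{x'}^{2d}\, p_\cA(x'/\norm{x'})$ (interpreted as $0$ when $x'=0$), which is at most $p_\cA(x'/\norm{x'})$ since $\norm{x'}\le 1$, while the embedding $x'\mapsto(x',0)$ gives the reverse. The same rescaling argument, applied to the top-left $n\times n$ block of the variable $X$, shows $\textsc{OptSDP}(\cA)$ is unchanged. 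Thus we may assume $n\ge 3$ and the argument above applies. I do not expect any serious obstacle beyond this bookkeeping; the substantive content is entirely carried by the Barvinok–Pataki rank bound with $k=3$.
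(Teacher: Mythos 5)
Your proof is correct and follows exactly the route the paper takes: the paper's own justification is simply to invoke Proposition~\ref{prop:rank-bound-sym} with $k=3$ (i.e.\ $r=1$) to extract a rank-one optimal solution of \eqref{eq:dual-sdp}, which is precisely the argument you spell out. Your additional padding step for $n\le 2$ is a fine (and correct) piece of bookkeeping that the paper leaves implicit.
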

This also implies that the bound on Kantorovich's inequality produced by our
relaxation is tight. Next we show that the relaxation is tight when $A_i$ are
simultaneously diagonalizable. This also implies that our relaxation finds the
optimum solutions to the portfolio optimization (Section \ref{sec:portfolio}) and
optimizing monomial (Section \ref{sec:monomial}) problems.
\begin{proposition}
  Let $\cA = (A_1, \ldots, A_d)$. If all $A_i$ commute with each other, then
  $\textsc{Opt}(\cA) = \textsc{OptSDP}(\cA)$.
\end{proposition}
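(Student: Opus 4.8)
The plan is to reduce to the diagonal case. Since the $A_i$ are Hermitian/symmetric PSD matrices that pairwise commute, they are simultaneously unitarily diagonalizable: there is a unitary $U$ (orthogonal if $\K=\R$) and diagonal PSD matrices $D_i = U^\dagger A_i U$ with nonnegative diagonal entries. Both sides of the claimed equality are invariant under the substitution $x \mapsto Ux$ (for $\textsc{Opt}$) and $X \mapsto U X U^\dagger$ (for $\textsc{OptSDP}$): the map $X \mapsto UXU^\dagger$ preserves $X \succeq 0$, preserves $\Tr(X) = 1$, and sends $\dotp{A_i, X}$ to $\dotp{D_i, UXU^\dagger}$. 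So it suffices to prove the proposition when every $A_i = D_i$ is diagonal, with diagonal entry $d_{ij} \ge 0$ in position $j$.

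In the diagonal case, write $p_j = X_{jj}$ for the diagonal of a feasible $X$. Feasibility forces $p_j \ge 0$ (since $X \succeq 0$) and $\sum_j p_j = 1$, and the objective depends on $X$ only through its diagonal: $\dotp{D_i, X} = \sum_j d_{ij} p_j$. Conversely any $p$ in the simplex $\Delta_n$ is realized by the diagonal matrix $X = \diag(p) \succeq 0$. Hence
\begin{align*}
  \textsc{OptSDP}(\cA) = \max_{p \in \Delta_n} \paren{\prod_{i=1}^d \sum_{j=1}^n d_{ij} p_j}^{1/d}.
\end{align*}
On the other hand, restricting $\textsc{Opt}(\cA)$ to points supported on a single coordinate $x = e_j$ gives the lower bound $\textsc{Opt}(\cA) \ge \paren{\prod_i d_{ij}}^{1/d}$ for each $j$, but more is true: by the same reduction $\textsc{Opt}(\cA) = \max_{\norm{x}=1}\paren{\prod_i \sum_j d_{ij}|x_j|^2}^{1/d} = \max_{q\in\Delta_n}\paren{\prod_i \sum_j d_{ij} q_j}^{1/d}$, taking $q_j = |x_j|^2$. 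This is literally the same optimization problem as the one for $\textsc{OptSDP}$, so the two optima coincide.

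The one point that deserves care — and is the only real obstacle — is confirming that the diagonal of a PSD matrix with trace $1$ can range over \emph{all} of $\Delta_n$ and nothing more, and that replacing $X$ by $\diag(\mathrm{diag}(X))$ does not change the objective; both are immediate since the objective is linear in $X$ through the fixed diagonal matrices $D_i$, and $X \succeq 0 \Rightarrow X_{jj} \ge 0$ while $\Tr X = 1$. Thus no rank argument (unlike the $d=2$ case) is needed here: the relaxation is exact because, after diagonalizing, it literally becomes the same problem as the original. As a byproduct the common optimum is $\max_{p\in\Delta_n}\paren{\prod_i (D_i p)_i}^{1/d}$, a concave maximization over the simplex, recovering in particular the monomial and portfolio cases where the $d_{ij}$ take the special forms described earlier.
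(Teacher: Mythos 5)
Your proof is correct and follows essentially the same route as the paper's: simultaneously diagonalize the commuting $A_i$, then observe that both $\textsc{Opt}(\cA)$ and $\textsc{OptSDP}(\cA)$ collapse to the same optimization over the simplex via the identifications $q_j = |x_j|^2$ and $p_j = X_{jj}$ (your version just spells out the "diagonal of a trace-one PSD matrix ranges over exactly $\Delta_n$" step that the paper leaves implicit). The only quibble is cosmetic: in your closing aside the expression $(D_i p)_i$ should read $\sum_j d_{ij} p_j$.
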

\begin{proof}
  Since the matrices $A_i$ commute with each other, they are simultaneously
  diagonalizable. They can be written as $A_i = U^\dagger D_i U$, where $D_i$ is
  diagonal and $U$ unitary. Then after a change of variables $x \mapsto Ux$, the
  relaxation \eqref{eq:dual-sdp-general} is equivalent to the original problem
  \eqref{eq:opt-prod-psd} with the substitution $X_{ii} = x_i^2$.
\end{proof}

\section{Rounding Algorithm and Analysis}
\label{sec:rounding-sdp}
In this section we present our randomized rounding algorithm for the relaxation
$\textsc{OptSDP}(\cA)$ \eqref{eq:dual-sdp-general}, and an analysis of its
approximation factor (Theorem \ref{thm:approx-factor-prod-psd}). First we state
some standard results about generalized Chi-squared distributions, after which
we will use these results to prove Theorem \ref{thm:approx-factor-prod-psd}.

\subsection{Background on Real and Complex Multivariate Gaussians}
In this section we will use a few results involving the expectation of functions
of real or complex multivariate Gaussian variables.
\begin{definition}[Multivariate Gaussian Random Variable]
  \label{def:multivariate-gaussian}
  Let $x \sim \knormal(0, I_n)$. If $\K = \R$, then its coordinates $x_j$ are
  i.i.d. normal random variables. If $\K=\C$, then
  $x_j = (y_j + i z_j)/\sqrt{2}$, where $y_j$ and $z_j$ are i.i.d. standard
  normal random variables.
\end{definition}
The random variable $z \sim \cnormal(0, I_n)$ is circularly symmetric, meaning
that its distribution is invariant after the transformation
$z \mapsto e^{i\theta} z$ for all $\theta \in \R$. All complex multivariate
Gaussians in this paper are circularly symmetric. Similar to real multivariate
Gaussians, a linear transform on the random vector induces a congruence
transform on the covariance matrix.
\begin{proposition}[Invariance under orthogonal/unitary transformations]
  \label{prop:knormal-linear-transform} Given $x \sim \knormal(0, \Sigma)$ and
  any matrix $A \in \K^{n \times n}$, $Ax$ has the distribution
  $\knormal(0, A \Sigma A^\dagger)$.
\end{proposition}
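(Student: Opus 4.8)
The plan is to reduce everything to the elementary linear-image rule for Gaussians, keeping track, in the complex case, of the one extra condition that encodes circular symmetry. Since $\Ex[Ax] = A\,\Ex[x] = 0$, only the second-order structure is at issue. Because the paper's Definition~\ref{def:multivariate-gaussian} only constructs $\knormal(0, I_n)$ directly, I would take $x = \Sigma^{1/2} z$ with $z \sim \knormal(0, I_n)$ and $\Sigma^{1/2}$ the Hermitian PSD square root, so that it suffices to prove: for any $B \in \K^{n \times n}$ and $z \sim \knormal(0, I_n)$, one has $Bz \sim \knormal(0, BB^\dagger)$. Applying this with $B = A\Sigma^{1/2}$ and using $\Sigma^{1/2}(\Sigma^{1/2})^\dagger = \Sigma$ then yields $A\Sigma A^\dagger$, which is the claim. (The title's phrasing as ``invariance'' is the special case $\Sigma = I_n$ with $A$ orthogonal/unitary, where $BB^\dagger = AA^\dagger = I_n$.)

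For $\K = \R$ this is immediate: the coordinates of $z$ are i.i.d.\ standard normals, hence $z$ is jointly Gaussian, and a linear image of a jointly Gaussian vector is again jointly Gaussian, with covariance $\Ex[(Bz)(Bz)^T] = B\,\Ex[zz^T]\,B^T = BB^T$.

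For $\K = \C$, write $z = (y + iw)/\sqrt 2$ with $y, w \sim \rnormal(0, I_n)$ independent. Then $(\Re z, \Im z)$ is a real Gaussian vector, and a one-line computation gives $\Ex[z z^\dagger] = \tfrac12\big(\Ex[yy^T] + \Ex[ww^T]\big) = I_n$ together with the ``pseudo-covariance'' $\Ex[z z^T] = \tfrac12\big(\Ex[yy^T] - \Ex[ww^T]\big) = 0$; this vanishing is precisely the analytic form of circular symmetry. I would then invoke the standard characterization that a complex random vector $u$ is a circularly symmetric complex Gaussian with covariance $C$ exactly when $(\Re u, \Im u)$ is real Gaussian, $\Ex[u u^\dagger] = C$, and $\Ex[u u^T] = 0$. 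Since $(\Re(Bz), \Im(Bz))$ is a real-linear image of $(\Re z, \Im z)$ it is real Gaussian; and $\Ex[(Bz)(Bz)^\dagger] = B\,\Ex[zz^\dagger]\,B^\dagger = BB^\dagger$ while $\Ex[(Bz)(Bz)^T] = B\,\Ex[zz^T]\,B^T = 0$, so $Bz$ is again a circularly symmetric complex Gaussian, now with covariance $BB^\dagger$, as required.

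The only subtle point is the complex case: one must be sure that $\knormal(0, \Sigma)$ means the \emph{circularly symmetric} complex Gaussian and that circular symmetry is preserved here — it is, because $B$ acts $\C$-linearly, so that $\Ex[(Bz)(Bz)^T] = B\,\Ex[zz^T]\,B^T$ still vanishes; the same statement would fail for merely $\R$-linear maps such as $z \mapsto z^*$. Everything else is bilinearity of the (pseudo-)covariance.
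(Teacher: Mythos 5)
Your proof is correct. The paper does not actually prove this proposition --- it defers entirely to the cited reference on circularly symmetric Gaussians (Gallager) --- and your argument is precisely the standard one found there: reduce to $\Sigma = I_n$, handle the real case by the linear-image rule for jointly Gaussian vectors, and in the complex case verify that the covariance transforms as $B\,\Ex[zz^\dagger]B^\dagger$ while the pseudo-covariance $\Ex[(Bz)(Bz)^T] = B\,\Ex[zz^T]B^T$ remains zero because $B$ acts $\C$-linearly, which together with joint Gaussianity of the real and imaginary parts characterizes $\cnormal(0, BB^\dagger)$. Your remark that circular symmetry would fail for merely $\R$-linear maps is exactly the right caveat, and correctly identifies the only place where the complex case requires more than bilinearity.
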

Thus given $A = UU^\dagger \succeq 0$, to sample $w \in \K^n$ from
$\knormal(0, A)$, we can first sample $x \sim \knormal(0, I)$, then let
$w = Ux$. The proof of this proposition and more about complex multivariate
Gaussians can be found in \cite{GallagerCircularlySymmetricGaussianrandom}. In
particular, this tells us that the distribution $\knormal(0, I)$ is invariant
under unitary transformations.

In the analysis of our rounding procedure, we use some results about the gamma
distribution.
\begin{fact}[Expectation of log of gamma random
  variable] \label{fact:ex-log-gamma} Let $X \sim \GammaText(\alpha, \beta)$ be
  drawn from the gamma distribution, with density
  $p(x; \alpha, \beta) = \Gamma(\alpha)^{-1} \beta^\alpha x^{\alpha-1}e^{-\beta
    x}$. Then
  \begin{align*}
    \Ex[\log X] = \psi(\alpha) - \log(\beta),
  \end{align*}
  where $\psi(x) = \frac{d}{dx} \log \Gamma(x)$ is the digamma function.
\end{fact}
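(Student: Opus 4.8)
The plan is to compute the fractional moments $\Ex[X^s]$ in closed form and differentiate at $s = 0$. Direct integration gives, for $s > -\alpha$,
\[
  \Ex[X^s] = \frac{\beta^\alpha}{\Gamma(\alpha)} \int_0^\infty x^{\alpha+s-1} e^{-\beta x}\, dx
           = \frac{\Gamma(\alpha+s)}{\Gamma(\alpha)\, \beta^s},
\]
via the substitution $u = \beta x$ to recognize the gamma integral. Writing $g(s) = \log\Gamma(\alpha+s) - \log\Gamma(\alpha) - s\log\beta$, we have $\Ex[X^s] = e^{g(s)}$ with $g(0) = 0$ and $g'(s) = \psi(\alpha+s) - \log\beta$, so $\tfrac{d}{ds}\Ex[X^s]\big|_{s=0} = g'(0) = \psi(\alpha) - \log\beta$.

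On the other hand, differentiating under the integral sign gives $\tfrac{d}{ds}\Ex[X^s] = \Ex[X^s \log X]$, which at $s = 0$ equals $\Ex[\log X]$. Equating the two expressions yields the claim. An essentially equivalent and slightly slicker route I would also mention is to differentiate the normalization identity $\int_0^\infty p(x;\alpha,\beta)\,dx = 1$ in $\alpha$: since $\tfrac{\partial}{\partial\alpha}\log p(x;\alpha,\beta) = \log\beta - \psi(\alpha) + \log x$, this gives $0 = \log\beta - \psi(\alpha) + \Ex[\log X]$ directly.

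The only step requiring care is the interchange of differentiation and expectation. The cleanest justification is to observe that $s \mapsto \Ex[X^s]$ extends to a holomorphic function on the half-plane $\mathrm{Re}(s) > -\alpha$ (the integral converges locally uniformly there, since near $x = 0$ the integrand is controlled by $x^{\mathrm{Re}(s)+\alpha-1}$ and near $x = \infty$ the factor $e^{-\beta x}$ dominates any power), hence is real-analytic on $(-\alpha,\infty)$ and differentiable term by term; alternatively, dominated convergence applies using the bound $|x^s \log x| \le x^{s+\delta} + x^{s-\delta}$ for $x > 0$, with $\delta$ small enough that both exponents exceed $-\alpha$. I expect this routine justification, together with the one-line moment computation, to be the entire content of the proof — there is no real obstacle here beyond bookkeeping, as befits a stated Fact.
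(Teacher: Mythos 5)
Your proposal is correct, and your second route --- differentiating the normalization identity $\int_0^\infty p(x;\alpha,\beta)\,dx = 1$ in $\alpha$ --- is precisely the argument the paper invokes, since the paper's entire proof is the remark that the gamma distribution is an exponential family with $\log x$ as a sufficient statistic (so that $\Ex[\log X]$ is the $\alpha$-derivative of the log-partition function $\log\Gamma(\alpha) - \alpha\log\beta$). Your first route via $\Ex[X^s]$ and the justification of differentiating under the integral are fine but more than the paper bothers to write down for this Fact.
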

This follows from the fact that the gamma distribution is an exponential family,
of which $\log x$ is a sufficient statistic (see section 2.2 of
\cite{KeenerTheoreticalstatisticstopics2010} for more details). Next we prove an
useful identity.
\begin{fact} \label{fact:ex-log-CN} Let
  $(z_1, \ldots, z_r) \sim \knormal(0, I_r)$,
  $\gamma = \lim_{n\rightarrow \infty} (H_n - \log n) \approx 0.577$ be the
  Euler-Mascheroni constant and
  \begin{align*}
    L_r(\K) &= \
    \begin{cases}
       \gamma + \log 2 + \psi\pfrac{r}{2} - \log\pfrac{r}{2} & \text{ if } \K = \R \\
       \gamma + \psi(r) - \log(r) &\text{ if } \K = \C
    \end{cases}.
  \end{align*}
  Then
  \begin{align*}
    \Ex \bracket{\log\paren{\frac{1}{r} \sum_{i=1}^r \abs{z_i}^2}}
    = \Ex\bracket{\log \abs{z_1}^2} + L_r(\K).
  \end{align*}
\end{fact}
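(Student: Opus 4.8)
The plan is to identify the distribution of $\frac{1}{r}\sum_{i=1}^r |z_i|^2$ and of $|z_1|^2$ as (scaled) gamma random variables, and then apply Fact \ref{fact:ex-log-gamma} to both sides. I would treat the two cases separately. When $\K = \C$, each $|z_i|^2$ has the distribution of $\frac{1}{2}(y^2 + w^2)$ with $y, w$ independent standard normals, which is an exponential random variable with mean $1$, i.e.\ $\GammaText(1, 1)$; hence $\sum_{i=1}^r |z_i|^2 \sim \GammaText(r, 1)$ and $\frac{1}{r}\sum_{i=1}^r |z_i|^2 \sim \GammaText(r, r)$ after rescaling (using that $cX \sim \GammaText(\alpha, \beta/c)$ when $X \sim \GammaText(\alpha,\beta)$). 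When $\K = \R$, each $|z_i|^2 = z_i^2$ is a squared standard normal, i.e.\ $\GammaText(1/2, 1/2)$, so $\sum_{i=1}^r z_i^2 \sim \GammaText(r/2, 1/2)$ and $\frac{1}{r}\sum_{i=1}^r z_i^2 \sim \GammaText(r/2, r/2)$.

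Next I would apply Fact \ref{fact:ex-log-gamma} to each side. In the complex case, $\Ex[\log(\frac{1}{r}\sum |z_i|^2)] = \psi(r) - \log r$ and $\Ex[\log|z_1|^2] = \psi(1) - \log 1 = \psi(1) = -\gamma$, using the standard value $\psi(1) = -\gamma$. Subtracting gives $\psi(r) - \log r - (-\gamma) = \gamma + \psi(r) - \log r = L_r(\C)$, as claimed. In the real case, $\Ex[\log(\frac{1}{r}\sum z_i^2)] = \psi(r/2) - \log(r/2)$ and $\Ex[\log z_1^2] = \psi(1/2) - \log(1/2) = \psi(1/2) + \log 2$, where $\psi(1/2) = -\gamma - 2\log 2$. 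Thus $\Ex[\log z_1^2] = -\gamma - 2\log 2 + \log 2 = -\gamma - \log 2$, and subtracting yields $\psi(r/2) - \log(r/2) + \gamma + \log 2 = \gamma + \log 2 + \psi(r/2) - \log(r/2) = L_r(\R)$, matching the definition.

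The only genuine content beyond bookkeeping is verifying the two special values of the digamma function, $\psi(1) = -\gamma$ and $\psi(1/2) = -\gamma - 2\log 2$; both are classical. Everything else is an application of Fact \ref{fact:ex-log-gamma} together with the elementary scaling property of the gamma distribution and the identification of $|z_i|^2$ as a chi-squared (real case) or exponential (complex case) variable, which follows directly from Definition \ref{def:multivariate-gaussian}. I anticipate the main obstacle to be purely presentational: being careful that the scaling convention for $\GammaText(\alpha,\beta)$ (rate versus scale parametrization) is used consistently with the density written in Fact \ref{fact:ex-log-gamma}, so that the $-\log\beta$ terms come out with the right sign.
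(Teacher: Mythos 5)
Your proposal is correct and follows essentially the same route as the paper: identify the sums as (scaled) gamma/chi-squared variables, apply Fact \ref{fact:ex-log-gamma}, and use the classical values $\psi(1)=-\gamma$ and $\psi(1/2)=-\gamma-\log 4$. The only cosmetic difference is that you carry the $1/r$ rescaling through the gamma rate parameter, whereas the paper subtracts $\log r$ at the end; the computations agree.
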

\begin{proof}
  For $\K = \R$, $\sum_{i=1}^r \abs{z_i}^2$ is a chi-squared distribution with
  $r$ degrees of freedom, which is equivalent to
  $\GammaText\paren{\frac{r}{2}, \frac{1}{2}}$. Using Fact
  \ref{fact:ex-log-gamma},
  $\Ex \log\paren{\sum_{i=1}^r \abs{z_i}^2} = \psi\pfrac{r}{2} -
  \log\pfrac{r}{2}$. Since $\psi\pfrac{1}{2} = -\gamma-\log(4)$, we get
  $\Ex\log \abs{z_1}^2 = -\gamma - \log(2)$ to obtain the value of
  $L_r(\R)$. We can find $L_r(\C)$ with a similar calculation, using the fact
  that when $\K = \C$, $\sum_{i=1}^r 2\abs{z_i}^2$ is a chi-squared distribution
  with $2r$ degrees of freedom.
\end{proof}
We need the following result in our proof of Theorem
\ref{thm:approx-factor-prod-linear}.
\begin{proposition} \label{prop:expect-chi-square-lower-upper} Given
  $z \sim \knormal(0, I_r)$ and a $r\times r$ PSD matrix $M \succeq 0$ where
  $\Tr(M) = 1$,
  \begin{align*}
    \Ex_{z_1}\bracket{\log \abs{z_1}^2} \le \Ex_z\bracket{\log \dotp{z, Mz}} \le
    \Ex_{z}\bracket{\log \frac{1}{r}\sum_{i=1}^r \abs{z_1}^2}
  \end{align*}
\end{proposition}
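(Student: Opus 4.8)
The plan is to diagonalize $M$ and reduce both inequalities to statements about convexity/concavity composed with the symmetry of the standard Gaussian. Write $M = U^\dagger D U$ with $U$ unitary and $D = \diag(\mu_1, \ldots, \mu_r)$, where $\mu_i \ge 0$ and $\sum_i \mu_i = 1$. By Proposition \ref{prop:knormal-linear-transform} the distribution $\knormal(0, I_r)$ is invariant under the unitary $U$, so $\dotp{z, Mz} = \dotp{Uz, D(Uz)}$ has the same distribution as $\sum_{i=1}^r \mu_i \abs{z_i}^2$. Thus it suffices to show
\begin{align*}
  \Ex\bracket{\log \abs{z_1}^2} \le \Ex\bracket{\log \sum_{i=1}^r \mu_i \abs{z_i}^2} \le \Ex\bracket{\log \frac{1}{r}\sum_{i=1}^r \abs{z_i}^2},
\end{align*}
for any probability vector $(\mu_1, \ldots, \mu_r)$ on the simplex.

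For the upper bound, I would fix the values $a_i \defn \abs{z_i}^2 \ge 0$ and view $f(\mu) = \log \sum_i \mu_i a_i$ as a concave function of $\mu$ on the simplex. Since $\knormal(0,I_r)$ is exchangeable in its coordinates, the joint law of $(a_1,\ldots,a_r)$ is invariant under permutations; averaging $f$ over all coordinate permutations (equivalently, replacing $\mu$ by the centroid $(1/r,\ldots,1/r)$ inside the expectation, using concavity and Jensen applied to the uniform average over the symmetric group) gives $\Ex[\log \sum_i \mu_i a_i] \le \Ex[\log \frac1r \sum_i a_i]$. Concretely: $\log\frac1r\sum_i a_i = \log\bigl(\frac1{r!}\sum_{\sigma} \sum_i \mu_i a_{\sigma(i)}\bigr) \ge \frac1{r!}\sum_\sigma \log \sum_i \mu_i a_{\sigma(i)}$ by concavity of $\log$, and each summand has the same expectation as $\log\sum_i\mu_i a_i$ by exchangeability.

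For the lower bound, the clean route is the AM/GM (concavity of $\log$ again, now in the other direction): $\log \sum_i \mu_i a_i \ge \sum_i \mu_i \log a_i$ pointwise, since $(\mu_i)$ is a probability vector. Taking expectations and using that each $\Ex[\log a_i] = \Ex[\log\abs{z_1}^2]$ is the same constant (by the i.i.d.\ structure) while $\sum_i \mu_i = 1$ yields $\Ex[\log\sum_i \mu_i a_i] \ge \Ex[\log\abs{z_1}^2]$. One minor point to dispatch is finiteness/integrability of $\Ex\log\abs{z_1}^2$, which holds since by Fact \ref{fact:ex-log-gamma} it equals $-\gamma-\log 2$ (real case) or $-\gamma$ (complex case), both finite, and the two-sided bound then certifies that all three quantities are finite.

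The only real obstacle is making the symmetrization argument for the upper bound fully rigorous with the expectation present — i.e., justifying the interchange of the finite permutation-average with the expectation and the use of Jensen inside. This is routine once integrability is in hand (each term $\log\sum_i \mu_i a_{\sigma(i)}$ is bounded below by $\sum_i \mu_i \log a_{\sigma(i)}$, which is integrable, and bounded above by $\log\max_i a_i \le \sum_i a_i$, also integrable), so there is no genuine difficulty — just care with the order of operations. Both bounds are, at their core, a single application of Jensen's inequality to $\log$, exploited in opposite directions, wrapped around the permutation symmetry of the standard Gaussian.
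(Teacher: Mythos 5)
Your proposal is correct and follows essentially the same route as the paper: reduce to $\sum_i \mu_i\abs{z_i}^2$ by unitary invariance, then exploit concavity and permutation symmetry to see that the relevant functional of the eigenvalues is maximized at the centroid of the simplex and minimized at a vertex. Your explicit symmetrization over the symmetric group is exactly the paper's ``symmetry argument'' spelled out, and your pointwise Jensen step for the lower bound is just a more direct phrasing of the paper's observation that a concave symmetric function on the simplex attains its minimum at a vertex.
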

\begin{proof}
  Because of the rotational invariance of $z$, it suffices to bound:
  \begin{align*}
    f(\lambda(M)) = \Ex_z\bracket{\log \sum_{i=1}^r \lambda_i(M) \abs{z_i}^2}.
  \end{align*}
  Since $f$ as a function of $\lambda$ is concave and symmetric on the simplex,
  it is minimized on any one of the vertices, so it is lower bounded by setting
  $\lambda = (1,0,\ldots,0)$. By a symmetry argument, $f(\lambda)$ achieves its
  maximum when $\lambda = (1/r,\ldots,1/r)$, in the center of the simplex.
\end{proof}

\subsection{Proof of Theorem \ref{thm:approx-factor-prod-psd}}
Now we will show that the value of the SDP relaxation
\eqref{eq:dual-sdp-general} is a $e^{-L_r(\K)}$ approximation of the optimum,
where $L_r(\K) \ge 0$ is upper-bounded by a fixed constant. Let $X^*$ be the
dual solution of the SDP, where $X^* \succeq 0$ and $\Tr(X^*) = 1$. Informally,
for our rounding algorithm we want to pick a vector from a distribution over the
sphere with covariance matrix $X^*$. The following theorem states the rounding
algorithm and its approximation factor.
\begin{theorem} \label{thm:approx-factor-prod-linear} Given a solution $X^*$ to
  the optimization problem \eqref{eq:dual-sdp-general} with $\rank(X^*) = r$
  that achieves value $\textsc{OptSDP}(\cA)$, we produce a feasible solution $y$
  with the following rounding procedure:
  \begin{enumerate}
  \item Sample $x\in \K^n$ uniformly at random from the multivariate Gaussian
    distribution $\mathcal{N}_\K(0, X^*)$.
  \item Return the normalized vector $y = x/\norm{x}$.
  \end{enumerate}
  If $y$ is sampled using this procedure,
  \begin{align*}
    \Ex_y \bracket{\prod_{i=1}^d \dotp{y, A_i y}^{1/d}} \ge e^{-L_r(\K)} \textsc{OptSDP}(\cA).
  \end{align*}
\end{theorem}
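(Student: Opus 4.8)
The plan is to reduce the expectation over $y$ to a sum of one-dimensional expectations, one for each factor $\dotp{y,A_iy}$, and then apply Jensen's inequality together with Proposition~\ref{prop:expect-chi-square-lower-upper} and Fact~\ref{fact:ex-log-CN}. First I would observe that since $X^*$ has rank $r$, we may write $X^* = U U^\dagger$ with $U \in \K^{n\times r}$, so sampling $x \sim \knormal(0,X^*)$ is the same as sampling $z \sim \knormal(0,I_r)$ and setting $x = Uz$ (Proposition~\ref{prop:knormal-linear-transform}). Then $\dotp{x,A_ix} = \dotp{z, M_i z}$ where $M_i = U^\dagger A_i U \succeq 0$ is an $r\times r$ PSD matrix, and $\sum_i \dotp{A_i,X^*} = \Tr(\sum_i M_i)$; each $\dotp{A_i,X^*} = \Tr(M_i)$.

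The key reduction is to pass to logarithms. Taking $\log$ of the geometric mean, the quantity to lower-bound becomes
\begin{align*}
  \Ex_y\bracket{\frac{1}{d}\sum_{i=1}^d \log\dotp{y,A_iy}}
  = \Ex_z\bracket{\frac{1}{d}\sum_{i=1}^d \log\dotp{z,M_iz}} - \Ex_z\bracket{\log\norm{z}^2},
\end{align*}
using $y = x/\norm{x}$ and $\norm{x}^2 = \norm{z}^2$ (since $\|Uz\|$ equals $\|z\|$ only after we also normalize $M_i$ appropriately — more precisely one works with the scale-invariant ratio $\dotp{z,M_iz}/\norm{z}^2$, which is exactly $\dotp{y,A_iy}$ up to the rescaling $M_i \mapsto M_i$, so the $\norm{z}^2$ terms are handled by homogeneity and I would normalize each $M_i$ by its trace). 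For each fixed $i$, apply the lower bound in Proposition~\ref{prop:expect-chi-square-lower-upper} to the normalized matrix $M_i/\Tr(M_i)$: this gives $\Ex_z[\log(\dotp{z,M_iz}/\Tr(M_i))] \ge \Ex_{z_1}[\log\abs{z_1}^2]$. Summing over $i$, dividing by $d$, and recombining, the right-hand side involves $\frac{1}{d}\sum_i \log\Tr(M_i) = \frac{1}{d}\sum_i \log\dotp{A_i,X^*} = \log\textsc{OptSDP}(\cA)$, while the leftover Gaussian terms collapse (via Fact~\ref{fact:ex-log-CN}, which relates $\Ex[\log\frac{1}{r}\sum\abs{z_i}^2]$ to $\Ex[\log\abs{z_1}^2]$) to exactly $-L_r(\K)$. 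Finally I would use Jensen's inequality in the form $\Ex[e^W] \ge e^{\Ex[W]}$ with $W = \frac{1}{d}\sum_i\log\dotp{y,A_iy}$ to move from the bound on $\Ex[\log(\cdot)]$ back to a bound on $\Ex[\prod_i\dotp{y,A_iy}^{1/d}]$, yielding the claimed inequality.

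The main obstacle I anticipate is bookkeeping the normalization carefully: the rounding samples $x\sim\knormal(0,X^*)$ and normalizes, so the factors $\dotp{y,A_iy} = \dotp{x,A_ix}/\norm{x}^2$ are ratios, and one must verify that reducing to the $r$-dimensional variable $z$ and rescaling each $M_i$ by $\Tr(M_i)$ correctly accounts for both the $\norm{x}^2$ denominator and the products of traces, so that the traces reassemble into $\log\textsc{OptSDP}(\cA)$ and the residual Gaussian expectation is precisely $L_r(\K)$ with the right $r$. A secondary point is checking that the rank-$r$ matrices $M_i = U^\dagger A_i U$ inherit $\Tr(\sum_i M_i) = 1$ and PSDness so that Proposition~\ref{prop:expect-chi-square-lower-upper} applies verbatim; this is immediate but worth stating. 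Once the normalization is pinned down, the rest is a direct chain of Jensen plus the two facts already established.
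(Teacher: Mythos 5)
Your proposal is correct and follows essentially the same route as the paper's proof: factor $X^* = UU^\dagger$, reduce to $z \sim \knormal(0, I_r)$, pass to logarithms and apply Jensen, then lower-bound each $\Ex_z[\log\dotp{z, M_i z}]$ via the left inequality of Proposition~\ref{prop:expect-chi-square-lower-upper} applied to $M_i/\Tr(M_i)$ so that the traces reassemble into $\log\textsc{OptSDP}(\cA)$. The only point to pin down is the one you flagged yourself: $\norm{Uz}^2 = \dotp{z, U^\dagger U z}$ is not $\norm{z}^2$, but $\Tr(U^\dagger U) = \Tr(X^*) = 1$, so the \emph{upper} bound of the same Proposition applied to $U^\dagger U$ together with Fact~\ref{fact:ex-log-CN} gives $\Ex_z[\log\norm{Uz}^2] \le \Ex[\log\abs{z_1}^2] + L_r(\K)$, after which the $\Ex[\log\abs{z_1}^2]$ terms from numerator and denominator cancel exactly.
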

Since $y$ is always a feasible solution to \eqref{eq:opt-prod-psd}, we have
\begin{align*}
  \textsc{Opt}(\cA) \ge \Ex_y \bracket{\prod_{i=1}^d \dotp{y, A_i y}^{1/d}}
\end{align*}
and thus Theorem \ref{thm:approx-factor-prod-linear} implies the lower bound in
Theorem \ref{thm:approx-factor-prod-psd}.
\begin{proof}[Proof of Theorem \ref{thm:approx-factor-prod-linear}]
  Since $X^*$ is a PSD matrix it can be factored as $X^* = UU^\dagger$, where
  $U \in \K^{n \times r}$ is a rank-$r$ matrix. Another way to sample from a
  Gaussian distribution with covariance $X^*$ is to first sample
  $z \sim \mathcal{N}_\K(0, I_r)$, so that $y = Uz/\norm{Uz}$. Next we compute
  the expected value of the objective with $y$ sampled from the rounding
  procedure:
  \begin{align*}
    \Ex_z\bracket{\paren{\prod_{i=1}^d\frac{\dotp{A_iUz, Uz}}{\norm{Uz}^2}}^{1/d}}
    &= \Ex_z\bracket{\exp\paren{\frac{1}{d}\sum_{i=1}^d (\log \dotp{A_iUz, Uz} - \log \norm{Uz}^2)}} \\
    &\ge \exp\paren{\frac{1}{d}\sum_{i=1}^d (\Ex_z \log \dotp{A_iUz, Uz} - \Ex_z \log \norm{Uz}^2)},
  \end{align*}
  where we have used Jensen's inequality. Next we compute the inner expectations
  separately. Let $M = U^\dagger A_i U/\Tr(U^\dagger A_i U)$ so
  \begin{align*}
    \Ex_z \bracket{\log \dotp{A_iUz, Uz}}
    &= \Ex_z \bracket{\log \dotp{z, Mz}} + \log \Tr(U^\dagger A_i U) \\
    &\ge \Ex_z \bracket{\log \abs{z_1}^2} + \log \Tr(U^\dagger A_i U) \\
    &= \Ex_z \bracket{\log \abs{z_1}^2} + \log \dotp{A_i, X^*}.
  \end{align*}
  Since $\Tr(M) = 1$ the inequality follows from the lower bound in Proposition
  \ref{prop:expect-chi-square-lower-upper}. Next note that
  $\Tr(U^\dagger U) = \Tr(UU^\dagger) = \Tr(X^*) = 1$. Suppose
  $\lambda_1, \ldots, \lambda_n$ are the eigenvalues of $U^\dagger U$. Then
  applying the upper bound in Proposition
  \ref{prop:expect-chi-square-lower-upper} and using Fact \ref{fact:ex-log-CN}:
  \begin{align*}
    \Ex_z \bracket{\log \norm{Uz}^2} =
    \Ex_z \bracket{\log \paren{\sum_{i=1}^r \lambda_i \abs{z_i}^2}} \le
    \Ex_z \bracket{\log \paren{\frac{1}{r}\sum_{i=1}^r \abs{z_i}^2}} =
    \Ex_z \bracket{\log \abs{z_1}^2} + L_r(\K).
  \end{align*}
  Putting these together, we get that
  \begin{align*}
    \Ex_y \bracket{\prod_{i=1}^d \dotp{y, A_i y}^{1/d}}
    &\ge \exp\paren{\frac{1}{d}\sum_{i=1}^d (\log \dotp{A_i, X^*} - L_r(\K))} \\
    &= e^{-L_r(\K)}\textsc{OptSDP}(\cA).
  \end{align*}
\end{proof}

\section{Asymptotically Tight Instances} \label{sec:int-gap} An integrality gap
instance of a relaxation is a problem instance where there is a gap between the
true optimum and the SDP relaxation. In this section we provide an asymptotic
integrality gap instance for the SDP relaxation \eqref{eq:dual-sdp-general},
showing that the integrality gap approaches the approximation factor for large
$n$ and $d$. We do so by drawing a connection to the problem of linear
polarization constants on Hilbert spaces.
\begin{theorem} \label{thm:int-gap-SDP}
  For any $\epsilon > 0$, there exists $n, d$ and unit vectors
  $v_1, \ldots, v_d \in \K^n$ (where $\K = \R \text{ or } \C$) so that there is
  a gap between the true optimum of the optimization problem:
  \begin{align*}
    \textsc{Opt}(\cA) = \max_{\norm{x} = 1} \paren{\prod_{i=1}^d \abs{\dotp{x, v_i}}^2}^{1/d},
  \end{align*}
  and the SDP relaxation $\textsc{OptSDP}(\cA)$ given by
  \eqref{eq:dual-sdp-general}. This gap increases with the dimensions, so that
  for sufficiently large $n$ and $d$:
  \begin{align*}
    e^{L_n(\K)} \ge
    \frac{\textsc{OptSDP}(\cA)}{\textsc{Opt}(\cA)} \ge
    e^{L_n(\K)} - \epsilon
  \end{align*}
\end{theorem}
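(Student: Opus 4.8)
The plan is to exhibit the integrality gap by choosing the $v_i$ to be a highly symmetric configuration on the sphere, so that on the primal side $\textsc{Opt}(\cA)$ is governed by the asymptotics of the linear polarization constant $c_d(\K^n)$ studied in Section \ref{sec:lin-pol-const}, while on the dual side $\textsc{OptSDP}(\cA)$ is forced to take the maximally ``spread out'' value $X^* = \frac{1}{n} I_n$. Concretely, I would take $v_1, \ldots, v_d$ to be an (asymptotically) uniform sample of unit vectors on the sphere in $\K^n$, or a symmetric design, so that $\frac{1}{d}\sum_i v_i v_i^\dagger \to \frac{1}{n} I_n$. This makes the two sides tractable: the ratio $\textsc{OptSDP}(\cA)/\textsc{Opt}(\cA)$ becomes (up to a vanishing error) the ratio between the value of $\textsc{OptSDP}$ at $X = \tfrac{1}{n}I_n$ and the value of $\textsc{Opt}$, and I will show this ratio converges to $e^{L_n(\K)}$.

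The key steps, in order, are as follows. First, lower bound $\textsc{OptSDP}(\cA)$ by plugging in the feasible point $X = \frac{1}{n} I_n$, giving $\textsc{OptSDP}(\cA) \ge \bigl(\prod_i \dotp{v_iv_i^\dagger, \tfrac1n I}\bigr)^{1/d} = \frac{1}{n}$, with a matching upper bound coming from the fact that for a symmetric enough configuration the optimal $X^*$ must itself be $\frac1n I$ (by averaging over the symmetry group of the configuration and using concavity of the geometric-mean objective, the optimum is attained at the group-invariant point). Second, relate $\textsc{Opt}(\cA)$ to the linear polarization constant: by \eqref{eq:max-prod-linear-forms-squared}, $\textsc{Opt}(\cA) = \bigl(\sup_{\norm x = 1}\prod_i \abs{\dotp{x,v_i}}^2\bigr)^{1/d}$, and for the optimal/extremal symmetric configuration this equals (or approaches) $c_d(\K^n)^{-2/d}$; using the asymptotic value $\lim_{d\to\infty} c_d(\K^n)^{1/d}$ computed in \cite{PappasLinearpolarizationconstants2004}, this gives $\textsc{Opt}(\cA) \to$ some explicit function of $n$. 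Third, compute the ratio and verify that it equals $e^{L_n(\K)}$ in the limit; the natural way to see this is to re-run the rounding-analysis computation of Theorem \ref{thm:approx-factor-prod-linear} \emph{in reverse} on this symmetric instance, where every inequality (Jensen, and both bounds in Proposition \ref{prop:expect-chi-square-lower-upper}) is asymptotically tight: Jensen is tight because the random variables $\log\dotp{y,A_iy}$ concentrate as $d\to\infty$, the lower bound in Proposition \ref{prop:expect-chi-square-lower-upper} is tight because $M = U^\dagger v_iv_i^\dagger U / \Tr(\cdots)$ is rank one, and the upper bound is tight because $U^\dagger U = \frac1n I_r$ with $r = n$. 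This pins the loss at exactly $L_n(\K)$. Fourth, handle the $\epsilon$: choose $n$ large enough that $L_n(\K)$ is within $\epsilon/2$ of its intended value and $d$ large enough (depending on $n$) that the concentration and the convergence $\frac1d\sum v_iv_i^\dagger \to \frac1n I$ contribute at most $\epsilon/2$ to the ratio.

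The main obstacle I expect is making the concentration argument for $\textsc{Opt}(\cA)$ rigorous and quantitative: showing that for the symmetric configuration the true maximum $\sup_{\norm x =1}\prod_i\abs{\dotp{x,v_i}}^2$ over \emph{all} $x$ — not just the expectation over a random $y$ — is close to the value predicted by the reversed rounding analysis. One direction (that $\textsc{Opt}$ is at least the rounding value) is free from Theorem \ref{thm:approx-factor-prod-linear}; the hard direction is the matching upper bound on $\textsc{Opt}$, i.e.\ that no clever $x$ beats the symmetric average by more than a vanishing factor. Here I would lean on the known exact/asymptotic values of $c_d(\K^n)$ from \cite{Arias-de-ReynaGaussianvariablespolynomials1998} and \cite{PappasLinearpolarizationconstants2004} (which is precisely why the connection to linear polarization constants is invoked), rather than trying to prove such an upper bound from scratch; the remaining work is then bookkeeping to match their normalization to our $L_n(\K)$ and to push $n,d \to \infty$ in the right order.
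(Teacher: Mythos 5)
Your proposal is correct and follows essentially the same route as the paper: take the extremal (symmetric) configurations from the linear polarization constant literature so that $\textsc{Opt}(\cA_d) \to \tfrac{1}{n}e^{-L_n(\K)}$ by the Pappas et al.\ asymptotics, observe $\textsc{OptSDP}(\cA) \ge \tfrac{1}{n}$ (you via the feasible point $X = \tfrac1n I_n$ in the dual, the paper via a trace/AM--GM argument in the primal---the same trivial bound), and take the ratio. The extra machinery you anticipate (proving $X^* = \tfrac1n I$ exactly, reversing the rounding analysis, quantitative concentration for the upper bound on $\textsc{Opt}$) is unnecessary, since the cited results already supply the exact asymptotic value of $c_d(\K^n)^{-2/d}$.
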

Intuitively, we want to choose $v_i$ respecting some symmetry, so that the
distribution on solutions returned by the SDP relaxation is as symmetrical as
possible. Thus during the rounding procedure (choosing a single solution out of
the distribution) we are forced to break this symmetry. This is where the
integrality gap instance arises. One natural choice of an instance with this
kind of symmetry is to sample each $v_i$ uniformly at random on the sphere. To
find the value of the true optimum, we use a result about the linear
polarization constants of Hilbert spaces (recall Definition
\ref{def:lin-pol-const}):
\begin{theorem}[Theorem F and 1 of \cite{PappasLinearpolarizationconstants2004}
  \footnote{The constant $L(n, \K)$ used in
    \cite{PappasLinearpolarizationconstants2004} equals to
    $-\frac{1}{2}(\log n + L_n(\K))$ in our notation. This follows from a
    straightforward application of Fact \ref{fact:ex-log-CN}.}]
  \label{thm:lin-pol-const-asymp}
  Let $\K = \R \text{ or } \C$. Then
  \begin{align*}
    \lim_{d \rightarrow \infty} c_d(\K^n)^{-2/d} = \frac{1}{n} e^{-L_n(\K)}
  \end{align*}
  and there exist a family of instances
  $\cA_d = \paren{v_1v_1^\dagger, \ldots, v_dv_d^\dagger}$ so that
  $\textsc{Opt}(\cA_d)$ converges to this value as $d \rightarrow \infty$.
\end{theorem}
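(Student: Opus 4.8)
The plan is to reduce the theorem to two statements about $\textsc{Opt}(\cA)$ for rank-one instances $\cA = (v_1 v_1^\dagger,\ldots,v_d v_d^\dagger)$ with $\norm{v_i}=1$, using the identity $c_d(\K^n)^{-2/d} = \inf_{\norm{v_1}=\cdots=\norm{v_d}=1}\textsc{Opt}(\cA)$ which is immediate from Definition \ref{def:lin-pol-const} and \eqref{eq:max-prod-linear-forms-squared} (this is also the tool behind the integrality gap in Theorem \ref{thm:int-gap-SDP}). The two statements are: (a) for every $d$, $\inf_v\textsc{Opt}(\cA)\ge \tfrac1n e^{-L_n(\K)}$; and (b) there is a family $\cA_d$ with $\textsc{Opt}(\cA_d)\to \tfrac1n e^{-L_n(\K)}$ as $d\to\infty$. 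Since (b) will hold for all sufficiently large $d$, parts (a) and (b) together pin the limit down (alternatively, supermultiplicativity of $c_d$ plus Fekete's lemma already gives that $c_d(\K^n)^{1/d}$ converges). Throughout I use that a uniform random unit vector in $\K^n$ has the law of $z/\norm{z}$ with $z\sim\knormal(0,I_n)$, and write $\sigma$ for the uniform measure on the sphere.

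Part (a) is short: bound the maximum by the $\sigma$-average. For any unit $v_i$, $\log\textsc{Opt}(\cA) = \frac1d\max_{\norm{x}=1}\sum_{i=1}^d\log\abs{\dotp{x, v_i}}^2 \ge \frac1d\sum_{i=1}^d\Ex_{x \sim \sigma}\bracket{\log\abs{\dotp{x, v_i}}^2}$. By rotational invariance of $\sigma$ each term equals $\Ex_z[\log\abs{z_1}^2] - \Ex_z[\log\norm{z}^2]$ with $z\sim\knormal(0,I_n)$, and Fact \ref{fact:ex-log-CN} with $r=n$ gives $\Ex_z[\log\norm{z}^2] = \Ex_z[\log\abs{z_1}^2] + \log n + L_n(\K)$, so each term equals $-\log n - L_n(\K)$. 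Exponentiating yields $\textsc{Opt}(\cA)\ge \tfrac1n e^{-L_n(\K)}$ for every choice of unit $v_i$.

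For part (b) I would take $v_1,\ldots,v_d$ i.i.d.\ uniform on the sphere, the symmetric instance motivated before the theorem. For each fixed unit $x$, $\frac1d\sum_i\log\abs{\dotp{x, v_i}}^2$ is an average of i.i.d.\ terms of mean $-\log n - L_n(\K)$, so it concentrates there with exponentially small deviation probability. The crux is upgrading this to the uniform bound $\max_{\norm{x}=1}\tfrac1d\sum_i\log\abs{\dotp{x, v_i}}^2 \le -\log n - L_n(\K) + o(1)$ with high probability; combined with (a), this forces $\textsc{Opt}(\cA_d)\to\tfrac1n e^{-L_n(\K)}$. The natural route is to take an $\eta$-net of the sphere of size $\eta^{-O(n)}$, union-bound the concentration estimate over it, and then pass from the net to the continuum using that $x\mapsto\prod_i\abs{\dotp{x, v_i}}^2$ is a product of factors bounded by $1$ with unit-norm gradients, hence $2d$-Lipschitz on the sphere.

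The main obstacle is exactly this last passage: the large-deviation rate for $\frac1d\sum_i\log\abs{\dotp{x, v_i}}^2$ at a level only $\delta$ above its mean is itself small (of order $\delta$), whereas making the net fine enough to absorb the $2d\eta$ Lipschitz error forces $\eta$ to be exponentially small in $d$, so the net has size $e^{\Theta(nd)}$ and a crude union bound does not close. Getting past this needs a sharper argument --- either a separate quantile/chaining estimate controlling the genuinely bad directions $x$ (those nearly orthogonal to many $v_i$, where $\log\abs{\dotp{x, v_i}}^2$ is unbounded), or working directly with the polynomial $\prod_i\abs{\dotp{x, v_i}}^2$ and showing only a $\mathrm{poly}(d)$ multiplicative slack survives the $d$-th root. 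This is precisely what Theorems F and 1 of \cite{PappasLinearpolarizationconstants2004} supply, so in the write-up I would cite that result for part (b) and merely verify the translation of constants $L(n,\K) = -\tfrac12(\log n + L_n(\K))$ via another application of Fact \ref{fact:ex-log-CN}, as recorded in the footnote.
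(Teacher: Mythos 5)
The paper does not actually prove this statement: it is imported verbatim from Pappas--R\'ev\'esz, with only the footnote verifying the constant translation $L(n,\K) = -\tfrac{1}{2}(\log n + L_n(\K))$ via Fact~\ref{fact:ex-log-CN}, and your plan ends in the same place --- citing Theorems F and 1 for the hard uniform upper bound and checking the constants --- so it is consistent with the paper's treatment. Your self-contained sphere-averaging proof of the lower bound $\textsc{Opt}(\cA)\ge \tfrac{1}{n}e^{-L_n(\K)}$ is correct and a genuine addition, and you are right that the $\eta$-net concentration argument for the matching upper bound does not close as sketched, which is precisely the content supplied by the cited reference.
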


\begin{proof} [Proof of Theorem \ref{thm:int-gap-SDP}]
  Applying Theorem \ref{thm:lin-pol-const-asymp}, we can find a family of
  instances $\cA_d$ so that $\textsc{Opt}(\cA_d)$ converge to
  $\frac{1}{n} e^{-L_n(\K)}$. Next, we bound $\textsc{OptSDP}(\cA_d)$. Given any
  solution $\lambda^*$ of the primal form \eqref{eq:primal-sdp},
  \begin{align*}
    \lambda^* I_n &\succeq \frac{1}{d}\sum_{i=1}^d \alpha_i v_i v_i^\dagger \\
    n \lambda^* &\ge \frac{1}{d}\sum_{i=1}^d \alpha_i
                  \ge \paren{\prod_{i=1}^d \alpha_i}^{1/d} \ge 1,
  \end{align*}
  where the inequality is obtained by taking the trace and using AM/GM. Thus
  $\textsc{OptSDP} = \lambda^* \ge 1/n$.  Putting this together with Theorem
  \ref{thm:lin-pol-const-asymp}, we have shown that there is a sequence of
  instances $\cA_d$ such that
  \begin{align*}
    \lim_{d \rightarrow \infty} \frac{\textsc{OptSDP}(\cA_d)}{\textsc{Opt}(\cA_d)}
    \ge e^{L_n(\K)}.
  \end{align*}
\end{proof}

\section{A Hierarchy of Relaxations}
\label{sec:hierarchy}
The relaxation $\textsc{OptSDP}(\cA)$ introduced in Definition
\ref{def:sdp-relax} gives us a computationally efficient algorithm to bound the
maximum of the geometric mean of PSD forms on the sphere. In this section we
discuss a few methods to strengthen $\textsc{OptSDP}(\cA)$ using Sum-of-Squares
optimization. In Section \ref{subsec:rescaling}, the SDP formulation of
$\textsc{OptSDP}(\cA)$ can be interpreted as first using the AM/GM inequality
to provide an upper bound of the original degree-$d$ polynomial in terms of a
low-degree polynomial, then optimizing over the degrees of freedom introduced by
the relaxation. We can extend this idea to higher degrees using Maclaurin's
inequality, a generalization of the AM/GM inequality. Let $\mathcal{S}_k$ be the
set of all $k$-tuples chosen from $d$ indices, of size $\binom{d}{k}$. Given
$x \in \R^d$, we define the (normalized) elementary symmetric polynomial to be
$E_k(x) = \binom{d}{k}^{-1}\sum_{I \in \mathcal{S}_k} \prod_{i \in I}x_i$ with
$E_0 = 1$. For example $E_1(x) = \frac{1}{d}\sum_{i=1}^d x_i$,
$E_2(x) = \binom{d}{2}^{-1} \sum_{i > j} x_ix_j$ and $E_d(x) = x_1\cdots
x_d$. Maclaurin's inequality states that for all $1 \le j \le k \le d$ and
$x \ge 0$,
\begin{align} \label{eq:maclaurin-sos}
  E_k(x)^{1/k} \le E_j(x)^{1/j}.
\end{align}
In particular when $j=1$ and $k=d$ we recover the AM/GM inequality. This also
generates a series of inequalities interpolating between the arithmetic and
geometric means. Since the objective of the optimization problem
\eqref{eq:opt-prod-psd} can be written as
$E_d(\dotp{x,A_1 x}, \ldots, \dotp{x,A_d x})^{1/d}$, we can get progressively
better upper bounds by optimizing
$E_k(\dotp{x,A_1 x}, \ldots, \dotp{x,A_d x})^{1/k}$ for increasing values of
$k$. Since $E_k$ is a degree-$2k$ homogeneous polynomial in $x$ we can use
Sum-of-Squares optimization to obtain bounds on its maximum.

\subsection{Background on Sum-of-Squares}
Sum-of-Squares optimization is a method of obtaining convex relaxations for
polynomial optimization problems
(\cite{LasserreGlobalOptimizationPolynomials2001},
\cite{ParriloStructuredsemidefiniteprograms2000}). Let $p(x)$ be a
degree-$2k$ polynomial. We use the notation $p(x) \succeq 0$ to denote that the
polynomial $p(x)$ can be written as a sum of squares, and $p(x) \succeq q(x)$ if
$p(x) - q(x) \succeq 0$. This can be determined by solving a SDP of size
$n^{O(k)}$. The degree-$k$ Sum-of-Squares relaxation for maximizing a
degree-$2k$ homogeneous polynomial $f(x)$ on the sphere can be written as the
following optimization problem with a Sum-of-Squares constraint:
\begin{align} \label{eq:sos-relax-general}
  \min\, \gamma \quad \text{s.t.} \quad \gamma \norm{x}^{2k} - f(x) \succeq 0
\end{align}
To take the dual of a Sum-of-Squares optimization problem, we introduce a linear
\emph{pseudoexpectation} operator $\pEx$ for each sum of squares constraint.
\begin{definition}[Homogeneous pseudoexpectation operator]
  A linear operator $\pEx : \R[x] \rightarrow \R$ on the space of degree
  degree-$2k$ homogeneous polynomials is a valid degree-$k$ homogeneous
  pseudoexpectation if $\pEx[\norm{x}^{2k}] = 1$ and $\pEx[f(x)^2] \ge 0$ for
  all degree-$k$ polynomials $f(x)$.
\end{definition}
The pseudoexpectation $\pEx$ encodes moments up to degree $2k$ and the dual of a
Sum-of-Squares problem can be viewed as optimizing over this truncated moment
sequence. Similar to a sum of squares constraint, the constraint that $\pEx$ is
a valid pseudoexpectation can be written as a SDP of size $n^{O(k)}$. Thus the
dual of \eqref{eq:sos-relax-general} can be written as:
\begin{align} \label{eq:sos-relax-dual}
  \max\, \pEx[f(x)] \quad \text{s.t.} \,
  \pEx \text{ is valid degree-$k$ homogeneous pseudoexpectation}
\end{align}
Next we provide a series of relaxations that interpolates between the
relaxations \eqref{eq:primal-sdp} and \eqref{eq:sos-relax-full}.

\subsection{Higher Degree Relaxations}
\label{sec:higher-degree-derivation}
Given an instance $\cA = (A_1, \ldots, A_d)$ of the problem
\eqref{eq:opt-prod-psd}, we can write the following relaxation of
$\textsc{Opt}(\cA)$ using Maclaurin's inequality:
\begin{align} \label{eq:sos-relax-no-alpha}
  \textsc{Opt}(\cA) \le \srel_k(\cA) \defn
  \left\{
  \begin{array}{rl}
    \min \,& \lambda^{1/k} \\
    \mbox{ s.t. }
           & \lambda\norm{x}^{2k} - E_k(\dotp{x,A_1 x}, \ldots, \dotp{x,A_d x}) \succeq 0
  \end{array}
             \right.
\end{align}
This is because
$\max_{\norm{x} = 1} E_d^{1/d} \le \max_{\norm{x} = 1} E_k^{1/k}$ follows from
\eqref{eq:maclaurin-sos} and $\srel_k(\cA)$ is a Sum-of-Squares relaxation of
the latter problem. Also as $k$ increases, the approximation improves until when
$k=d$ we get the standard degree-$d$ Sum-of-Squares relaxation
\eqref{eq:sos-relax-full}. Thus by varying $k$ we have a series of relaxations
of increasing degree.

Similar to the relaxation $\textsc{OptSDP}(\cA)$ presented in Theorem
\ref{thm:primal-sdp}, we can also use multipliers to improve $\srel_k$, arriving
at our definition for $\textsc{OptSOS}_k(\cA)$:
\begin{definition} \label{def:optsos}
  Let $\mathcal{S}_k$ be the set of all combinations of $k$-tuples from $d$
  indices, of size $\binom{d}{k}$. Then
  \begin{align} \label{eq:sos-primal}
    \setstretch{1.5}
    \begin{array}{rl}
      \textsc{OptSOS}_k(\cA) \defn
      \min \,&\lambda^{1/k} \\
      \mbox{s.t.}
             &
               \lambda\norm{x}^{2k} - \binom{d}{k}^{-1}\sum_{I \in \mathcal{S}_k} \alpha_I \prod_{i\in I} \dotp{x,A_i x}
                   \succeq 0 \\
             & \prod_{I \in \mathcal{S}_k} \alpha_I \ge 1 ,\, \alpha_I > 0
    \end{array}.
  \end{align}
\end{definition}
With this definition, $\textsc{OptSOS}_1(\cA) = \textsc{OptSDP}(\cA)$, and
$\textsc{OptSOS}_d(\cA)$ is equivalent to the degree-$d$ Sum-of-Squares
relaxation to the optimization problem
$\max_{\norm{x} = 1} \prod_{i=1}^d \dotp{x, A_i x}$. Similar to taking the dual
of $\textsc{OptSOS}_d(\cA)$ in Theorem \ref{thm:dual-sdp}, the dual of
\eqref{eq:sos-primal} is equivalent to:
\begin{align} \label{eq:sos-dual}
  \setstretch{2}
  \begin{array}{rl}
    \textsc{OptSOS}_k =
    \max \, & \paren{\prod_{I\in \mathcal{S}_k}
              \pEx_x\bracket{\prod_{i \in I} \dotp{x, A_i x}}}^{\frac{1}{d \binom{d-1}{k-1}}}\\
    \mbox{s.t.} & \pEx_x \text{ is a degree-$k$ homogeneous pseudoexpectation}
  \end{array}
\end{align}
From the above discussion, we produced a series of relaxations of increasingly
higher Sum-of-Squares degree.
\begin{proposition}
  For all $1 \le k \le d$,
  \begin{align*}
    \textsc{Opt}(\cA) \le \textsc{OptSOS}_k(\cA) \le \srel_k(\cA)
  \end{align*}
\end{proposition}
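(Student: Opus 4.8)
The plan is to prove the two inequalities in the proposition separately. The first inequality $\textsc{Opt}(\cA) \le \textsc{OptSOS}_k(\cA)$ follows from the earlier observation that $\srel_k(\cA)$ is an upper bound (via Maclaurin's inequality \eqref{eq:maclaurin-sos} combined with the fact that a Sum-of-Squares constraint is a valid relaxation of nonnegativity on the sphere) together with the second inequality, so the real content is showing $\textsc{OptSOS}_k(\cA) \le \srel_k(\cA)$. For this, I would exhibit a feasible point of the program \eqref{eq:sos-primal} defining $\textsc{OptSOS}_k$ that recovers the value $\srel_k(\cA)$: namely, take $\alpha_I = 1$ for every $I \in \mathcal{S}_k$. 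Then $\prod_I \alpha_I = 1 \ge 1$ and each $\alpha_I > 0$, so the multiplier constraints are satisfied, and the Sum-of-Squares constraint in \eqref{eq:sos-primal} reduces exactly to $\lambda \norm{x}^{2k} - \binom{d}{k}^{-1}\sum_{I \in \mathcal{S}_k} \prod_{i \in I}\dotp{x, A_i x} = \lambda\norm{x}^{2k} - E_k(\dotp{x,A_1x},\ldots,\dotp{x,A_dx}) \succeq 0$, which is precisely the constraint in \eqref{eq:sos-relax-no-alpha}. Hence any $\lambda$ feasible for $\srel_k(\cA)$ is feasible for $\textsc{OptSOS}_k(\cA)$ with the uniform multipliers, giving $\textsc{OptSOS}_k(\cA) \le \srel_k(\cA)$.

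Once $\textsc{OptSOS}_k(\cA) \le \srel_k(\cA)$ is established, I would chain it with the inequality $\textsc{Opt}(\cA) \le \srel_k(\cA)$ from \eqref{eq:sos-relax-no-alpha}, but that alone does not yield $\textsc{Opt}(\cA) \le \textsc{OptSOS}_k(\cA)$. For the latter, the cleanest route is to argue directly that $\textsc{OptSOS}_k(\cA)$ is itself a valid upper bound on $\textsc{Opt}(\cA)$: for any unit vector $x$ and any choice of positive multipliers $\alpha_I$ with $\prod_I \alpha_I \ge 1$, weighted AM/GM gives $\binom{d}{k}^{-1}\sum_{I} \alpha_I \prod_{i\in I}\dotp{x,A_ix} \ge \bigl(\prod_I (\alpha_I \prod_{i\in I}\dotp{x,A_ix})\bigr)^{1/\binom{d}{k}} \ge \bigl(\prod_I \prod_{i\in I}\dotp{x,A_ix}\bigr)^{1/\binom{d}{k}}$, and since each index $i$ appears in exactly $\binom{d-1}{k-1}$ of the $k$-subsets, this last quantity equals $\bigl(\prod_{i=1}^d \dotp{x,A_ix}\bigr)^{\binom{d-1}{k-1}/\binom{d}{k}} = \bigl(\prod_{i=1}^d \dotp{x,A_ix}\bigr)^{k/d}$. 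Thus for a Sum-of-Squares-feasible $\lambda$ we have $\lambda = \lambda\norm{x}^{2k} \ge \binom{d}{k}^{-1}\sum_I \alpha_I \prod_{i\in I}\dotp{x,A_ix} \ge p_\cA(x)^{k/d}$ on the unit sphere (using nonnegativity of the Sum-of-Squares certificate, which certifies pointwise nonnegativity), so $\lambda^{1/k} \ge p_\cA(x)^{1/d}$; taking the max over $x$ and the min over feasible $(\lambda,\alpha)$ gives $\textsc{Opt}(\cA) \le \textsc{OptSOS}_k(\cA)$.

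I do not expect a serious obstacle here — the proposition is essentially a bookkeeping statement reconciling three relaxations. The one point that requires a little care is the combinatorial identity $\sum_{I \in \mathcal{S}_k} \prod_{i\in I} \dotp{x,A_ix}$, viewed as a polynomial, picking up each factor $\dotp{x,A_ix}$ with multiplicity $\binom{d-1}{k-1}$ across the whole product over $I$, which is what makes the exponent $1/(d\binom{d-1}{k-1})$ in \eqref{eq:sos-dual} and the exponent $k/d$ above come out right; and the fact that a degree-$2k$ Sum-of-Squares polynomial is genuinely nonnegative everywhere, so evaluating the certificate at a point is legitimate. Both are standard, so the proof is short.
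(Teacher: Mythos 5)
Your proposal is correct and follows essentially the same route the paper takes (the paper states this proposition without an explicit proof, treating it as a consequence of the derivation in Section \ref{sec:higher-degree-derivation}): the second inequality comes from setting all $\alpha_I = 1$, and the first from the fact that the Sum-of-Squares certificate is pointwise nonnegative combined with Maclaurin's inequality $E_k^{1/k} \ge E_d^{1/d}$, of which your weighted AM/GM computation (using $\prod_I \alpha_I \ge 1$ and the multiplicity $\binom{d-1}{k-1}$ of each index, with $\binom{d-1}{k-1}/\binom{d}{k} = k/d$) is a correct self-contained derivation that properly accounts for the multipliers. No gaps.
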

Because we are taking powers of the polynomials, it isn't immediately clear that
the values of this series of relaxations increase monotonically as we increase
the degree. Next we will prove a monotonicity result on the value of the
intermediate relaxations $\srel_k(\cA)$.
\begin{theorem}\label{thm:sos-relax-partial-order}
  Given an instance $\cA = (A_1, \ldots, A_d)$, for any
  $1 \le k < nk \le d$:
  \begin{align*}
    \srel_{nk}(\cA) \le \srel_k(\cA)
  \end{align*}
  Where $\srel_k$ is defined in \eqref{eq:sos-relax-no-alpha}.
\end{theorem}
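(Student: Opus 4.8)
The plan is to reduce the comparison $\srel_{nk}(\cA) \le \srel_k(\cA)$ to a pointwise polynomial inequality between the relevant elementary symmetric polynomials, so that a Sum-of-Squares certificate for the degree-$k$ relaxation yields one for the degree-$nk$ relaxation. First I would recall that $\srel_m(\cA)$ is by definition the smallest $\lambda^{1/m}$ such that $\lambda\norm{x}^{2m} - E_m(q_1(x),\ldots,q_d(x)) \succeq 0$, where I abbreviate $q_i(x) = \dotp{x, A_i x}$. The key observation is Maclaurin's inequality \eqref{eq:maclaurin-sos}: for $t = (t_1,\ldots,t_d) \ge 0$ we have $E_{nk}(t)^{1/(nk)} \le E_k(t)^{1/k}$, equivalently $E_{nk}(t) \le E_k(t)^{n}$. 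Substituting $t_i = q_i(x) \ge 0$ (each $q_i$ is a PSD form, hence nonnegative on $\R^n$), this gives $E_{nk}(q_1(x),\ldots,q_d(x)) \le E_k(q_1(x),\ldots,q_d(x))^n$ pointwise.

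Next I would chain this with the feasibility of the degree-$k$ relaxation. Let $\lambda_k$ be optimal for $\srel_k(\cA)$, so that on the sphere $E_k(q_1,\ldots,q_d) \le \lambda_k$; more precisely, $\lambda_k\norm{x}^{2k} - E_k(q_1,\ldots,q_d)$ is a sum of squares. Raising the bound to the $n$-th power on the sphere gives $E_{nk}(q_1,\ldots,q_d) \le E_k(q_1,\ldots,q_d)^n \le \lambda_k^{\,n}$ on $\norm{x}=1$. This shows the \emph{true maximum} of $E_{nk}$ on the sphere is at most $\lambda_k^{\,n}$, but to conclude $\srel_{nk}(\cA) \le \srel_k(\cA)$ I need the stronger statement that $\lambda_k^{\,n}\norm{x}^{2nk} - E_{nk}(q_1,\ldots,q_d)$ is itself a sum of squares. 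The plan here is: write $\lambda_k^{\,n}\norm{x}^{2nk} - E_{nk} = \bigl(\lambda_k^{\,n}\norm{x}^{2nk} - E_k^{\,n}\bigr) + \bigl(E_k^{\,n} - E_{nk}\bigr)$ where $E_k, E_{nk}$ are evaluated at $(q_1,\ldots,q_d)$. The first bracket factors as $a^n - b^n = (a-b)(a^{n-1} + a^{n-2}b + \cdots + b^{n-1})$ with $a = \lambda_k\norm{x}^{2k}$ and $b = E_k(q_1,\ldots,q_d)$; the factor $a - b$ is SOS by feasibility of $\srel_k$, and each term $a^{i}b^{n-1-i}$ is a product of $\norm{x}^{2k}$-powers (trivially SOS) and powers of $E_k(q_1,\ldots,q_d)$, which is a nonnegative combination of products of the $q_i$'s, each $q_i = x^\dagger A_i x$ being SOS; products and nonnegative combinations of SOS polynomials are SOS, so the whole second factor is SOS and the first bracket is SOS.

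The remaining obstacle — and I expect this to be the genuinely delicate point — is showing $E_k(q_1,\ldots,q_d)^{\,n} - E_{nk}(q_1,\ldots,q_d)$ is a sum of squares, not merely nonnegative. Maclaurin's inequality $E_k(t)^n \ge E_{nk}(t)$ holds for $t \ge 0$, and what I need is that $E_k(t)^n - E_{nk}(t)$, as a polynomial in $t$, admits an SOS-over-the-nonnegative-orthant certificate of the form $\sum_\alpha t^\alpha \sigma_\alpha(t)$ with each $\sigma_\alpha$ SOS; substituting $t_i = q_i(x)$ then yields an SOS polynomial in $x$ since monomials $t^\alpha$ become products of the PSD forms $q_i$. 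One clean route is to prove a stronger algebraic identity: Maclaurin's inequality has a known proof via Newton's inequalities $E_j(t)^2 \ge E_{j-1}(t)E_{j+1}(t)$, and telescoping these gives $E_k^n - E_{nk}$ as an explicit nonnegative combination (with monomial coefficients) of the "Newton gaps" $E_j^2 - E_{j-1}E_{j+1}$; if each such gap can be written in the required form — and for elementary symmetric polynomials these gaps are themselves known to be nonnegative combinations of squared differences of monomials — the certificate propagates. I would structure the final proof to first establish this monomial-SOS representation of $E_k(t)^n - E_{nk}(t)$ on the nonnegative orthant as a self-contained lemma, then substitute $t_i = x^\dagger A_i x$ and assemble the two brackets above to exhibit $\lambda_k^{\,n}\norm{x}^{2nk} - E_{nk}(q_1,\ldots,q_d)$ as SOS, which gives $\srel_{nk}(\cA) \le \lambda_k = \srel_k(\cA)$.
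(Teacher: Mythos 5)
Your proposal is correct and follows essentially the same route as the paper: feasibility of the degree-$k$ relaxation gives an SOS certificate for $\lambda_k^n\norm{x}^{2nk} - E_k^n$ via the factorization of $a^n - b^n$, and the remaining piece $E_k^n - E_{nk} \succeq 0$ is exactly what the paper supplies by citing Proposition \ref{prop:partial-maclaurin-sos} (Frenkel--Horv\'ath), whose proof proceeds via the SOS Newton-type inequalities $E_iE_j \succeq E_{i-1}E_{j+1}$ that you correctly identify as the delicate ingredient.
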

From this we can show a partial order for relaxation values $\srel_k$, based on
the divisibility of their degrees. For example, if $d = 2^m$, Theorem
\ref{thm:sos-relax-partial-order} implies that
$\srel_1(\cA) \ge \srel_2(\cA) \ge \srel_4(\cA) \ge \cdots \ge
\srel_{2^m}(\cA)$. We use the following lemma about a Sum-of-Squares proof of
Maclaurin's inequality to prove Theorem \ref{thm:sos-relax-partial-order}.
\begin{proposition}[Lemma 3 of
  \cite{FrenkelMinkowskiinequalitysums2012}] \label{prop:partial-maclaurin-sos}
  Given $x \in \R^n$, let $s_1(x), \ldots, s_d(x) \succeq 0$ be sum of squares
  polynomials. Next let $E_k(x) = E_k(s_1(x),\ldots,s_d(x))$ be the $k$-th
  elementary symmetric polynomial in the variables $s_1(x), \ldots, s_d(x)$. For
  all $1 \le i \le j \le d-1$ the following sum of squares (in the variable $x$)
  inequality holds:
  \begin{align} \label{eq:partial-maclaurin-sos}
    E_i(x) E_j(x) \succeq E_{i-1}(x) E_{j+1}(x)
  \end{align}
  We can use \eqref{eq:partial-maclaurin-sos} to prove Maclaurin's inequality:
  \begin{align} \label{eq:scaled-maclaurin-sos}
    E_i(x)^j \succeq E_j(x)^i
  \end{align}
  As well as the following inequality:
  \begin{align} \label{eq:sos-maclaurin}
    E_m(x)^n \succeq E_{mn}(x)
  \end{align}
\end{proposition}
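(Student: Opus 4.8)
The plan is to prove all three inequalities as statements about \emph{formal} variables $t_1,\dots,t_d$ and then specialize $t_\ell\mapsto s_\ell(x)$. The enabling observation is a substitution principle: if a polynomial $P(t_1,\dots,t_d)$ lies in the cone generated by products of a monomial $t^\alpha$ with a square $q(t)^2$, then $P(s_1(x),\dots,s_d(x))$ is a sum of squares in $x$, since each $s_\ell(x)\succeq 0$, products of sums of squares are sums of squares, and nonnegative combinations of sums of squares are sums of squares. In particular each elementary symmetric polynomial $E_k(t)=\sum_{|I|=k}\prod_{i\in I}t_i$ has nonnegative coefficients, so $E_k(s_1(x),\dots,s_d(x))\succeq 0$; this licenses multiplying and rearranging the inequalities freely. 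Thus it suffices to prove \eqref{eq:partial-maclaurin-sos}, \eqref{eq:scaled-maclaurin-sos} and \eqref{eq:sos-maclaurin} over $t_1,\dots,t_d$ with certificates of this shape --- a plain nonnegative-coefficient decomposition qualifies, and so does a genuine sum of squares in the $t$-variables, as in the classical identity $\binom d1^{-2}E_1(t)^2-\binom d2^{-1}E_2(t)=\tfrac{1}{d^2(d-1)}\sum_{k<l}(t_k-t_l)^2$.

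\emph{Base inequality.} For $1\le i\le j\le d-1$ I would show that $E_i(t)E_j(t)-E_{i-1}(t)E_{j+1}(t)$ has nonnegative coefficients --- hence is substitution-safe --- by a monomial count. Fix a monomial $t^\gamma$, and let $p$ be the number of coordinates of $\gamma$ equal to $2$ and $q$ the number equal to $1$; a short case analysis shows the coefficient of $t^\gamma$ in $E_a(t)E_b(t)$ is $0$ unless $q=a+b-2p$ and $p\le\min(a,b)$, in which case it equals $\binom{a+b-2p}{\,a-p\,}$. Since $E_iE_j$ and $E_{i-1}E_{j+1}$ impose the \emph{same} constraint $q=i+j-2p$, the coefficient of $t^\gamma$ in the difference is $\binom{i+j-2p}{\,i-p\,}-\binom{i+j-2p}{\,i-1-p\,}$, which is $\ge 0$ exactly when $2(i-p)\le(i+j-2p)+1$, i.e.\ when $i\le j+1$ --- true by hypothesis, and the subtracted term is $0$ whenever $i-1-p<0$. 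This is the only computational step. If instead one needs the inequality for the \emph{normalized} polynomials $\binom dk^{-1}E_k$ that appear in $\srel_k$, monomial-nonnegativity alone no longer suffices and one produces a sum-of-squares-in-$t$ certificate generalizing the identity above; the monomial count pins down exactly which coefficients must be balanced.

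\emph{Deriving the other two inequalities.} I would run a majorization argument on products $\prod_\ell E_{c_\ell}(t)$. Sum-of-squares dominance is transitive and is preserved upon multiplying both sides by a sum of squares; since each $E_k(s(x))\succeq 0$ and $E_0=1$, a single ``Robin Hood'' step --- replacing a pair of exponents $(c_a,c_b)$ with $c_a\ge c_b+2$ by $(c_a-1,c_b+1)$ --- is legitimate, being \eqref{eq:partial-maclaurin-sos} for $i=c_b+1\le j=c_a-1$ multiplied through by the (sum-of-squares) product of the untouched factors, and it weakly increases the product in the $\succeq$ order. By Hardy--Littlewood--P\'olya any exponent multiset can be brought by finitely many such steps to any multiset it majorizes (of the same cardinality and sum), with every index invoked in \eqref{eq:partial-maclaurin-sos} lying in $\{1,\dots,d-1\}$. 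Padding with $E_0=1$: the size-$n$ multiset $\{mn,0,\dots,0\}$ majorizes $\{m,\dots,m\}$, so $E_{mn}=E_{mn}E_0^{\,n-1}\preceq E_m^{\,n}$, which is \eqref{eq:sos-maclaurin}; and the size-$j$ multiset $\{\,j^{\,i},0^{\,j-i}\,\}$ majorizes $\{\,i^{\,j}\,\}$ (using $i\le j$), so $E_j^{\,i}=E_j^{\,i}E_0^{\,j-i}\preceq E_i^{\,j}$, which is \eqref{eq:scaled-maclaurin-sos}.

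The main obstacle is not conceptual but is making every step stay honestly inside the substitution-safe cone rather than merely verifying pointwise nonnegativity on $t\ge 0$: the base inequality must come with a genuine nonnegative-coefficient (or sum-of-squares-in-$t$) certificate, and if the normalizing factors $\binom dk$ are carried along they must not destroy that structure. Secondarily, the majorization chain must be arranged so that each transfer invokes \eqref{eq:partial-maclaurin-sos} only with admissible indices $1\le i\le j\le d-1$. Once the base case is certified correctly, deriving the remaining two inequalities is purely formal.
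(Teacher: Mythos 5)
The paper itself gives no proof of this proposition --- it is imported as Lemma 3 of \cite{FrenkelMinkowskiinequalitysums2012} --- so your argument has to be judged against that source and against what the paper needs downstream. For the \emph{unnormalized} elementary symmetric polynomials $e_k=\sum_{|I|=k}\prod_{i\in I}t_i$ your proof is correct and complete: the substitution principle is sound, the monomial count giving coefficient $\binom{i+j-2p}{i-p}$ is right and shows that $e_ie_j-e_{i-1}e_{j+1}$ has nonnegative coefficients for $i\le j$ (this is essentially the argument in the cited reference), and the Robin Hood/majorization derivation of $e_i^j\succeq e_j^i$ and $e_m^n\succeq e_{mn}$ from the base inequality is clean, with every invoked index pair admissible and every multiplication by an untouched factor staying inside the sum-of-squares cone.

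The gap is the normalization, which you flag but do not close. In this paper $E_k$ carries the factor $\binom{d}{k}^{-1}$ (it is defined as the \emph{normalized} elementary symmetric polynomial at the start of Section \ref{sec:hierarchy}), and the proof of Theorem \ref{thm:sos-relax-partial-order} invokes \eqref{eq:sos-maclaurin} in exactly that normalized form, $E_k(\cA)^n\succeq E_{kn}(\cA)$. The normalized inequality is strictly stronger than the unnormalized one you certify: it amounts to $e_m^n\succeq \bigl(\binom{d}{m}^n/\binom{d}{mn}\bigr)\,e_{mn}$ with $\binom{d}{m}^n/\binom{d}{mn}\ge 1$, and it is provably \emph{not} coefficient-wise nonnegative --- already $E_1^d-E_d=\bigl(d^{-1}\sum_\ell t_\ell\bigr)^d-\prod_\ell t_\ell$ has negative coefficients. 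So ``one produces a sum-of-squares-in-$t$ certificate generalizing the identity above'' is not a routine adjustment; it is the entire content of the normalized base inequality $E_iE_j\succeq E_{i-1}E_{j+1}$, from which your (correct) majorization machinery would then deliver \eqref{eq:scaled-maclaurin-sos} and \eqref{eq:sos-maclaurin}. If instead the proposition is read with unnormalized $E_k$, your proof is complete as written, but then the normalization discrepancy simply migrates into the proof of Theorem \ref{thm:sos-relax-partial-order}; either way, the normalized base case still has to be certified.
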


\begin{proof}[Proof of Theorem \ref{thm:sos-relax-partial-order}]
  Since $\srel_k$ is an optimal solution to \eqref{eq:sos-relax-no-alpha}, let
  $\lambda^*_k = \srel_k(\cA)^k$ and we have
  \begin{align*}
    \lambda_k^* \norm{x}^{2k} - E_k(\cA) \succeq 0.
  \end{align*}
  Since $\norm{x}^{2k}$ and $E_k(\cA)$ are both Sum-of-Squares polynomials in
  $x$, this implies that
  \begin{align*}
    {\lambda_k^*}^n \norm{x}^{2nk} - E_k(\cA)^n \succeq 0.
  \end{align*}
  From \eqref{eq:sos-maclaurin} we can show that
  \begin{align*}
    {\lambda_k^*}^n \norm{x}^{2nk} - E_{kn}(\cA)
    &\succeq 0.
  \end{align*}
  Since the above equation is a feasible solution to optimization problem
  \eqref{eq:sos-relax-no-alpha} with optimum $\srel_{kn}(\cA)$, we have
  $\srel_{kn}(\cA) = \paren{\lambda^*_{kn}}^{1/kn} \le \paren{\lambda_k^*}^{1/k} = \srel_k(\cA)$.
\end{proof}
If we let $k = 1$ and $n = d$, we can introduce multipliers $\alpha_i$ to this
proof to get:
\begin{proposition} \label{prop:sos1ged}
  \begin{align*}
    \textsc{OptSOS}_d(\cA) \le \textsc{OptSOS}_1(\cA) = \textsc{OptSDP}(\cA)
  \end{align*}
\end{proposition}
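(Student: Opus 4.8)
The plan is to reuse the proof of Theorem~\ref{thm:sos-relax-partial-order} in the special case $k=1$, $n=d$, while carrying along the multipliers that distinguish $\textsc{OptSOS}_1$ from $\srel_1$. Recall that by Definition~\ref{def:optsos} (and Theorem~\ref{thm:dual-sdp}) we have $\textsc{OptSOS}_1(\cA) = \textsc{OptSDP}(\cA)$, and this value $\lambda^*$ is attained together with multipliers $\alpha_1^*, \ldots, \alpha_d^* > 0$ satisfying $\prod_{i=1}^d \alpha_i^* \ge 1$ and the sum-of-squares inequality $\lambda^*\norm{x}^2 - \tfrac1d\sum_{i=1}^d \alpha_i^* \dotp{x, A_i x} \succeq 0$. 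So it suffices to turn this into a feasible solution of \eqref{eq:sos-relax-full} of the same value.

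First I would set $s_i(x) \defn \alpha_i^* \dotp{x, A_i x}$; since $A_i \succeq 0$ we can write $\dotp{x, A_i x} = \norm{A_i^{1/2}x}^2$, so each $s_i$ is a sum of squares (here $\alpha_i^* > 0$ is used), and $E_1(s_1,\ldots,s_d) = \tfrac1d\sum_i s_i$, so the feasibility inequality becomes $\lambda^*\norm{x}^2 - E_1(s_1(x),\ldots,s_d(x)) \succeq 0$. Since $\norm{x}^2$ and $E_1(s)$ are both sums of squares, raising to the $d$-th power and telescoping (a product of sums of squares is a sum of squares, exactly as in the proof of Theorem~\ref{thm:sos-relax-partial-order}) gives ${\lambda^*}^d\norm{x}^{2d} - E_1(s_1,\ldots,s_d)^d \succeq 0$. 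Then I would apply \eqref{eq:sos-maclaurin} of Proposition~\ref{prop:partial-maclaurin-sos} to the sums of squares $s_1,\ldots,s_d$ with $m=1$, $n=d$ to get $E_1(s)^d \succeq E_d(s) = \prod_{i=1}^d s_i = \paren{\prod_{i=1}^d \alpha_i^*} \prod_{i=1}^d \dotp{x, A_i x}$; since $\prod_i \alpha_i^* \ge 1$ and $\prod_i \dotp{x, A_i x}$ is a sum of squares, the leftover scalar can be discarded, yielding $E_d(s) \succeq \prod_{i=1}^d \dotp{x, A_i x}$. Chaining the three inequalities gives ${\lambda^*}^d\norm{x}^{2d} - \prod_{i=1}^d \dotp{x, A_i x} \succeq 0$, so $\gamma = {\lambda^*}^d$ is feasible for \eqref{eq:sos-relax-full} and $\textsc{OptSOS}_d(\cA) \le \gamma^{1/d} = \lambda^* = \textsc{OptSOS}_1(\cA) = \textsc{OptSDP}(\cA)$.

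The argument is essentially mechanical once Theorem~\ref{thm:sos-relax-partial-order} and Proposition~\ref{prop:partial-maclaurin-sos} are available; the only point requiring care is the bookkeeping of the multipliers, namely checking that $\alpha_i^* > 0$ makes $s_i$ a genuine sum of squares so Proposition~\ref{prop:partial-maclaurin-sos} applies to it, and that the constraint $\prod_i \alpha_i^* \ge 1$ (an inequality, not an equality) is exactly what permits dropping the scalar factor $\prod_i \alpha_i^*$ in the last step. Equivalently, one can observe that the single multiplier $\alpha \defn \prod_i \alpha_i^*$ attached to the unique index set in $\mathcal{S}_d$ in Definition~\ref{def:optsos} may be taken equal to $1$ without loss, which is why no multipliers survive on the $k=d$ side.
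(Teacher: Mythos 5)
Your proposal is correct and is exactly the argument the paper intends: the paper's own "proof" is the one-line remark that one sets $k=1$, $n=d$ in the proof of Theorem~\ref{thm:sos-relax-partial-order} and introduces the multipliers, and you have simply carried that out in full, absorbing $\alpha_i^*$ into the sums of squares $s_i$ and using $\prod_i \alpha_i^* \ge 1$ to discard the leftover scalar. The bookkeeping is all sound, so there is nothing to add.
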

It is natural to ask how good an approximation $\textsc{OptSOS}_k(\cA)$ is as a
function of $k$, and how we can recover a feasible solution from the solution of
\eqref{eq:sos-primal}. We will first propose a rounding algorithm for all levels
of this relaxation that generalizes the rounding algorithm presented in Section
\ref{sec:rounding-sdp}, then analyze its approximation ratio for the case where
the relaxation is exact. Finally we show a lower bound on the integrality gap of
$\textsc{OptSOS}_k(\cA)$, and show that this bound decreases as $k$ increases.

\subsection{Rounding Algorithm}
\label{sec:rounding-alg}

Even though the higher-degree relaxations $\textsc{OptSOS}_k(\cA)$ provide upper
bounds to the true optimum $\textsc{Opt}(\cA)$, it is not immediately clear how
to produce a feasible solution to the optimization problem
\eqref{eq:opt-prod-psd}. Here we describe a general rounding procedure for
obtaining a feasible solution from each of the higher-degree relaxations. As a
generalization to the rounding algorithm for the quadratic case in Section
\ref{sec:rounding-sdp}, we first construct a PSD moment matrix $M = UU^\dagger$
(unlike the quadratic case, $M$ is chosen randomly) and generate a feasible
point $y$ on the sphere as follows:
\begin{enumerate}
\item Sample $v$ uniformly at random on $S_\K$
\item Sample $x \sim N_\K(0, M(v))$, where $M(v) = \pEx\bracket{\dotp{v, x}^{2k-2} x x^\dagger}$
\item Return $y = x/\norm{x}$
\end{enumerate}
In the proof of Theorem \ref{thm:approx-factor-prod-psd} in Section
\ref{sec:rounding-sdp}, we showed that when $k=1$, the above rounding algorithm
produces a solution that achieves a value of at least
$e^{-L_r(\K)} \textsc{OptSOS}_1(\cA)$ in expectation. This implies that
$\textsc{OptSOS}_1(\cA)$ achieves an approximation factor of at least
$e^{-L_r(\K)}$. One natural question to ask is if the higher-degree
$\textsc{OptSOS}_k(\cA)$ improves on this approximation factor.

We answer this question partially by providing a lower bound on the performance
of the rounding algorithm for instances $\cA$ where the relaxation
$\textsc{OptSOS}_k(\cA)$ is exact. Then we state a conjecture involving an
identity of pseudoexpectations which if true, the same bound applies to all
instances. Even when the relaxation is exact, this is a non-trivial
result. Since there can be exponentially many solutions to
\eqref{eq:opt-prod-psd} (see for instance the example in Section
\ref{sec:example-ico}), recovering one solution from the pseudoexpectation in
$\textsc{OptSOS}_k(\cA)$ is a tensor decomposition problem. For clarity of
exposition, we present our result for the case of $\K = \C$. We note that an
analogous result can also be proved for $\K = \R$.

\begin{theorem} \label{thm:rounding-actual} Suppose $\K = \C$ and
  $\textsc{OptSOS}_k(\cA) = \textsc{Opt}(\cA)$. Let
  \begin{align*}
    C(n, k) \defn \gamma + \frac{(1-\eps)^{n-1}(-\gamma + \log (1-\eps))}{(1-\eps - \eps/(n-1))^{n-1}} -
              \sum_{\ell=1}^{n-1} \frac{\eps (1-\eps)^{\ell-1}(\log(\eps/(n-1)) + \psi(n-\ell))}{(n-1)(1-\eps - \eps/(n-1))^\ell}
  \end{align*}
  There exists a vector $v$ so that given $y$ generated from the above rounding
  procedure,
  \begin{align} \label{eq:rounding-expectation}
    \Ex_y \bracket{\prod_{i=1}^d \dotp{y, A_i y}^{1/d}}
    \ge e^{- C(n, k)} \textsc{OptSOS}_k(\cA),
  \end{align}
  where $C_\K(n, k) \ge 0$ is bounded from above by $L_n(\C)$ and decreases with
  increasing $k$.
\end{theorem}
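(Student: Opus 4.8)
\emph{Proof strategy.} The plan is to run the argument of Theorem~\ref{thm:approx-factor-prod-linear} with the fixed matrix $X^*$ replaced by the rescaled conditional second-moment matrix $\hat M(v)\defn M(v)/\Tr M(v)$, and with the contraction direction $v$ chosen so that $\hat M(v)$ concentrates as much mass as possible along one direction. The first ingredient is that exactness yields a genuine representing measure: since $\textsc{Opt}(\cA)\le\textsc{OptSOS}_d(\cA)\le\textsc{OptSOS}_k(\cA)=\textsc{Opt}(\cA)$, the full Sum-of-Squares relaxation is exact, and the optimal degree-$k$ pseudoexpectation $\pEx$ in \eqref{eq:sos-dual} is the moment functional of a probability measure $\mu$ on the unit sphere whose support lies in the optimum set $\mathcal{O}\defn\{x:\norm{x}=1,\ p_\cA(x)^{1/d}=\textsc{Opt}(\cA)\}$; this is where ``recovering an optimum is a tensor decomposition problem'' is made precise, for instance by a flat-extension argument. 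For a unit vector $v$ write $\mu_v$ for the reweighted measure $d\mu_v(x)\propto\abs{\dotp{v,x}}^{2k-2}\,d\mu(x)$, so that $M(v)=\Ex_{x\sim\mu}\bracket{\abs{\dotp{v,x}}^{2k-2}xx^\dagger}\succeq0$ with $\Tr M(v)=\Ex_\mu\bracket{\abs{\dotp{v,x}}^{2k-2}}$, and $\hat M(v)=\Ex_{x\sim\mu_v}\bracket{xx^\dagger}$ has trace $1$. Because $\mu_v$ is supported on $\mathcal{O}$, Jensen's inequality for the concave geometric mean gives, uniformly in $v$,
\begin{align*}
  \prod_{i=1}^d\dotp{A_i,\hat M(v)}^{1/d}
  &=\prod_{i=1}^d\Ex_{\mu_v}\bracket{\dotp{x,A_ix}}^{1/d}
  \ge \Ex_{\mu_v}\bracket{p_\cA(x)^{1/d}}=\textsc{Opt}(\cA).
\end{align*}

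Next I would locate a good direction. Since $\lambda_{\max}(\hat M(v))\ge\dotp{v,\hat M(v)v}=\Ex_\mu[\abs{\dotp{v,x}}^{2k}]/\Ex_\mu[\abs{\dotp{v,x}}^{2k-2}]$, I would average numerator and denominator over $v$ drawn uniformly from $S_\C$, using $\Ex_v\bracket{\abs{\dotp{v,x}}^{2m}}=\binom{n+m-1}{m}^{-1}\norm{x}^{2m}$ (from the $\mathrm{Beta}(1,n-1)$ law of $\abs{v_1}^2$). Then $\Ex_v[\Ex_\mu[\abs{\dotp{v,x}}^{2k}]-(1-\eps)\Ex_\mu[\abs{\dotp{v,x}}^{2k-2}]]=0$ for $1-\eps=\binom{n+k-2}{k-1}/\binom{n+k-1}{k}=\frac{k}{n+k-1}$, so some unit $v$ satisfies $\lambda_{\max}(\hat M(v))\ge1-\eps$, where $\eps=\eps(n,k)\defn\frac{n-1}{n+k-1}$ has $\eps(n,1)=\frac{n-1}{n}$ and $\eps(n,k)\to0$ as $k\to\infty$. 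Fix such a $v$, factor $\hat M(v)=\hat U\hat U^\dagger$ with $\hat U\in\C^{n\times r}$ ($r=\rank\hat M(v)$) and $\Tr(\hat U^\dagger\hat U)=1$, and note that the rounding procedure then samples $z\sim\cnormal(0,I_r)$, sets $x=\hat Uz$, and returns $y=x/\norm{x}$. Exactly as in the proof of Theorem~\ref{thm:approx-factor-prod-linear}, Jensen's inequality together with the lower bound in Proposition~\ref{prop:expect-chi-square-lower-upper} (applied to $\hat U^\dagger A_i\hat U/\Tr(\hat U^\dagger A_i\hat U)$, which gives $\Ex_z[\log\dotp{A_i\hat Uz,\hat Uz}]\ge-\gamma+\log\dotp{A_i,\hat M(v)}$) reduces the theorem to the estimate $\Ex_z[\log\norm{\hat Uz}^2]\le-\gamma+C(n,k)$; combining this with the displayed bound gives $\Ex_y[\prod_i\dotp{y,A_iy}^{1/d}]\ge e^{-C(n,k)}\prod_i\dotp{A_i,\hat M(v)}^{1/d}\ge e^{-C(n,k)}\textsc{Opt}(\cA)$, as required.

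The remaining estimate is the technical core. Here $\Ex_z[\log\norm{\hat Uz}^2]=\Ex_z[\log\sum_{j=1}^r\lambda_j\abs{z_j}^2]$, where $\lambda_1\ge\cdots\ge\lambda_r$ are the nonzero eigenvalues of $\hat M(v)$, so $\sum_j\lambda_j=1$ and $\lambda_1\ge1-\eps\ge1/n$. The map $\lambda\mapsto\Ex_z[\log\sum_j\lambda_j\abs{z_j}^2]$ is concave and symmetric (as used in the proof of Proposition~\ref{prop:expect-chi-square-lower-upper}); fixing $\lambda_1$ it is maximized by spreading the rest evenly, and along the segment from the vertex $(1,0,\dots,0)$ to the center $(\tfrac1n,\dots,\tfrac1n)$ it is nondecreasing toward the center. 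Hence, after extending the eigenvalue vector by zeros to a point of the $n$-simplex with first coordinate $\ge1-\eps$, the supremum of $\Ex_z[\log\sum_j\mu_j\abs{z_j}^2]$ over such points is attained at $\mu=(1-\eps,\tfrac{\eps}{n-1},\dots,\tfrac{\eps}{n-1})$; evaluating $\Ex_z[\log((1-\eps)\abs{z_1}^2+\tfrac{\eps}{n-1}\sum_{j=2}^n\abs{z_j}^2)]$ through the density of that quadratic form --- a simple pole at $1-\eps$ and a pole of order $n-1$ at $\tfrac{\eps}{n-1}$, which is what produces the $\psi(n-\ell)$ terms and the factors $1-\eps-\tfrac{\eps}{n-1}$ --- and subtracting $\Ex_z[\log\abs{z_1}^2]=-\gamma$ reproduces exactly the stated $C(n,k)$. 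The same expression, written $C(n,k)=\gamma+\Ex_z[\log((1-\eps)\abs{z_1}^2+\tfrac{\eps}{n-1}\sum_{j\ge2}\abs{z_j}^2)]$, is $\ge0$ by Proposition~\ref{prop:expect-chi-square-lower-upper}, is nondecreasing in $\eps$ (the eigenvalue vector slides toward the simplex center), equals $0$ at $\eps=0$, and equals $L_n(\C)$ at $\eps=\frac{n-1}{n}$ (all eigenvalues $1/n$, by Fact~\ref{fact:ex-log-CN}); since $\eps(n,k)$ decreases in $k$ from $\eps(n,1)=\frac{n-1}{n}$, this yields $0\le C(n,k)\le L_n(\C)$ with $C(n,k)$ decreasing in $k$.

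I expect the hardest point to be the first ingredient: passing from value-exactness to a genuine moment functional supported on $\mathcal{O}$, and checking that the argument is robust to the solver returning \emph{any} optimal pseudoexpectation --- the residual loss $C(n,k)$ is precisely a measure of how ``spread out'' such a $\pEx$ can still be, which is why it does not vanish for small $k$. The second most delicate point is the extremal evaluation of $\Ex_z[\log\sum_j\lambda_j\abs{z_j}^2]$ into the closed form $C(n,k)$ (the eigenvalue extremization together with the density computation); by contrast the averaging-over-$v$ step that pins down a concentrated $\hat M(v)$, which drives the whole argument, is brief.
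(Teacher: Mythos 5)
Your proposal follows essentially the same route as the paper's proof: the same selection of $v$ by averaging $\Ex_x[\abs{\dotp{v,x}}^{2k}]/\Ex_x[\abs{\dotp{v,x}}^{2k-2}]$ over the sphere to get $\lambda_{\max}\ge k/(n+k-1)$, the same concavity/symmetry extremization pushing the eigenvalue vector to $(1-\eps,\tfrac{\eps}{n-1},\dots,\tfrac{\eps}{n-1})$, and the same closed-form evaluation of $\Ex_z[\log\sum_j\lambda_j\abs{z_j}^2]$ via the repeated-eigenvalue density in the appendix. Your direct Jensen bound $\prod_i\dotp{A_i,\hat M(v)}^{1/d}\ge\textsc{Opt}(\cA)$ is only a minor streamlining of the paper's detour through $\prod_{I}\pEx[\prod_{i\in I}\dotp{x,A_ix}]$, and the issue you flag as hardest --- passing from value-exactness to a genuine representing measure on the optimum set --- is likewise assumed rather than proved in the paper (which acknowledges this in the discussion following the theorem).
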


\begin{proof}
  First we write the expectation in exponential form and use Jensen's
  inequality:
  \begin{align*}
    \Ex_y \bracket{\prod_{i=1}^d \dotp{y, A_i y}^{1/d}}
    &= \Ex_y \bracket{\exp\paren{ \sum_{i=1}^d \dotp{y, A_i y}^{1/d}}} \\
    &\ge \exp\paren{\frac{1}{d} \sum_{i=1}^d \Ex_y \log \dotp{y, A_i y}} \\
    &= \exp\paren{\frac{1}{d} \binom{d-1}{k-1}^{-1} \sum_{I\in \mathcal{S}_k } \sum_{i \in I} \Ex_y \log \dotp{y, A_i y}}
  \end{align*}
  Next we analyse each term in the sum in the exponential. Let
  $M(v) = UU^\dagger$, and the rounding procedure is equivalent to setting
  $y = Uw/\norm{Uw}$, where $w$ is drawn from a standard multivariate complex
  Gaussian distribution.
  \begin{align*}
    \sum_{i \in I} \Ex_y \log \dotp{y, A_i y}
    &= \sum_{i \in I} \Ex_{w} \log \dotp{Uw, A_i Uw} \\
    &= \sum_{i \in I} \log{\Tr(U^\dagger A_i U)} +
      \Ex_{w}\bracket{\log w^\dagger\frac{U^\dagger A_i U}{\Tr(U^\dagger A_i U)}w}
      - \Ex_{w}\bracket{\log \norm{Uw}^2} \\
    &\ge \sum_{i \in I} \log{\dotp{A_i, M(v)}} - \gamma - \Ex_{w}\bracket{\log \norm{Uw}^2},
  \end{align*}
  where the inequality is implied by Proposition
  \ref{prop:expect-chi-square-lower-upper}. Next we bound $\log{\dotp{A_i, M}}$ in
  terms of $\Ex_{x\sim \mu}$, expectations over the distribution of solutions to
  \eqref{eq:opt-prod-psd}. We first define $M(v)$ as expectation over a reweighed
  distribution $\mu'$. Let
  \begin{align*}
    f(x) &= \frac{\dotp{v, x}^{2k-2}} {\Ex_{x \sim \mu}\bracket{\dotp{v, x}^{2k-2}}} \\
    \Ex_{x \sim \mu'}[g(x)] &= \Ex_{x \sim \mu}[f(x) g(x)],
  \end{align*}
  since $f(x) \ge 0$ and $\Ex_{x\sim \mu}[f(x)] = 1$. Then
  $M(v) = \Ex_{x \sim \mu'} \bracket{xx^\dagger}$ and we use Jensen's inequality
  to show that
  \begin{align*}
    \sum_{I \in \mathcal{S}_k}\sum_{i \in I} \log{\dotp{A_i, M(v)}}
    = \sum_{I \in \mathcal{S}_k} \sum_{i \in I}\log{\Ex_{x \sim \mu'}\bracket{\dotp{x, A_i x}}}
    \ge \Ex_{x \sim \mu'}\bracket{\sum_{I \in \mathcal{S}_k}\sum_{i \in I} \log{\dotp{x, A_i x}}}.
  \end{align*}
  Now since we are assuming that $\mu$ (and so is $\mu'$) is a distribution over
  actual solutions,
  \begin{align*}
    \sum_{I \in \mathcal{S}_k}\sum_{i \in I} \log{\dotp{x, A_i x}}
    = \log \paren{\prod_{I\in \mathcal{S}_k}
    \Ex_{x \sim \mu}\bracket{\prod_{i \in I} \dotp{x, A_i x}}},
  \end{align*}
  and since this is constant for all $x$ in the support of $\mu$ and $\mu'$, we have
  \begin{align*}
    \sum_{I \in \mathcal{S}_k} \sum_{i \in I} \log{\dotp{A_i, M(v)}}
    \ge \log \paren{\prod_{I\in \mathcal{S}_k}
    \Ex_{x \sim \mu}\bracket{\prod_{i \in I} \dotp{x, A_i x}}}.
  \end{align*}
  This completes the first part of our proof. Next we need to upper bound
  $\Ex_{w}\bracket{\log \norm{Uw}^2}$, which by results in Appendix
  \ref{sec:expected-log-chi-squared} depends on the eigenvalues of $U^\dagger U$
  which are the same as the eigenvalues of $M(v)$. Informally, with high
  probability one eigenvalue of $M(v)$ will be large while the other ones will
  be small, since by taking high powers the gap between the top eigenvalue and
  the other eigenvalues will be amplified. Thus we can use the results in
  Section \ref{sec:expected-log-chi-squared} to bound the last term. First we
  compute a lower bound for $\lambda_{\max}(M(v))$ for the case where $\K = \C$.
  \begin{align*}
    \lambda_{\max}(M(v)) =
    \lambda_{\max}\pfrac{\Ex_x\bracket{\dotp{v, x}^{2k-2}xx^\dagger}}{\Ex_x\bracket{\dotp{v, x}^{2k-2}}}
    \ge \frac{\Ex_x\bracket{\dotp{v, x}^{2k}}}{\Ex_x\bracket{\dotp{v, x}^{2k-2}}}
  \end{align*}
  Using the fact that for random variables $X$ and $Y$ where $\Pr(Y>0)=1$,
  $\Pr(X/Y \ge \Ex[X]/\Ex[Y]) > 0$, we know that there exist a $v$ such that:
  \begin{align*}
    \lambda_{\max}(M(v)) \ge
    \frac{\Ex_v\Ex_x\bracket{\dotp{v, x}^{2k}}}{\Ex_v \Ex_x\bracket{\dotp{v, x}^{2k-2}}} =
    \frac{\Ex_v[|v_1|^{2k}]}{\Ex_v[|v_1|^{2k-2}]} = \binom{n+k-1}{n-1}^{-1} \binom{n+k-2}{n-1}
    = \frac{k}{k+n-1}
  \end{align*}
  Note that this also holds if $\Ex_x$ is a pseudoexpectation instead, since we
  can interchange expectations and pseudoexpectations and
  $\Ex_v\bracket{\dotp{v, x}^{2k}} = \Ex_v[\abs{v_1}^{2k}] \norm{x}^{2k}$. If we
  let $\frac{k}{k+n-1} = 1-\eps$ and suppose that $1-\eps \ge 1/n$ (always holds
  when $k\ge 1$), then by a symmetry argument and using the concavity of
  $f(\lambda) = \Ex\bracket{\log\paren{\lambda_1 \abs{w_1}^2 + \cdots + \lambda_n
      \abs{w_n}^2}}$,
  \begin{align*}
    \Ex_{w}\bracket{\log \norm{Uw}^2}
    &= \Ex\bracket{\log\paren{\lambda_1 \abs{w_1}^2 + \cdots + \lambda_n \abs{w_n}^2}} \\
    &\le \Ex\bracket{\log\paren{\lambda_1 \abs{w_1}^2 + \frac{1-\lambda_1}{n-1} \abs{w_2}^2 + \cdots + \frac{1-\lambda_1}{n-1} \abs{w_n}^2}} \\
    &\le \Ex\bracket{\log\paren{(1-\eps) \abs{w_1}^2 + \frac{\eps}{n-1} \abs{w_2}^2 + \cdots + \frac{\eps}{n-1} \abs{w_n}^2}}.
  \end{align*}
  The last inequality arises because the expectation as a function of $\eps$ is
  monotonically increasing on the interval $[0, 1-1/n]$.  Using the result in
  Appendix \ref{sec:expected-log-chi-squared}, we get that if $k \ge 1$, there
  exists a $v$ so that
  \begin{align*}
    \Ex_z\bracket{\log \sum_i \abs{z_i}^2\lambda_i(M(v))}
    &\le \frac{(1-\eps)^{n-1}(-\gamma + \log (1-\eps))}{(1-\eps - \eps/(n-1))^{n-1}} -
      \sum_{\ell=1}^{n-1} \frac{\eps (1-\eps)^{\ell-1}(\log(\eps/(n-1)) + \psi(n-\ell))}{(n-1)(1-\eps - \eps/(n-1))^\ell} \\
    &= -\gamma + C(n, k)
  \end{align*}
\end{proof}

\begin{figure}[h]
  \centering
  \includegraphics[scale=0.4]{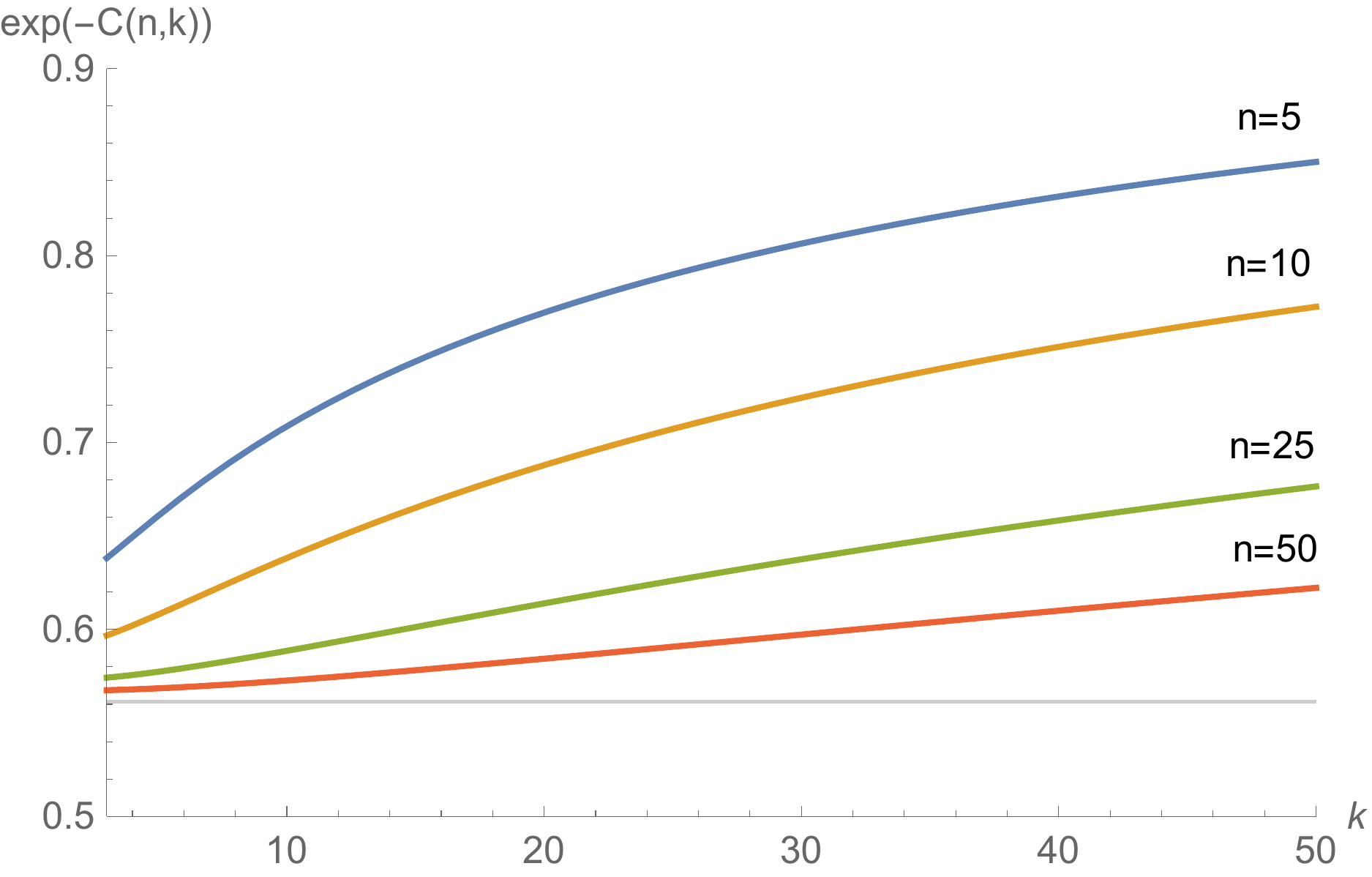}
  \caption{\label{fig:rounding} Plot of $e^{-C(n,k)}$ for different values of
    $n$ and $k$. The horizontal line shows the lower bound
    $e^{-L_r(\C)} > 0.5614$.}
\end{figure}

The analysis of Theorem \ref{thm:rounding-actual} is done assuming that the
solution to the relaxation are real distributions over solutions. To analyze the
approximation ratio we need to translate the results to pseudodistributions. We
first define
\begin{align*}
  \mu(x) \defn \frac{\dotp{v,x}^{2(k-1)}}{\pEx_x \bracket{\dotp{v,x}^{2(k-1)}}},
\end{align*}
so that $\pEx[\mu(x)] = 1$. If the following conjecture is true, then
$\textsc{OptSOS}_k(\cA)$ has an approximation factor of $e^{- C(n, k)}$.
\begin{conjecture}
  \begin{align*}
  \paren{\prod_{i=1}^d \pEx_x \bracket{\mu(x) \dotp{x, A_i x}}}^{\binom{d-1}{k-1}}
  \ge \prod_{I\in \mathcal{S}_k} \pEx_x\bracket{\prod_{i \in I} \dotp{x, A_i x}}.
\end{align*}
\end{conjecture}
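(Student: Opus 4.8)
The plan is to first recast the conjecture in a more transparent form, and then transplant to pseudoexpectations the argument that the proof of Theorem~\ref{thm:rounding-actual} runs for genuine distributions. Write $g_i(x) = \dotp{x, A_i x}$, and let $M(v)$ be the reweighted second-moment matrix $\pEx_x\bracket{\dotp{v,x}^{2(k-1)}\,xx^\dagger}$ rescaled to have unit trace; then $M(v)\succeq 0$ and $\pEx_x[\mu(x) g_i(x)] = \dotp{A_i, M(v)}$. Using $d\binom{d-1}{k-1} = k\binom dk$ together with the dual characterization \eqref{eq:sos-dual}, which gives $\prod_{I\in\mathcal{S}_k}\pEx_x[\prod_{i\in I}g_i] = \textsc{OptSOS}_k(\cA)^{\,d\binom{d-1}{k-1}}$ for the optimal pseudoexpectation $\pEx$, the conjecture is equivalent to
\begin{align*}
  \paren{\prod_{i=1}^d \dotp{A_i, M(v)}}^{1/d} \ \ge\ \textsc{OptSOS}_k(\cA),
\end{align*}
i.e.\ the SDP-feasible point $X = M(v)$ of \eqref{eq:dual-sdp-general} already attains objective value at least $\textsc{OptSOS}_k(\cA)$. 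This is consistent with the known inequality $\textsc{OptSOS}_k(\cA)\le\textsc{OptSOS}_1(\cA) = \textsc{OptSDP}(\cA)$ --- the last being the maximum of precisely this objective over feasible $X$ --- so the content is the matching lower bound. For a distribution supported on optimal solutions this lower bound is exactly what Jensen's inequality applied to the reweighted measure $\mu'$ supplies in the proof of Theorem~\ref{thm:rounding-actual}; the whole difficulty is that for a degree-$2k$ pseudoexpectation neither ``support on optima'' nor the two uses of Jensen (concavity of $\log$) in that proof are meaningful.

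The route I would try is \emph{tensorization}. After clearing denominators the conjecture is a polynomial inequality between a product of $d\binom{d-1}{k-1}$ pseudoexpectations and, on the other side, a product of $\binom dk$ pseudoexpectations times a power of the normalizer $\pEx_x[\dotp{v,x}^{2(k-1)}\norm{x}^2]$. Because the pseudo-moment matrix of $\pEx$ is positive semidefinite, so is its $m$-fold tensor power, and hence $\pEx^{\otimes m}$ --- acting on polynomials that are degree-$2k$ homogeneous in each of $m$ independent copies $x^{(1)},\dots,x^{(m)}$ of the variables --- is again a valid degree-$k$ homogeneous pseudoexpectation. This lets one rewrite each product-of-pseudoexpectations above as a single pseudoexpectation of a product of polynomials in disjoint blocks of variables; padding the left side by factors $\pEx_x[\norm{x}^{2k}] = 1$ so both sides use $m = (k+1)\binom dk$ blocks, the conjecture reduces to finding a matching of the blocks on the two sides under which the difference $P(\vec x) - Q(\vec x)$ of the two products --- both degree-$2k$ homogeneous in every block --- is a sum of squares of polynomials of degree at most $k$ in each block. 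Any such certificate is admissible for $\pEx^{\otimes m}$ and proves the conjecture, and it is the algebraic substitute for the two Jensen steps. For $k = 1$ the polynomials $P$ and $Q$ coincide after relabeling, so the certificate is trivial and the equality is recovered.

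The hard part is producing this certificate for $k\ge 2$, and I am not sure one exists. The natural attempt is to match the quadratic arguments $g_i$ across blocks as in the sum-of-squares proof of Maclaurin's inequality (Proposition~\ref{prop:partial-maclaurin-sos}) --- trading $\prod_{i\in I}g_i$-type factors for products of single $g_i$'s via \eqref{eq:partial-maclaurin-sos} --- while using the $\norm{x^{(j)}}^{2k}$ padding on one side and the $\dotp{v,x^{(j)}}^{2(k-1)}\norm{x^{(j)}}^2$ factors on the other to absorb the reweighting weights $\dotp{v,x^{(j)}}^{2(k-1)}$ carried by the $d$ ``$g$-blocks'' of $P$. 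The real danger is that these weights cannot be absorbed inside the sum-of-squares cone: already for the naive matchings one checks that $P\succeq Q$ can fail pointwise (e.g.\ at evaluations where all blocks coincide), so a valid certificate, if it exists, must be subtler, and the conjecture may genuinely require finer structure of the \emph{optimal} pseudoexpectation --- its extremality / complementary slackness with \eqref{eq:sos-primal} (which would add a ``pseudo-ideal'' of equalities usable in the certificate), or the lower bound $\lambda_{\max}(M(v))\ge\tfrac{k}{k+n-1}$ forced by the special choice of $v$, which makes $M(v)$ nearly rank one. A fallback is then to run the Jensen estimate of Theorem~\ref{thm:rounding-actual} directly on an honest factorization $M(v) = UU^\dagger$ --- legitimate since $M(v)$ is a genuine positive semidefinite matrix --- and separately bound $\prod_{I\in\mathcal{S}_k}\pEx_x[\prod_{i\in I}g_i]$ from above by a power of $\prod_i\dotp{A_i, M(v)}$ using only the defining inequalities of $\pEx$ and the dual form \eqref{eq:sos-dual}. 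Which of these goes through is, I expect, exactly where the difficulty lies, and is presumably why the statement is recorded only as a conjecture.
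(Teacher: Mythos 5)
The statement you were asked to prove is stated in the paper only as a \emph{conjecture}: the authors give no proof, and explicitly flag in the conclusion that resolving it ``may require proving identities involving products of pseudoexpectations.'' So there is no argument of the paper's to compare yours against, and your proposal --- which is candid that it does not close the gap --- is in the same position as the paper itself.

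Your preliminary analysis is sound and, in fact, adds something the paper does not say explicitly. The reformulation via $d\binom{d-1}{k-1}=k\binom{d}{k}$ and the dual form \eqref{eq:sos-dual} --- that for the optimal pseudoexpectation the conjecture is exactly the claim that the feasible point $X=M(v)$ of \eqref{eq:dual-sdp-general} attains objective value at least $\textsc{OptSOS}_k(\cA)$ --- is correct and clarifies what the conjecture is really asserting. Your block count for the tensorization ($(k+1)\binom{d}{k}$ blocks after padding by $\pEx[\norm{x}^{2k}]=1$) is also right, and the observation that $\pEx^{\otimes m}$ is again a valid homogeneous pseudoexpectation is the standard and correct way to make ``products of pseudoexpectations'' amenable to a single SOS certificate. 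One small point worth noting: the paper's normalizer $\pEx_x[\dotp{v,x}^{2(k-1)}]$ is a degree-$2k-2$ evaluation, which is not literally in the domain of a degree-$k$ \emph{homogeneous} pseudoexpectation; your replacement by $\pEx_x[\dotp{v,x}^{2(k-1)}\norm{x}^2]$ is the right repair and should be stated as such.

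The genuine gap is the one you yourself identify: no sum-of-squares certificate for $P-Q$ (nor any substitute argument) is produced, and your own check that $P\succeq Q$ can fail pointwise for the naive matchings shows that Proposition \ref{prop:partial-maclaurin-sos} alone will not suffice --- any certificate must exploit optimality/extremality of $\pEx$ or the specific choice of $v$. Until that piece exists, the statement remains open, exactly as the paper records it. Your proposal should therefore be read as a useful reduction and obstruction analysis, not as a proof.
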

For example, in the case where $k=d$, the above inequality reduces to
\begin{align*}
  \prod_{i=1}^d \pEx\bracket{\mu(x) \dotp{x, A_i x}}
  = \prod_{i=1}^d \frac{\pEx\bracket{\dotp{v,x}^{2(d-1)} \dotp{x, A_i x}}}
                       {\pEx\bracket{\dotp{v,x}^{2(d-1)}}}
  \ge \pEx\bracket{\prod_{i=1}^d \dotp{x, A_i x}}.
\end{align*}

\subsection{Example: Icosahedral Form}
\label{sec:example-ico}
Let $\phi = (1+\sqrt{5})/2$ and consider the following
degree-6 polynomial in 3 variables encoding the symmetries of the icosahedron:
\begin{align*}
  p_{\text{ico}}(x,y,z)
  = \bracket{5(2 \phi - 3) (x+\phi y)(x-\phi y)(y+\phi z)(y-\phi z)(z+\phi x)(z-\phi x)}^2.
\end{align*}
On the sphere $x^2+y^2+z^2 = 1$, $p_{\text{ico}}$ has 62 critical points: 12
maxima on the faces, 20 minima on the vertices and 30 saddle points on the edges
of the icosahedron. The normalizing constant is chosen so that
$p_{\text{ico}}(x,y,z)$ has a maximum of 1 on the sphere.  Because of its
icosahedral symmetry, $p_{\text{ico}}$ an example of a polynomial where the gap
between the SDP-based relaxation and the true optimum is large. When we solve
the relaxation $\textsc{OptSDP} = \textsc{OptSOS}_1$ for maximizing
$p_{\text{ico}}$ on the sphere, $X^*=I_3$ because of symmetry. Thus the rounding
algorithm in Section \ref{sec:rounding-sdp} reduces to sampling a uniformly
random point on the sphere, completely ignoring the structure of
$p_{\text{ico}}$. However, we can do better by solving the relaxations
$\textsc{OptSOS}_k$ for $k = 2, \ldots, 6$. The following table shows the upper
bounds obtained for different values of the relaxation parameter $k$. We can
also apply the rounding algorithm described in the previous section to this
problem, obtaining lower bounds by taking the mean of the function value from
samples returned from the rounding algorithm. From the table below we can see
the quality of the bounds increases with $k$, and when $k=6$ the relaxation is
exact.

\begin{center}
\begin{tabular}{ccc}
\toprule
$k$ & Rounding lower bound & SoS upper bound \\
\midrule
1&0.66019&1.27454 \\
2&0.65575&1.16814 \\
3&0.80480&1.10292 \\
4&0.86907&1.05821 \\
5&0.90546&1.02534 \\
6&0.92616&1.00000 \\
\bottomrule
\end{tabular}
\end{center}

Figure \ref{fig:plots} contains a 3D plot of $p_{\text{ico}}$ showing its
icosahedral symmetry, as well as 2D scatter plots of points sampled from the
rounding algorithm for $k=2, \ldots, 6$. This shows that the distribution
induced by the rounding procedure getting increasingly concentrated towards the
optimal points as the degree $k$ increases.

\subsection{Quality of Sum-of-Squares Relaxations} \label{sec:sos-int-gap}
Similar to Section \ref{sec:int-gap}, we can show a more general result, where
even with the Sum-of-Squares relaxation \eqref{eq:sos-relax-no-alpha}, there is
an integrality gap depending on the degree of relaxation.

\begin{theorem} \label{thm:int-gap-SoS} For any $k \ge 1$ and $\epsilon > 0$,
  there exists $n, d$ and unit vectors
  $v_1, \ldots, v_d \in \K^n$ (where $\K = \R \text{ or } \C$) so that there is
  a gap between the true optimum of the optimization problem:
  \begin{align*}
    \textsc{Opt}(\cA) = \max_{\norm{x} = 1} \paren{\prod_{i=1}^d \abs{\dotp{x, v_i}}^2}^{1/d},
  \end{align*}
  and the value of the degree $k$ Sum-of-Squares relaxation
  $\textsc{OptSoS}_k(\cA)$ given by \eqref{eq:sos-relax-full}:
  \begin{align*}
    \pfrac{\textsc{OptSoS}_k(\cA)}{\textsc{Opt}(\cA)} \ge
    \frac{e^{L_r(\K)}}{1 + (k-1)/n} - \epsilon
  \end{align*}
\end{theorem}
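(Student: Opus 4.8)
The approach parallels the proof of Theorem~\ref{thm:int-gap-SDP}. Fix $\K$, set $n = r$, and invoke Theorem~\ref{thm:lin-pol-const-asymp} to obtain a family of rank-one instances $\cA_d = (v_1 v_1^\dagger, \ldots, v_d v_d^\dagger)$ with $v_i \in \K^n$ unit vectors and $\textsc{Opt}(\cA_d) \to \tfrac1n e^{-L_n(\K)}$ as $d \to \infty$. In particular, for every $\delta > 0$ there is a $d$ with $\textsc{Opt}(\cA_d) < (1+\delta)\tfrac1n e^{-L_n(\K)}$, so the true-optimum side is controlled, and everything reduces to a lower bound on $\textsc{OptSOS}_k(\cA_d)$ that degrades only as $n + k - 1$.

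For that lower bound I would feed the most symmetric pseudoexpectation into the dual \eqref{eq:sos-dual}: let $\pEx[p] = \Ex_x[p(x)]$ with $x$ uniform on the unit sphere $S_\K$. This is an honest expectation of a probability distribution, hence a valid degree-$k$ homogeneous pseudoexpectation, and because \eqref{eq:sos-dual} is a maximization,
$$\textsc{OptSOS}_k(\cA_d) \;\ge\; \Bigl(\prod_{I \in \mathcal{S}_k} \Ex_x\bigl[\textstyle\prod_{i \in I} \abs{\dotp{x, v_i}}^2\bigr]\Bigr)^{1/(d\binom{d-1}{k-1})}.$$
Each factor is evaluated by passing to a Gaussian: if $P$ is a form of bidegree $(k,k)$ then $\Ex_{g \sim \knormal(0, I_n)}[P(g)] = \Ex_g[\norm{g}^{2k}]\,\Ex_x[P(x)]$ (write $g = \norm{g}\cdot (g/\norm{g})$, the two factors being independent), whence $\Ex_x[\prod_{i \in I}\abs{\dotp{x,v_i}}^2] = \Ex_g[\prod_{i\in I}\abs{\dotp{g,v_i}}^2]\,/\,\Ex_g[\norm{g}^{2k}]$. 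The numerator is at least $1$: for $\K = \C$ it equals the permanent of the Gram matrix of $\{v_i\}_{i\in I}$, which dominates the product of its unit diagonal entries, while for $\K = \R$ it is the $m_i \equiv 1$ case of the Gaussian product inequality (a theorem of Frenkel). The denominator is $\prod_{j=0}^{k-1}(n + 2j)$ when $\K = \R$ and $\prod_{j=0}^{k-1}(n+j)$ when $\K = \C$, and by AM--GM its $k$-th root is at most $n + k - 1$ in either case. Hence $\textsc{OptSOS}_k(\cA_d) \ge \bigl(\Ex_g[\norm{g}^{2k}]\bigr)^{-1/k} \ge \frac{1}{n+k-1}$, and combining with the bound on $\textsc{Opt}(\cA_d)$ gives
$$\frac{\textsc{OptSOS}_k(\cA_d)}{\textsc{Opt}(\cA_d)} \;\ge\; \frac{n\, e^{L_n(\K)}}{(1+\delta)(n+k-1)} \;=\; \frac{e^{L_r(\K)}}{(1+\delta)(1 + (k-1)/n)},$$
which exceeds $\frac{e^{L_r(\K)}}{1 + (k-1)/n} - \epsilon$ once $d$ is taken large enough that $\delta$ is small enough.

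The main obstacle is the lower bound $\Ex_g[\prod_{i\in I}\abs{\dotp{g, v_i}}^2] \ge 1$. For $\K = \C$ this is just the classical fact that the permanent of a positive semidefinite matrix is at least the product of its diagonal, but for $\K = \R$ it is the Gaussian product inequality for squared linear forms, which is true but nontrivial and must be cited or reproved via a hafnian/Pfaffian computation. A secondary point that needs care is verifying that the uniform measure on $S_\C$ gives a legitimate pseudoexpectation in the Hermitian Sum-of-Squares formalism, and that only weak duality for \eqref{eq:sos-primal}--\eqref{eq:sos-dual} is used here; this is what lets the argument avoid any symmetry hypothesis on the instances $\cA_d$, unlike the more delicate analysis behind Theorem~\ref{thm:rounding-actual}.
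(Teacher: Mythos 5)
Your proposal is correct and takes essentially the same route as the paper: the paper also combines Theorem~\ref{thm:lin-pol-const-asymp} for the true optimum with a lower bound $\textsc{OptSOS}_k(\cA) \ge \frac{1}{n+k-1}$ obtained by evaluating the relaxation against the uniform measure on the sphere, converting to a Gaussian integral, and invoking Lieb's permanent inequality for $\K=\C$ and Frenkel's result on products of squared linear forms for $\K=\R$ (Proposition~\ref{prop:sos-relax-lower-bound}). The only cosmetic difference is that you phrase the lower bound as weak duality via a feasible pseudoexpectation in \eqref{eq:sos-dual}, while the paper applies the pseudoexpectation operator to the primal certificate; these are the same argument.
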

To prove this result, we need the following bound on the Sum-of-Squares
relaxation:
\begin{proposition} \label{prop:sos-relax-lower-bound}
  Given any instance $\cA = (v_1 v_1^\dagger, \ldots, v_d v_d^\dagger)$, where
  $v_1, \ldots, v_d \in \K^n$ are unit vectors, then
  \begin{align*}
    \textsc{OptSoS}_k(\cA) \ge \frac{1}{n + k - 1}.
  \end{align*}
\end{proposition}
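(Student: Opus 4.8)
The plan is to lower-bound the relaxation by evaluating its dual at the single most symmetric feasible point, the uniform probability measure on the sphere, and to reduce the resulting estimate to one Gaussian moment computation plus a classical inequality. I would argue directly on the primal form \eqref{eq:sos-primal}; since the multipliers $\alpha_I$ only help, the same argument applies verbatim to $\srel_k$ in \eqref{eq:sos-relax-no-alpha}. Let $(\lambda,(\alpha_I))$ be any feasible point of \eqref{eq:sos-primal}. The sum-of-squares constraint forces $\lambda\norm{x}^{2k}\ge\binom{d}{k}^{-1}\sum_{I\in\mathcal{S}_k}\alpha_I\prod_{i\in I}\abs{\dotp{x,v_i}}^2$ for every $x$ with $\norm{x}=1$. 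Integrating this against the uniform probability measure $\sigma$ on the unit sphere of $\K^n$ (for which $\int\norm{x}^{2k}\,d\sigma=1$) gives $\lambda\ge\binom{d}{k}^{-1}\sum_{I}\alpha_I c_I$, where $c_I:=\Ex_{x\sim\sigma}\bracket{\prod_{i\in I}\abs{\dotp{x,v_i}}^2}$; applying the weighted AM--GM inequality with equal weights and using $\prod_I\alpha_I\ge 1$ then gives $\lambda\ge\prod_{I}c_I^{\,1/\binom{d}{k}}$. Thus it suffices to prove the uniform pointwise bound $c_I\ge(n+k-1)^{-k}$ for every $k$-subset $I$: this yields $\lambda\ge(n+k-1)^{-k}$, hence $\lambda^{1/k}\ge(n+k-1)^{-1}$, and minimizing over feasible points proves the claim.

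For the key estimate, fix $I$ and let $u_1,\dots,u_k$ denote the (unit) vectors $v_i$, $i\in I$. By homogeneity of $\prod_{j}\abs{\dotp{\cdot,u_j}}^2$, sampling $z\sim\knormal(0,I_n)$ and normalizing gives $c_I=\Ex_z\bracket{\prod_{j=1}^k\abs{\dotp{z,u_j}}^2}\big/\Ex_z\bracket{\norm{z}^{2k}}$. The denominator is a chi-square moment: it equals $\prod_{j=0}^{k-1}(n+2j)$ when $\K=\R$ and $\prod_{j=0}^{k-1}(n+j)$ when $\K=\C$, in either case a product of $k$ numbers whose arithmetic mean is at most $n+k-1$, hence at most $(n+k-1)^k$ by AM--GM. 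For the numerator, the entries $\dotp{z,u_j}$ form a real (resp.\ complex) centered Gaussian vector with covariance the Gram matrix $\Sigma\succeq 0$, whose diagonal entries are $\norm{u_j}^2=1$. When $\K=\C$ the numerator is $\per(\Sigma)$, and $\per(\Sigma)\ge\prod_j\Sigma_{jj}=1$ by Lieb's permanental inequality (iterating the Fischer-type bound that peels off one diagonal entry at a time). When $\K=\R$ the numerator is $\Ex_z\bracket{\prod_j\dotp{z,u_j}^2}\ge\prod_j\Ex_z\bracket{\dotp{z,u_j}^2}=\prod_j\norm{u_j}^2=1$, the Gaussian product inequality for squared linear forms due to Frenkel \cite{FrenkelMinkowskiinequalitysums2012}. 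Combining the two bounds gives $c_I\ge(n+k-1)^{-k}$, as needed.

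The one genuinely non-elementary ingredient is the numerator lower bound $\Ex_z\bracket{\prod_j\abs{\dotp{z,u_j}}^2}\ge\prod_j\norm{u_j}^2$ — i.e.\ the Gaussian product inequality with all exponents equal to $2$ — which I would invoke as a black box (Lieb's permanent inequality in the complex case, Frenkel's inequality in the real case). Everything else is routine: the ``integrate, then AM--GM'' reduction, and the elementary moment identities for $\norm{z}^{2k}$ and for products of squared jointly Gaussian variables. The only point requiring care is bookkeeping with the normalization of the primal/dual pair and with the combinatorial factors $\binom{d}{k}$ and $\binom{d-1}{k-1}$; I run the argument on the primal side precisely because only the single exponent $1/k$ appears there, avoiding the exponent $1/(d\binom{d-1}{k-1})$ of the dual objective \eqref{eq:sos-dual}.
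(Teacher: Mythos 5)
Your proof is correct and follows essentially the same route as the paper's: integrate the sum-of-squares certificate against the uniform measure on the sphere, convert the spherical moment to a ratio of Gaussian moments, lower-bound the numerator by $1$ via Lieb's permanent inequality for $\K=\C$ and Frenkel's Gaussian product inequality for $\K=\R$, and handle the denominator $\Ex\norm{z}^{2k}\le(n+k-1)^k$ by AM--GM. Two minor remarks: your explicit weighted AM--GM step absorbing the multipliers $\alpha_I$ is a welcome detail the paper glosses over, and the real-case inequality you invoke is from \cite{FrenkelPfaffianshafniansproducts2008}, not \cite{FrenkelMinkowskiinequalitysums2012}.
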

\begin{proof}
  The Sum-of-Squares algorithm produces a certificate in the form of the
  pseudo-expectation linear operator that satisfies:
  \begin{align*}
    \pEx \bracket{\lambda^d \norm{x}^{2d} -
    E_k(\abs{\dotp{x, v_1}}^2, \ldots, \abs{\dotp{x, v_d}}^2)} \ge 0
  \end{align*}
  We can obtain a lower bound on the optimal $\lambda^*$ by taking an
  expectation over a uniform distribution on the sphere instead. For the complex
  case, We can convert each term in the expectation to an integral over a
  complex Gaussian measure $d\mu_n(x)$:
  \begin{align*}
    \int_{S_\C^{n-1}} \prod_{i=1}^k \abs{\dotp{x, v_i}}^2 dx
    = \frac{(n-1)!}{(k+n-1)!} \int_{\C^n} \prod_{i=1}^k \abs{\dotp{x, v_i}}^2 d\mu_n(x)
  \end{align*}
  Then using the integral representation of the permanent, we can rewrite the integral as
  \begin{align*}
    \int_{\C^n} \prod_{i=1}^k \abs{\dotp{x, v_i}}^2 d\mu_n(x) = \per(V^\dagger V),
  \end{align*}
  where $v_i$ are the columns of $V$. Since $V^\dagger V$ is positive semidefinite and
  has 1 on its diagonal, by Lieb's theorem
  \cite{LIEBProofsConjecturesPermanents1966} its permanent is at least 1. Therefore
  \begin{align*}
    {\lambda^*}^d
    \ge \int_{S_\C^{n-1}} \srel_k(\abs{\dotp{x, v_1}}^2, \ldots, \abs{\dotp{x, v_d}}^2) dx
    \ge \binom{d}{k} \frac{(n-1)!}{(k+n-1)!}
    \ge \binom{d}{k} (n+k-1)^{-k}.
  \end{align*}
  Where the last inequality comes from applying AM/GM. Since
  $\textsc{OptSoS}_k(\cA) = \bracket{\lambda^*/\binom{d}{k}}^{1/k}$, we get the
  desired bound.

  For the real case, we can bound the integration on the sphere with the
  following result (Theorem 2.2 of \cite{FrenkelPfaffianshafniansproducts2008}):
  For any $v_1, \ldots, v_k \in \R^n$ with $\norm{v_i} = 1$, the average of
  $\prod_{i=1}^k \dotp{v_i, x}^2$ on the unit sphere
  $\braces{x \in \R^n \mid \norm{x} = 1}$ is at least
    \begin{align*}
      \frac{\Gamma(n/2)}{2^k \Gamma(n/2 + k)} = \frac{1}{n(n+2)(n+4)\cdots(n+2k-2)}
      \ge (n+k-1)^{-k}.
    \end{align*}
  This combined with the rest of the argument in the complex case gets us the
  desired bound.
\end{proof}
Using Proposition \ref{prop:sos-relax-lower-bound} and the same upper bound on
the value of $\textsc{Opt}(\cA)$ in the proof of Theorem \ref{thm:int-gap-SDP},
we prove Theorem \ref{thm:int-gap-SoS}.
\subsection{Product of Nonnegative Forms}
We can also apply the same technique to produce low-degree relaxations for
product of nonnegative forms. Given a product of homogeneous polynomials
$p_1(x), \cdots, p_d(x)$ each of degree $2\ell$, we can apply Maclaurin's
inequality if the polynomials are non-negative. Hence we can obtain relaxations
of the form $\textsc{OptSoS}_k$ similar to the optimization problem in
Definition \ref{def:optsos}, replacing $\dotp{x, A_ix}$ with $p_i(x)$. This
problem involves solving a degree $k\ell$ Sum-of-Squares relaxation.

\section{Hardness}
\label{sec:hardness}
In this section we investigate the hardness of computing
$\textsc{Opt}(\cA)$. When $d$ is fixed, a result of Barvinok (Theorem 3.4 in
\cite{BarvinokFeasibilitytestingsystems1993}) provides a polynomial-time
algorithm for computing \eqref{eq:opt-prod-psd}. However we shall prove that
this problem is hard when $d = \Omega(n)$.
\begin{theorem} \label{thm:hardness}
  There exists a constant $\epsilon > 0$ so that for all $d = \Omega(n)$, it is NP-hard to
  approximate $\textsc{Opt}(\cA)$ defined in \eqref{eq:opt-prod-psd} better than a
  factor of $(1-\epsilon)^{1/d}$.
\end{theorem}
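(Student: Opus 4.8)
The plan is a polynomial-time reduction from the gap version of \textsc{MaxCut}. By the PCP theorem, together with the known APX-hardness of \textsc{MaxCut} on bounded-degree graphs, there are absolute constants $0 < s < c \le 1$ such that it is NP-hard to decide, given a graph $G = (V,E)$ of bounded degree with $|V| = n$ and $|E| = m = \Theta(n)$, whether $\mathrm{maxcut}(G) \ge cm$ or $\mathrm{maxcut}(G) \le sm$. Given such a $G$, I would build an instance $\cA$ of \eqref{eq:opt-prod-psd} in $n$ real variables by including (i) a large polynomial multiplicity $N = \mathrm{poly}(n)$ of the PSD form $B_i(x) = n x_i^2$ for each vertex $i \in [n]$, and (ii) one copy of the PSD form $C_{ij}(x) = \|x\|^2 + \tfrac{n}{4}(x_i - x_j)^2$ for each edge $(i,j) \in E$. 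Then $d = nN + m = \mathrm{poly}(n) = \Omega(n)$, and on the unit sphere the objective $\textsc{Opt}(\cA)$ equals $P(x)^{1/d}$ maximized over $\|x\| = 1$, where $P(x) = \bigl(\prod_{i} n x_i^2\bigr)^{N} \prod_{(i,j) \in E}\bigl(1 + \tfrac{n}{4}(x_i - x_j)^2\bigr)$.

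Completeness is immediate: evaluating $P$ at $x = \sigma/\sqrt{n}$ for an optimal cut $\sigma \in \{\pm 1\}^n$ makes each $B_i$-factor equal $1$ and each edge factor equal $1$ (uncut) or $2$ (cut), so $P(\sigma/\sqrt n) = 2^{\mathrm{maxcut}(G)}$ and hence $\textsc{Opt}(\cA) \ge 2^{\mathrm{maxcut}(G)/d}$. The soundness direction --- showing $P(x) \le 2^{\mathrm{maxcut}(G)+1}$ for every $x$ on the sphere --- is where the work lies. Let $x^\star$ maximize $P$, write $t_i = n x_i^{\star 2}$ (so $\sum_i t_i = n$) and $g(t) = t - 1 - \log t \ge 0$. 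Using $P(x^\star) \ge 2^{\mathrm{maxcut}(G)}$ from completeness and the crude bound $\prod_{(i,j)}(1 + \tfrac n4 (x_i^\star - x_j^\star)^2) \le (1 + \tfrac n2)^m \le n^m$, one gets $\prod_i t_i^N \ge 2^{\mathrm{maxcut}(G)}/n^m$, and taking logarithms, $\sum_i g(t_i) = -\sum_i \log t_i \le m \log n / N =: \eta$. Since $g$ vanishes only at $t=1$, is quadratic there, and is bounded below away from $1$, $\eta$ small forces $|t_i - 1| = O(\sqrt{\eta})$ for every $i$; equivalently, with $\sigma^\star_i := \mathrm{sign}(x^\star_i)$, we have $\|x^\star - \sigma^\star/\sqrt n\|_\infty = O(\sqrt{\eta}/\sqrt n)$. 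Feeding this perturbation bound back into each edge factor shows that $\sigma^\star$-uncut edges contribute at most $1 + O(\eta)$ and $\sigma^\star$-cut edges at most $2(1 + O(\sqrt\eta))$, so together with $\prod_i t_i^N \le 1$ (by AM--GM) we get $P(x^\star) \le 2^{\mathrm{cut}_{\sigma^\star}(G)} (1 + O(\sqrt\eta))^{2m} \le 2^{\mathrm{maxcut}(G)}\, e^{O(m\sqrt\eta)}$. Choosing $N$ to be a large enough polynomial in $n$ (of order $m^3 \log n$) makes $\eta \to 0$ and $m\sqrt\eta \le \tfrac12\log 2$, yielding $P(x^\star) \le 2^{\mathrm{maxcut}(G)+1}$.

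Combining the bounds, $\textsc{Opt}(\cA) \in \bigl[\,2^{\mathrm{maxcut}(G)/d},\, 2^{(\mathrm{maxcut}(G)+1)/d}\,\bigr]$, so YES instances satisfy $\textsc{Opt}(\cA) \ge 2^{cm/d}$ while NO instances satisfy $\textsc{Opt}(\cA) \le 2^{(sm+1)/d}$; for $n$ large these separate, since $(c-s)m \to \infty$. Consequently, approximating $\textsc{Opt}(\cA)$ to within any factor exceeding $\bigl(2^{1 - (c-s)m}\bigr)^{1/d}$ would decide the \textsc{MaxCut} gap problem and is therefore NP-hard; since $2^{1-(c-s)m} \le \tfrac12$ for $n$ large, taking $\epsilon = \tfrac12$ (indeed any constant in $(0,1)$ works) gives the stated hardness of approximating better than $(1-\epsilon)^{1/d}$. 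Instances with larger $d = \Omega(n)$ are obtained by padding $\cA$ with extra copies of the form $\|x\|^2$, which leaves $\textsc{Opt}(\cA)$ unchanged. The main obstacle is the soundness step: making precise that the high-multiplicity balancing forms, at the price of only a polynomial blow-up of $d$, pin the optimizer near the Boolean cube so that the product is governed by an honest cut of $G$ --- concretely, balancing the multiplicity $N$, the resulting perturbation radius $O(\sqrt\eta)$, and the edge count $m$ in the final estimate.
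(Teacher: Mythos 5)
Your reduction is sound in its main thrust and shares the paper's central idea: raise the balancing forms $n x_i^2$ to a high power so that any near-optimal $x$ on the sphere is forced close to a vertex of the Boolean cube $\{\pm 1/\sqrt n\}^n$, then read off a cut of $G$ from that vertex. The differences are in how the cut is encoded and in the resulting parameters. The paper uses a \emph{single} quadratic factor $x^\dagger Q_G x$ with $Q_G$ a scaled Laplacian, so the cut value enters linearly and, crucially, the ``leakage'' from this factor is bounded by $\lambda_{\max}(Q_G) \le 2\,\textsc{MaxCut}(G)$ --- an absolute constant ratio. This lets the paper take the multiplicity $k$ of each $n x_i^2$ to be a \emph{constant}, giving hard instances with $d = nk+1 = \Theta(n)$. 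You instead use one factor $\|x\|^2 + \tfrac n4 (x_i-x_j)^2$ per edge, so the cut appears as $2^{\mathrm{cut}}$; the price is that your a priori bound $\prod_{(i,j)}(\cdots) \le n^m$ forces $\eta = m\log n/N$, and controlling $(1+O(\sqrt\eta))^{m}$ requires $N = \Omega(m^3\log n)$, hence $d = \widetilde\Theta(n^4)$.

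This parameter gap is the one substantive shortfall: the theorem asserts hardness for all $d = \Omega(n)$, and padding with $\|x\|^2$ only moves $d$ upward, so your construction establishes hardness only in the regime $d = \Omega(n^4\log n)$ and leaves the linear regime $d = \Theta(n)$ uncovered. To close it you would need to avoid the crude $n^m$ bound on the edge product --- e.g., by aggregating the cut into a single bounded form as the paper does, or by a sharper simultaneous analysis of the vertex and edge factors. The rest of your argument (completeness at $x=\sigma/\sqrt n$; the potential $g(t)=t-1-\log t$ with $\sum_i g(t_i) = -\sum_i \log t_i \le \eta$; the $O(\sqrt\eta)$ perturbation bound and its propagation into cut/uncut edge factors; the final YES/NO separation) is correct, and the gap you obtain, $2^{-\Omega(m)/d}$, comfortably exceeds the $(1-\epsilon)^{1/d}$ required.
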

This is obtained by a reduction from \textsc{MaxCut}. In our proof we will use a
result by \cite{BermanTighterInapproximabilityResults1998}, showing that
\textsc{MaxCut} for 3-regular graphs is NP-hard to approximate better than a
factor of $\frac{331}{332}$ (for general graphs this factor can be improved to
$\frac{16}{17}$ \cite{HastadOptimalInapproximabilityResults2001}).

Let $G$ be a $3$-regular graph with unit edge weights and adjacency matrix $A$.
The matrix $Q_G = \frac{1}{2}(I - \frac{1}{3}A)\succeq 0$ is a scaling of the
graph Laplacian so that
$$ \textsc{MaxCut}(G) = \max_{x \in \{\pm 1/\sqrt{n} \}^n} x^\dagger Q_G x.$$
Next let $\lambda_{\max}(Q_G)$ be the largest eigenvalue of $Q_G$. A
result in spectral graph theory (see \cite{TrevisanMaxCutSmallest2012} for
example) shows that:
\begin{align} \label{eq:maxcut-eigen-approx}
  \frac{1}{2}\lambda_{\max}(Q_G) \le \textsc{MaxCut}(G) \le \lambda_{\max}(Q_G) \le 1.
\end{align}
Let $p_G(x) = x^\dagger Q_Gx \prod_{i=1}^n \paren{n x_i^2}^k$ be a product of
$d = nk+1$ PSD forms. The following optimization problem is equivalent to an
instance of \eqref{eq:opt-prod-psd}, after taking the $\frac{1}{d}$-th power:
\begin{align*}
  \textsc{Opt}(G) \defn \max_{\norm{x}_2 = 1} p_G(x).
\end{align*}
It is easy to show that $\textsc{Opt}(G)$ is a relaxation of
$\textsc{MaxCut}(G)$, as the feasible set $\norm{x}_2 = 1$ includes the boolean
cube $\{\pm 1/\sqrt{n}\}^n$, and $\prod_{i=1}^n \paren{n x_i^2}^k = 1$ on this
cube.
\begin{proposition}
  For any graph $G$, $\textsc{MaxCut}(G) \le \textsc{Opt}(G)$.
\end{proposition}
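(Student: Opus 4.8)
The plan is to exploit the fact that the boolean cube $\{\pm 1/\sqrt n\}^n$ sits inside the feasible region $\{\|x\|_2 = 1\}$ of the optimization problem defining $\textsc{Opt}(G)$, and that the auxiliary factors $\prod_{i=1}^n (nx_i^2)^k$ were engineered precisely so that they evaluate to $1$ on that cube. First I would take an arbitrary $x \in \{\pm 1/\sqrt n\}^n$ and observe that $x_i^2 = 1/n$ for every coordinate, so $n x_i^2 = 1$ and hence $\prod_{i=1}^n (n x_i^2)^k = 1$. Consequently $p_G(x) = x^\dagger Q_G x$ for all such $x$.

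Next I would invoke the monotonicity of the maximum under enlarging the feasible set: since $\{\pm 1/\sqrt n\}^n \subseteq \{x : \|x\|_2 = 1\}$, we have
\begin{align*}
  \textsc{Opt}(G) = \max_{\|x\|_2 = 1} p_G(x) \ge \max_{x \in \{\pm 1/\sqrt n\}^n} p_G(x) = \max_{x \in \{\pm 1/\sqrt n\}^n} x^\dagger Q_G x = \textsc{MaxCut}(G),
\end{align*}
where the last equality is the stated identity $\textsc{MaxCut}(G) = \max_{x \in \{\pm 1/\sqrt n\}^n} x^\dagger Q_G x$. This chain of (in)equalities is the whole argument.

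There is essentially no obstacle here: the statement is an immediate ``relaxation'' observation, and the only thing to check carefully is the bookkeeping that each factor $(nx_i^2)^k$ is indeed identically $1$ on the cube (so that the degree-$2(nk+1)$ polynomial $p_G$ reduces exactly to the quadratic form there), together with the trivial fact that maximizing over a superset can only increase the optimal value. If anything warrants a remark, it is simply that $Q_G \succeq 0$ and each $nx_i^2$ is a PSD form, so $p_G$ is genuinely an instance of a product of nonnegative quadratic forms with $d = nk+1$, matching the setup of \eqref{eq:opt-prod-psd}; but this does not enter the inequality itself.
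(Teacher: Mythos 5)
Your proof is correct and matches the paper's argument exactly: the paper likewise observes that the sphere contains the boolean cube $\{\pm 1/\sqrt{n}\}^n$ and that $\prod_{i=1}^n (n x_i^2)^k = 1$ there, so $p_G$ restricts to the quadratic form and the maximum over the larger set dominates $\textsc{MaxCut}(G)$. Nothing is missing.
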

Next we claim that for all $\hat{x}$ on the sphere sufficiently far away from
the vertices of the boolean hypercube, the value of $p_G(x)$ is upper bounded by
$\textsc{MaxCut}(G)$, thus allowing us to restrict the feasible region to all
vectors $x$ that are close to a vertex of the hypercube.

\begin{figure}[ht]
  \centering
  \def\svgwidth{\columnwidth/3}
\begingroup%
  \makeatletter%
  \providecommand\color[2][]{%
    \errmessage{(Inkscape) Color is used for the text in Inkscape, but the package 'color.sty' is not loaded}%
    \renewcommand\color[2][]{}%
  }%
  \providecommand\transparent[1]{%
    \errmessage{(Inkscape) Transparency is used (non-zero) for the text in Inkscape, but the package 'transparent.sty' is not loaded}%
    \renewcommand\transparent[1]{}%
  }%
  \providecommand\rotatebox[2]{#2}%
  \newcommand*\fsize{\dimexpr\f@size pt\relax}%
  \newcommand*\lineheight[1]{\fontsize{\fsize}{#1\fsize}\selectfont}%
  \ifx\svgwidth\undefined%
    \setlength{\unitlength}{242.89460911bp}%
    \ifx\svgscale\undefined%
      \relax%
    \else%
      \setlength{\unitlength}{\unitlength * \real{\svgscale}}%
    \fi%
  \else%
    \setlength{\unitlength}{\svgwidth}%
  \fi%
  \global\let\svgwidth\undefined%
  \global\let\svgscale\undefined%
  \makeatother%
  \begin{picture}(1,1)%
    \lineheight{1}%
    \setlength\tabcolsep{0pt}%
    \put(0,0){\includegraphics[width=\unitlength,page=1]{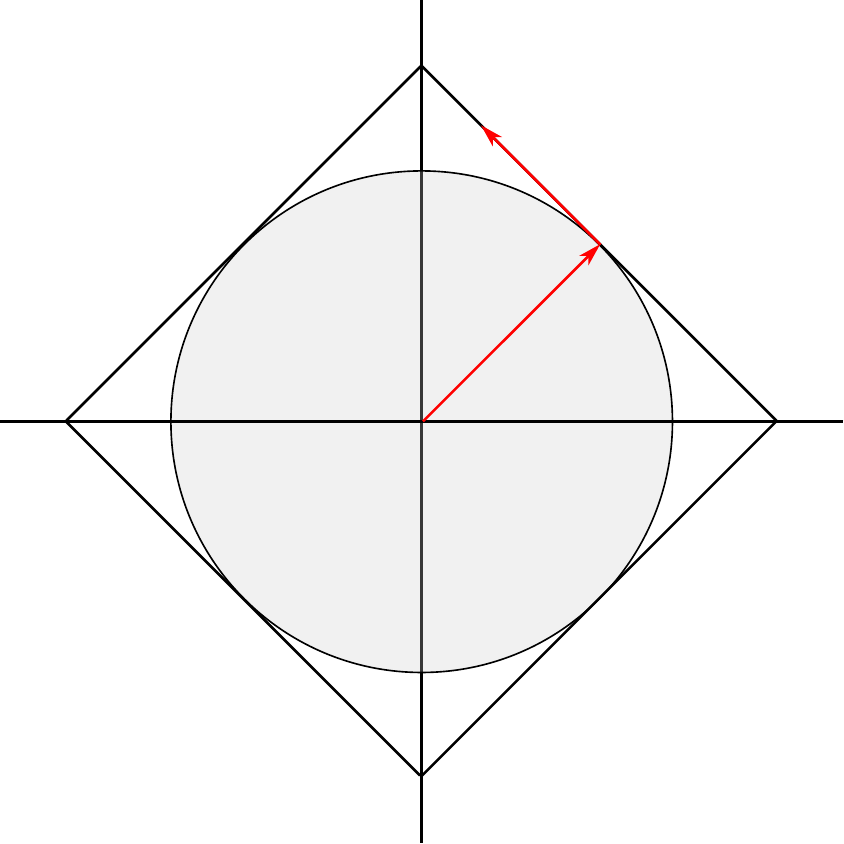}}%
    \put(0.60112184,0.8524902){\color[rgb]{1,0,0}\makebox(0,0)[lt]{\smash{\begin{tabular}[t]{l}$\Delta$\end{tabular}}}}%
    \put(0.72624745,0.72843411){\color[rgb]{1,0,0}\makebox(0,0)[lt]{\smash{\begin{tabular}[t]{l}$y = \frac{\mathbf{1}}{\sqrt{n}}$\end{tabular}}}}%
    \put(0,0){\includegraphics[width=\unitlength,page=2]{hardness_proof_src.pdf}}%
    \put(0.55347008,0.64339941){\color[rgb]{1,0,0}\makebox(0,0)[lt]{\smash{\begin{tabular}[t]{l}$x$\\\end{tabular}}}}%
  \end{picture}%
\endgroup%

  \caption{\label{fig:hardness} Illustration of the parameterization of the
    sphere we use in the proof of Proposition \ref{prop:hardness-restrict}.  }
\end{figure}
\begin{proposition} \label{prop:hardness-restrict} For any
  $\frac{2 \log 2}{k} \le \delta < n$, let $\eta =
  \frac{n}{\sqrt{n+\delta}}$. If $\norm{x}_2 = 1$ and $\norm{x}_1 \le \eta$,
  then $p_G(x) \le \textsc{MaxCut}(G) \le \textsc{Opt}(G)$. Letting
  $\mathcal{T}_{\delta} = \{ x\in \R^n \mid \norm{x}_2 = 1,\, \norm{x}_1 \ge
  \eta\}$, then $\textsc{Opt}(G) = \max_{x \in \mathcal{T}_\delta} p_G(x)$.
\end{proposition}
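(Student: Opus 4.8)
The plan is to show that the monomial factor $\prod_{i=1}^n (n x_i^2)^k$ of $p_G(x)$ becomes exponentially small as soon as $\norm{x}_1$ is bounded away from its maximal possible value $\sqrt{n}$, and that this decay is strong enough to push $p_G(x)$ below $\textsc{MaxCut}(G)$.

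First I would bound the two factors of $p_G(x)$ separately. Since $\norm{x}_2 = 1$ and $Q_G$ is symmetric PSD, $x^\dagger Q_G x \le \lambda_{\max}(Q_G)$, and by the left inequality in \eqref{eq:maxcut-eigen-approx} this is at most $2\,\textsc{MaxCut}(G)$. For the monomial factor, apply AM/GM to $\abs{x_1},\ldots,\abs{x_n}$: $\prod_i x_i^2 = \paren{\prod_i \abs{x_i}}^2 \le \paren{\norm{x}_1/n}^{2n}$, hence $\prod_i (n x_i^2)^k \le \norm{x}_1^{2nk}/n^{nk}$. Using the hypothesis $\norm{x}_1 \le \eta$ together with $\eta^2 = n^2/(n+\delta)$, this is at most $(\eta^2/n)^{nk} = (1+\delta/n)^{-nk}$. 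Combining the two estimates, $p_G(x) \le 2\,\textsc{MaxCut}(G)\,(1+\delta/n)^{-nk}$.

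Next I would check that the hypothesis $\delta \ge 2\log 2/k$ forces $(1+\delta/n)^{nk} \ge 2$. Since $0 < \delta < n$ we have $\delta/n \in (0,1)$, on which the elementary bound $\log(1+t) \ge t/2$ holds; therefore $nk\log(1+\delta/n) \ge nk\cdot\frac{\delta}{2n} = \frac{k\delta}{2} \ge \log 2$, and exponentiating gives $(1+\delta/n)^{-nk} \le 1/2$. Plugging this into the previous display yields $p_G(x) \le \textsc{MaxCut}(G)$, and the full chain $p_G(x) \le \textsc{MaxCut}(G) \le \textsc{Opt}(G)$ follows from the preceding proposition. For the final claim, note the sphere is the disjoint union of $\mathcal{T}_\delta$ and $\{\norm{x}_1 < \eta\}$, on the latter of which $p_G \le \textsc{MaxCut}(G)$ by what we just proved; on the other hand any optimal cut vector $x_0 \in \{\pm 1/\sqrt{n}\}^n$ satisfies $\norm{x_0}_1 = \sqrt{n} \ge \eta$ so $x_0 \in \mathcal{T}_\delta$, and $p_G(x_0) = x_0^\dagger Q_G x_0 = \textsc{MaxCut}(G)$. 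Hence the maximum of $p_G$ over $\mathcal{T}_\delta$ already equals at least $\textsc{MaxCut}(G)$, which dominates the supremum over the complement, so $\textsc{Opt}(G) = \max_{x \in \mathcal{T}_\delta} p_G(x)$.

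The argument is short and essentially routine; there is no serious obstacle, only a modeling choice and some bookkeeping. The choice is to exploit the constraint through AM/GM on $\abs{x_i}$ rather than through a geometric parameterization of the spherical cap around $\mathbf{1}/\sqrt{n}$ (the picture in Figure \ref{fig:hardness}). The bookkeeping is to track constants so that the factor-of-$2$ slack between $\lambda_{\max}(Q_G)$ and $2\,\textsc{MaxCut}(G)$ is exactly absorbed by $(1+\delta/n)^{nk}\ge 2$; this is precisely what pins down the threshold $\delta \ge 2\log 2/k$, and the one inequality to be slightly careful with is $\log(1+t)\ge t/2$ on $(0,1)$.
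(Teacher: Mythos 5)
Your proof is correct, and the core estimates are the same as the paper's: bound the quadratic factor by $\lambda_{\max}(Q_G) \le 2\,\textsc{MaxCut}(G)$, bound the monomial factor by AM/GM to get $(1+\delta/n)^{-nk}$, and use $\log(1+t) \ge t/2$ on $[0,1]$ (equivalently $(1+\delta/n)^{-n} \le e^{-\delta/2}$) so that $\delta \ge \frac{2\log 2}{k}$ absorbs the factor of $2$. The one genuine difference is organizational but worth noting: the paper parameterizes each spherical cap as $x = (y+\Delta)/\norm{y+\Delta}$ around a hypercube vertex $y$, applies AM/GM to the coordinates of $\mathbf{1}/\sqrt{n}+\Delta$ using $\norm{\mathbf{1}/\sqrt{n}+\Delta}_1 = \sqrt{n}$, and then repeats the argument orthant by orthant, whereas you work directly with the hypothesis $\norm{x}_1 \le \eta$ via $\prod_i \abs{x_i} \le (\norm{x}_1/n)^n$. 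Your route handles all of the complement of $\mathcal{T}_\delta$ in one step, matches the statement of the proposition (which is phrased in terms of $\norm{x}_1$) more faithfully, and avoids the direction-of-inequality slip in the paper's write-up (which at one point asserts the bound for $\norm{x}_1 \ge \eta$ when it should be $\le \eta$). Your closing argument for $\textsc{Opt}(G) = \max_{x \in \mathcal{T}_\delta} p_G(x)$ --- that the cut vectors themselves lie in $\mathcal{T}_\delta$ and already achieve $\textsc{MaxCut}(G)$ --- is also spelled out more completely than in the paper.
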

\begin{proof}
  We can write any $x$ on the sphere $\norm{x}_2 = 1$ as
  $x = (y + \Delta)/\norm{(y + \Delta)}$, where $y \in \{\pm 1/\sqrt{n}\}^n$ and
  $\Delta$ is orthogonal to $y$ (see Figure \ref{fig:hardness}). Let
  $y = \mathbf{1}/\sqrt{n}$ without loss of generality and
  $\norm{\Delta}_2^2 = \delta/n$.  Then any $x$ in the intersection of the
  sphere and non-negative orthant can be written as
  $$x = \sqrt{\frac{n}{n + \delta}}(\mathbf{1}/\sqrt{n} + \Delta)
  = \frac{\mathbf{1}/\sqrt{n} + \Delta}{\sqrt{1 + \delta/n}},$$ for some
  $\Delta$ where $\norm{\mathbf{1}/\sqrt{n} + \Delta}_1 = \sqrt{n}$ and
  $\delta \le n$. By construction, $\norm{x}_1 = \eta$. Next we bound the
  product
  \begin{align*}
    \prod_{i=1}^n \paren{n x_i^2}^k
    &= (1+\delta/n)^{-nk} n^{nk} \prod_{i=1}^n \abs{1/\sqrt{n} + \Delta_i}^{2k} \\
    &\le (1+\delta/n)^{-nk} n^{nk} \paren{\frac{1}{n}\sum_{i=1}^n \abs{1/\sqrt{n}+\Delta_i}}^{2nk} \\
    &= (1+\delta/n)^{-nk} \\
    &\le e^{-k\delta/2},
  \end{align*}
  where we have used the AM/GM inequality, the fact that
  $\norm{\mathbf{1}/\sqrt{n} + \Delta}_1 = \sqrt{n}$ and
  $(1+x/n)^{-n} \le e^{-x/2}$ for $0 \le x \le n$. Since
  $x^\dagger Q_G x \le \lambda_{\max}(Q_G) \le 2\textsc{MaxCut}(G)$, if
  $\delta \ge \frac{2 \log 2}{k}$, then for all $x$ in the nonnegative orthant
  where $\norm{x}_2 = 1$ and
  $\norm{x}_1 \ge \eta = \frac{n}{\sqrt{n + \delta}}$,
  $p_G(x) \le \textsc{MaxCut}(G)$. We can then repeat this argument for all
  other vertices of the hypercube. Geometrically $\mathcal{T}_\delta$ is defined
  as the union of spherical caps centered around the vertices of the hypercube
  $\{\pm 1/\sqrt{n} \}^n$. Thus for any $x \not \in \mathcal{T}_\delta$,
  $p_G(x) \le \textsc{MaxCut}(G)$ and we can restrict the optimization problem
  to $\mathcal{T}_\delta$.
\end{proof}

This restriction of the feasible set allows us to find an upper bound on
$\textsc{Opt}(G)$.
\begin{proposition}
  There exists a universal constant $C$ such that for all $k \ge C$,
  $\textsc{Opt}(G) < \frac{332}{331} \textsc{MaxCut}(G)$.
\end{proposition}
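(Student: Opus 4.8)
The plan is to combine Proposition \ref{prop:hardness-restrict} with the observation that, on the restricted feasible set $\mathcal{T}_\delta$, the only factor of $p_G$ able to push the objective above $\textsc{MaxCut}(G)$ is the quadratic form $x^\dagger Q_G x$, which is Lipschitz and hence stays close to its value at the nearest hypercube vertex, while the remaining factor $\prod_{i=1}^n (n x_i^2)^k$ never exceeds $1$. Concretely, fix $k \ge 1$ and set $\delta = \frac{2\log 2}{k}$; since every $3$-regular graph has $n \ge 4 > 2\log 2$, this meets the hypothesis $\frac{2\log 2}{k} \le \delta < n$ of Proposition \ref{prop:hardness-restrict}, so $\textsc{Opt}(G) = \max_{x \in \mathcal{T}_\delta} p_G(x)$ while $p_G(x) \le \textsc{MaxCut}(G)$ for $x \notin \mathcal{T}_\delta$. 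It therefore suffices to bound $p_G$ on $\mathcal{T}_\delta$.

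First I would control how far a point of $\mathcal{T}_\delta$ can be from the hypercube. Given $x$ with $\norm{x}_2 = 1$ and $\norm{x}_1 \ge \eta = \frac{n}{\sqrt{n+\delta}}$, let $y \in \{\pm 1/\sqrt n\}^n$ be a nearest hypercube vertex, so $y_i x_i = |x_i|/\sqrt n$, $\dotp{x,y} = \norm{x}_1/\sqrt n$, and $\norm{y}_2 = 1$; then
\begin{align*}
  \norm{x-y}_2^2 = 2 - \frac{2}{\sqrt n}\norm{x}_1 \le 2\paren{1 - \sqrt{\frac{n}{n+\delta}}} \le \frac{\delta}{n} = \frac{2\log 2}{nk},
\end{align*}
using $1 - (1+u)^{-1/2} \le u/2$. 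Writing $t = \norm{x-y}_2 \le \sqrt{2\log 2/(nk)}$, and using that $Q_G \succeq 0$ with $\lambda_{\max}(Q_G) \le 1$ by \eqref{eq:maxcut-eigen-approx} (so $x \mapsto x^\dagger Q_G x$ is $2$-Lipschitz on the unit ball) together with $y^\dagger Q_G y \le \textsc{MaxCut}(G)$ since $y$ is a cut vector, we get $x^\dagger Q_G x \le \textsc{MaxCut}(G) + 2t$. Combining with $\prod_{i=1}^n (n x_i^2)^k \le 1$ (AM/GM) yields $p_G(x) \le \textsc{MaxCut}(G) + 2\sqrt{2\log 2/(nk)}$ for every $x \in \mathcal{T}_\delta$.

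Finally, a uniformly random bipartition of a $3$-regular graph cuts at least half its edges, so $\textsc{MaxCut}(G) \ge 1/2$ in this normalization. Hence, using also $n \ge 4$, once $k$ exceeds a fixed universal constant $C$ the additive error $2\sqrt{2\log 2/(nk)}$ falls below $\frac{1}{331}\cdot\frac12 \le \frac{1}{331}\textsc{MaxCut}(G)$, so $p_G(x) < \frac{332}{331}\textsc{MaxCut}(G)$ throughout $\mathcal{T}_\delta$ and therefore $\textsc{Opt}(G) < \frac{332}{331}\textsc{MaxCut}(G)$. (A sharper version, retaining $\prod_{i=1}^n(n x_i^2)^k \le e^{-nk t^2}$ and maximizing $(\textsc{MaxCut}(G)+2t)e^{-nk t^2}$ over $t \ge 0$, shrinks the additive error to $O(1/(nk))$ and hence $C$ to a small explicit constant; either version suffices.) There is no deep obstacle; the one point to watch is that the Lipschitz slack $2t$ is of order $1/\sqrt{nk}$ rather than $O(1/k)$, and the argument goes through only because even this is negligible against the $\Omega(1)$ lower bound on $\textsc{MaxCut}(G)$ — which is precisely where the spherical caps of radius $O(1/\sqrt k)$ making up $\mathcal{T}_\delta$, and hence the hypothesis $k \ge C$, enter.
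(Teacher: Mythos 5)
Your proof is correct and follows essentially the same route as the paper: restrict to $\mathcal{T}_\delta$ via Proposition \ref{prop:hardness-restrict}, observe that the monomial factor is at most $1$, and show that over a spherical cap of radius $O(\sqrt{\delta/n})$ around a cut vector the quadratic form exceeds $\textsc{MaxCut}(G)$ by only $O(\sqrt{\delta/n})$. The only cosmetic difference is that the paper bounds the perturbation multiplicatively via $\bigl(\sqrt{y^\dagger Q_G y}+\sqrt{\Delta^\dagger Q_G\Delta}\bigr)^2$ together with $\lambda_{\max}(Q_G)\le 2\,\textsc{MaxCut}(G)$, whereas you use an additive Lipschitz bound and then invoke $\textsc{MaxCut}(G)\ge 1/2$ to convert it to a multiplicative one; both give the same $1+O(1/\sqrt{nk})$ factor.
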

\begin{proof}
  Any $\hat x \in \mathcal{T}_\delta$ can be written as
  $\hat{x} = (y + \Delta)/\sqrt{1 + \delta/n}$, where $y \in \{\pm 1 /\sqrt{n}\}^n$,
  $\norm{\Delta}^2 \le \delta/n$ and $\dotp{\Delta, y} = 0$. Then
  \begin{align*}
    \hat{x}^\dagger Q_G \hat{x} \le (y + \Delta)^\dagger Q_G (y + \Delta)
    & \le \paren{\sqrt{\textsc{MaxCut}(G)} + \sqrt{\delta \lambda_{\max}(Q_G)/n}}^2 \\
    & \le \paren{\sqrt{\textsc{MaxCut}(G)} + \sqrt{\textsc{MaxCut}(G) 2\delta /n}}^2 \\
    & \le \textsc{MaxCut}(G) \paren{1 + \sqrt{2\delta/n}}^2
  \end{align*}
  where we used the bound in \eqref{eq:maxcut-eigen-approx}. We get the desired
  bound by choosing a large enough constant $k$ so that
  $\delta = \frac{2\log 2}{k}$ and $(1 + \sqrt{2\delta/n})^2 < \frac{332}{331}$
  for all $n$.
\end{proof}
This shows us that for a constant $k$, if we can find an algorithm that solves
$\textsc{Opt}(G)$, then we can also approximate $\textsc{MaxCut}(G)$ to within a
factor of $\frac{331}{332}$. However
\cite{BermanTighterInapproximabilityResults1998} showed that this is not
possible unless $P=NP$, thus completing the proof of Theorem \ref{thm:hardness}.

\section{Conclusion}
In this paper we studied the problem of maximizing the product of non-negative
forms over the sphere. Even though the objective is a high degree dense
polynomial on the sphere, we leveraged its compact representation as a product
of low degree polynomials formulate a series of computationally efficient
relaxations. We then provided bounds on the quality of these relaxations and
showed that they are much better than known bounds for approximating general
polynomial optimization.

A few intriguing questions remain. Although we showed a partial order for the
values of relaxations in Section \ref{sec:higher-degree-derivation}, it remains
to prove that the values of $\textsc{OptSOS}_k(\cA)$ are monotone for increasing
values of $k$. Numerical experiments suggest that this is the case. Another open
problem is to extend the analysis of the performance ratio of the Sum-of-Squares
relaxation in section \ref{sec:rounding-alg} to find a bound on its
approximation ratio. Answering these questions may require proving identities
involving products of pseudoexpectations.

The main tools in formulating the low degree relaxations in this paper are
algebraic identities such as the AM/GM and Maclaurin's inequalities, that bounds
the objective and at the same time reduces the polynomial's degree. This idea
may also be applied to other optimization problems with compact representation.

\bibliographystyle{amsalpha}
\bibliography{../MyLibraryTEX}

\appendix

\section{Expected Log of Generalized Chi-squared Distribution}
\label{sec:expected-log-chi-squared}
Given $\lambda_1, \ldots, \lambda_n > 0$ and let $z_i \sim \cnormal(0, 1)$ be
i.i.d. complex Gaussians, we wish to find:
\begin{align*}
  \Ex\bracket{\log \paren{\sum_i \lambda_i \abs{z_i}^2}}
\end{align*}
Using equation (11) from \cite{GaoDeterminantRepresentationDistribution2000}, we
know that the density of the random variable
$Z = \sum_i \lambda_i \abs{z_i}^2$ is:
\begin{align*}
  f(z) = (-1)^{n-1} \sum_{i=1}^n \frac{\lambda_i^{n-2} \exp(-z/\lambda_i)}
                                      {\prod_{j \ne i} (\lambda_j - \lambda_i)}
\end{align*}
Suppose $\lambda_i$ are distinct, using the integral
$\int_0^\infty \log(z) \exp(-z/\lambda_i) = \lambda_i(-\gamma + \log
\lambda_i)$, we get:
\begin{align*}
  \Ex[\log(Z)] &= (-1)^{n-1} \sum_{i=1}^n \frac{\lambda_i^{n-1} (-\gamma + \log \lambda_i)}
                 {\prod_{j \ne i} (\lambda_j - \lambda_i)} \\
               &= -\gamma + (-1)^{n-1} \sum_{i=1}^n \frac{\lambda_i^{n-1} \log \lambda_i}
                 {\prod_{j \ne i} (\lambda_j - \lambda_i)}.
\end{align*}
The identity in the last step can be proved using different representations of
the determinant of a Vandermonde matrix. The sum can be represented as a ratio
of determinants. Let
\begin{align*}
  V = \bmat{1 & \lambda_1 & \cdots & \lambda_1^{n-1} \\
            1 & \lambda_2 & \cdots & \lambda_2^{n-1} \\
            \vdots & \vdots & \ddots & \vdots \\
            1 & \lambda_n & \cdots & \lambda_n^{n-1}} \quad \text{and} \quad
  \bar{V} = \bmat{1 & \lambda_1 & \cdots & \lambda_1^{n-1} \log \lambda_1 \\
                  1 & \lambda_2 & \cdots & \lambda_2^{n-1} \log \lambda_2 \\
                  \vdots & \vdots & \ddots & \vdots \\
                  1 & \lambda_n & \cdots & \lambda_n^{n-1} \log \lambda_n}.
\end{align*}
Then
\begin{align*}
  \Ex[\log(Z)] = -\gamma + \frac{\det(\bar{V})}{\det(V)}.
\end{align*}

Now suppose some of the $\lambda_i$ are repeated, then we can determine the pdf
of $Z$ using results from Section II of
\cite{BjornsonExploitingQuantizedChannel2009}. In particular, if
$\lambda_1 = \lambda$ and $\lambda_2, \ldots, \lambda_n = \eps$, then
\begin{align*}
  f(z) = \frac{1}{\lambda\eps^{n-1}}\paren{
  \frac{e^{-z/\lambda}}{\paren{1/\eps-1/\lambda}^{n-1}} +
  \sum_{\ell=1}^{n-1} \frac{(-1)^{\ell+1}x^{n-1-\ell} e^{-x/\eps}}
  {(n-1-\ell)!(1/\lambda - 1/\eps)^\ell}}.
\end{align*}
Using the integral (where $b\ge 1$ and $a > 0$)
\begin{align*}
  \int_0^\infty x^{b-1} e^{-x/a} \log x \, dx  = a^b(b-1)!(\log(a) + \psi(b)),
\end{align*}
we can derive a closed form expression for $\Ex[\log(Z)]$:
\begin{align*}
  \Ex[\log(Z)] &= \frac{1}{\lambda\eps^{n-1}}\paren{
  \frac{-\gamma + \log \lambda}{\paren{1/\eps-1/\lambda}^{n-1}} +
  \sum_{\ell=1}^{n-1} \frac{(-1)^{\ell+1} \eps^{n-\ell} (\log \eps + \psi(n-\ell))}
                           {(1/\lambda - 1/\eps)^\ell}} \\
  &= \frac{\lambda^{n-1}(-\gamma + \log \lambda)}{(\lambda - \eps)^{n-1}} -
    \sum_{\ell=1}^{n-1} \frac{\eps \lambda^{\ell-1}(\log \eps + \psi(n-\ell))}{(\lambda - \eps)^\ell}.
\end{align*}

\section{Proof of Proposition \ref{prop:max-monomial-sphere}}
\label{sec:prop-monomial-proof}
From \cite{FollandHowIntegratePolynomial2001} we know that given the monomial
$x^\beta = \prod_{i=1}^n x_i^{\beta_i}$, its integral over the real sphere
$\mathcal{S}^{n-1}$ can be computed as follows:
\begin{align*}
  \int_{\mathcal{S}^{n-1}} x^{\beta} \,dx =
  \frac{2 \Gamma(\gamma_1) \cdots \Gamma(\gamma_n)}{\Gamma(\gamma_1 + \cdots + \gamma_n)},
\end{align*}
where $\gamma_i = \frac{1}{2}(\beta_i + 1)$. Next let $d = \sum_i \beta_i$
and $k \ge 1$ be an integer. We use Stirling's approximation and take the
limit
\begin{align*}
  \max_{\norm{x} = 1} x^{2\beta} = \lim_{k \rightarrow \infty}
  \paren{\int_{\mathcal{S}^{n-1}} x^{2k\beta} dx}^{1/k}
  &= \lim_{k \rightarrow \infty}
    \pfrac{2 \prod_{i=1}^n \Gamma(k\beta_1 + 1/2)}{\Gamma(kd + n/2)}^{1/k} \\
  &= \lim_{k \rightarrow \infty} \frac{\prod_{i=1}^n (k\beta_i -1/2)^{\beta_i}}{(kd + n/2-1)^d} \\
  &= \frac{\prod_{i=1}^n {\beta_i}^{\beta_i}}{d^d}.
\end{align*}

\end{document}